\numberwithin{equation}{section}
\theoremstyle{plain}
\newtheorem{theorem}{Theorem}[section]
\newtheorem*{theorem*}{Theorem}
\newtheorem{lemma}[theorem]{Lemma} 
\newtheorem{corollary}[theorem]{Corollary} 
\theoremstyle{remark}
\newtheorem{remark}[theorem]{Remark}  
\newcommand{\e}{^\varepsilon}
\newcommand{\eps}{{\varepsilon}}
\newcommand{\al}{\alpha}
\newcommand{\be}{\beta}
\renewcommand{\d}{\mathrm{d}}
\newcommand{\cupl}{\bigcup\limits}
\newcommand{\suml}{\sum\limits}
\newcommand{\liml}{\lim\limits}
\newcommand{\dist}{\mathrm{dist}}
\newcommand{\dom}{\mathrm{dom}}
\newcommand{\N}{\mathbb{N}}
\newcommand{\R}{\mathbb{R}}
\renewcommand{\S}{\mathfrak{S}}
\renewcommand{\L}{\mathsf{L}^2} 
\renewcommand{\H}{\mathsf{H}}
\newcommand{\HS}{\mathcal{H}}
\newcommand{\Ak}{\mathbf{A}} 
\newcommand{\ak}{\mathbf{a}} 
\renewcommand{\a}{\mathfrak{a}} 
\renewcommand{\u}{\mathbf{u}}
\newcommand{\vv}{\mathbf{v}}
\newcommand{\ess}{\mathrm{ess}}
\newcommand{\disc}{\mathrm{disc}}
\newcommand{\acc}{\mathfrak{acc}}
\newcommand{\I}{\mathcal{I}}
\newcommand{\Id}{\mathrm{I}}
\newcommand{\dC}{{\mathbb{C}}}
\newcommand{\dR}{{\mathbb{R}}}
\newcommand{\dN}{{\mathbb{N}}}
\newcommand{\cG}{{\mathcal G}}
\newcommand{\cH}{{\mathcal H}}
\DeclareMathOperator{\ran}{ran}
\newcommand{\ds}{\displaystyle}
\begin{document}

\title[Differential operators with prescribed spectrum]{Construction of self-adjoint differential operators with prescribed spectral properties}

\author[J. Behrndt]{Jussi Behrndt$^1$}
\email{behrndt@tugraz.at}
\author[A. Khrabustovskyi]{Andrii Khrabustovskyi$^{1,2}$}
\email{khrabustovskyi@math.tugraz.at}

\address{$^1$ Institute of Applied Mathematics, Graz University of Technology, Steyrergasse
30, 8010 Graz, Austria}    
\address{$^2$ Department of Physics, Faculty of Science, University of Hradec Kr\'alov\'e, Rokitansk\'eho 62,	500 03 Hradec Kr\'alov\'e, Czech Republic}

\keywords{Differential operator, Schr\"{o}dinger operator, essential spectrum, discrete spectrum, Neumann Laplacian, singular potential, boundary condition}

\maketitle

\begin{center}
\textit{Dedicated to the memory of our friend and colleague Hagen Neidhardt} 
\end{center}

\begin{abstract}
In this expository article some spectral properties of self-adjoint differential operators 
are investigated. The main objective is to illustrate and (partly) review how one can construct domains or potentials such that the 
essential or discrete spectrum of
a Schr\"{o}dinger operator of a certain type (e.g. the Neumann Laplacian) coincides with a predefined subset of the real line.
Another aim is to emphasize that the spectrum of a differential operator on a bounded domain or bounded interval is not necessarily discrete, that is, 
eigenvalues of infinite multiplicity, continuous spectrum, and eigenvalues embedded in the continuous spectrum may be present. This {\it unusual} 
spectral effect is, very roughly speaking, 
caused by (at least) one of the following three reasons: The bounded domain has a rough boundary, the potential is singular, or the boundary condition
is nonstandard. In three separate explicit constructions we demonstrate how each of these 
possibilities leads to a Schr\"{o}dinger operator with prescribed essential spectrum. 
\end{abstract}

\section{Introduction}

This paper is concerned with spectral theory of self-adjoint differential operators in Hilbert spaces. Before we explain in more detail 
the topics and results we briefly familiarize the reader with the notions of discrete spectrum and essential spectrum, that play a key role here. 
Let $A$ be a (typically unbounded) 
self-adjoint operator in an infinite dimensional complex Hilbert space $\HS$, see also the beginning of Section~\ref{sec3} 
for more details on the {\it adjoint} of unbounded operators and the notion {\it self-adjoint}. The {\it spectrum} $\sigma(A)$ of $A$ is a closed subset of the real line 
(which is unbounded if and only if $A$ is unbounded) that consists of all those points $\lambda$ such that $A-\lambda$ does not admit 
a bounded inverse. In the case that $A-\lambda$ is not invertible $\lambda$ is called an {\it eigenvalue} of $A$ and belongs to the {\it point spectrum};
in the case that $(A-\lambda)^{-1}$ exists as an unbounded operator the point $\lambda$ belongs to the {\it continuous spectrum}. An eigenvalue is
{\it discrete} if it is an isolated point in $\sigma(A)$ and the {\it eigenspace} $\ker(A-\lambda)$ is finite dimensional. This subset of the spectrum of $A$ is denoted
by $\sigma_\disc(A)$; the complement of the discrete spectrum in $\sigma(A)$ is called the {\it essential spectrum} of $A$ and the notation $\sigma_\ess(A)$
is used for this set. It is clear that
$$
\sigma(A)=\sigma_\disc(A)\,\dot\cup\,\sigma_\ess(A)
$$
and that $\sigma_\ess(A)$ consists of all those spectral points which are in the continuous spectrum, all 
eigenvalues embedded in the continuous spectrum
and all isolated eigenvalues of infinite multiplicity. For the intuition it may be helpful to keep in mind that essential spectrum can only appear in an infinite
dimensional Hilbert space, whereas the spectrum of any matrix is necessarily discrete and hence is always present (and the only type of spectrum) 
of self-adjoint operators in finite dimensional Hilbert spaces. We refer the reader to the monographs \cite{AG93,BS87,D95,K66,RS72,RS78,Sch12} for more details on the spectrum of
self-adjoint operators.

The main objective of this expository paper is to illustrate and (partly) review how one can explicitely construct rough domains, 
singular potentials, or nonstandard boundary conditions
such that the essential spectrum of
a Schr\"{o}dinger operator coincides with a predefined subset of the real line. 
The closely connected problem to construct Schr\"{o}dinger operators
with predefined discrete spectrum is also briefly discussed. Very roughly speaking, 
the results in Section~\ref{sec1} are contained in the well-known papers \cite{A78,CdV87,HSS91,HKP97}, whereas the main results 
Theorem~\ref{th-BK+} and Theorem~\ref{essit} 
in the later sections seem to be new. 

More precisely, in Section~\ref{sec1} we treat Laplace operators subject to Neumann boundary conditions (Neumann Laplacians) on bounded domains.
It is often believed that self-adjoint Laplace-type operators on bounded domains always have purely discrete spectrum 
(or, equivalenty, a compact resolvent). This is indeed true 
for Laplace operators subject to Dirichlet boundary conditions (Dirichlet Laplacian), but, in general, not true for Neumann Laplacians. 
In fact, the discreteness of the spectrum of the Neumann Laplacian is equivalent to the compactness of the embedding $\H^1(\Omega)\hookrightarrow \L(\Omega)$,
and for this a necessary and sufficient criterion was obtained by C.J.~Amick \cite{A78}; cf. Theorem~\ref{th-A}. 
The standard example of a bounded domain for which essential spectrum for Neumann Laplacian appears is a so-called called 
``rooms-and-passages'' domain: a chain of bounded domains (``rooms'') connected through narrow rectangles  (``passages''), see Figure~\ref{fig1}. 
Rooms-and-passages domains are widely used in spectral theory and the theory of Sobolev spaces in order to demonstrate various 
peculiar effects (see, e.g., \cite{EH87,A78,Fr79}). {\color{black}Some spectral properties of such domains were investigated in \cite{BEW13}.   We also refer to the comprehensive monograph of V.G.~Mazya \cite{Ma11} (see also earlier contributions \cite{Ma79,Ma80,Ma85,MP97}), where rooms-and-passages together many other tricky domains were treated. }
In the celebrated paper \cite{HSS91} R.~Hempel, L.~Seco, and B.~Simon constructed a rooms-and-passages domain such that the spectrum of the 
Neumann Laplacian coincides with a prescribed closed set $\S\subset [0,\infty)$ with $0\in \S$. 
We review and prove their result in Theorem~\ref{th-HSS}; here also the continuous dependence of
the eigenvalues of Neumann Laplacians on varying domains discussed in Appendix~\ref{appa} plays an important role.
We also briefly recall another type of bounded domains -- so-called ``comb-like'' domains -- 
which allow to control the essential spectrum in the case $0\notin \S$.
Rooms-and-passages domains can also be used in a convenient way to control the discrete spectrum within compact intervals. 
We demonstrate this in Theorem~\ref{thCdV+}, where we establish a slightly weaker version of the following 
celebrated result by Y.~Colin de Verdi\`{e}re  \cite{CdV87}: for arbitrary numbers 
$0=\lambda_1<\lambda_2<\dots<\lambda_m$ there exists a bounded domain $\Omega\subset\R^n$ such that the spectrum of the
Neumann Laplacian on $\Omega$ is purely discrete and its first $m$ eigenvalues coincide with the above numbers. 
One of the main ingredients in our proof is a multidimensional version of the intermediate value theorem 
by R.~Hempel, T.~Kriecherbauer, and P.~Plankensteiner in \cite{HKP97}.
In fact, our Theorem~\ref{thCdV+} is also contained in a more general result established in \cite{HKP97}, where a domain was constructed in such a way 
that the essential spectrum and a part of the discrete spectrum of the Neumann Laplacian coincides with prescribed sets.

In Section~\ref{sec2} we show that similar tools and techniques can be used for a class of singular Schr\"odinger operators describing
the motion of quantum particles in potentials being supported at a discrete set. 
These operators are known as \textit{solvable models} of quantum mechanics \cite{AGHH05}.
Namely, we will treat differential operators defined by  the formal expression
\begin{gather*}
-{\d^2\over \d z^2}+\suml_{k\in\N}\be_k\langle\cdot\,,\,\delta_{z_k}'\rangle\delta_{z_k}', 
\end{gather*}
where $\delta_{z_k}'$ is the distributional derivative of the  delta-function supported at $z_k$,   $\langle\phi,\delta_{z_k}'\rangle$ denotes its action 
on the test function $\phi$ and $\be_k\in \R\cup\{\infty\}$. 
Such operators are called  Schr\"odinger operator with $\delta'$-interactions (or \textit{point dipole interactions}) and were studied (also in the multidimensional
setting) in numerous
papers; here we only refer the reader to \cite{GHKM80,GH87,BEL14,BGLL15,BLL13,ER16,JL16,AN06,AEL94,BK15,BN13a,BN13b,BSW95,CK19,EKMT14,Ex95a,Ex95b,
Ex96,EJ13,EJ14,EK15,EK18,EL18,KM10a,KM10b,KM14,LR15,
MPS16,M96,ZSY17} and the references therein.
We will show in Theorem~\ref{th-BK} (see also Theorem~\ref{th-BK+}) that the points $z_k$ and coefficients $\be_k$ can be chosen in such a way that the essential spectrum of 
the above operator coincides with a predefined closed set. In our proof we make use of well-known convergence results for quadratic forms, which we briefly recall in 
Appendix~\ref{appb}. Some of our arguments are also based and related to results in the recent paper 
\cite{KM14} by A.~Kostenko and M.M.~Malamud.

Finally, in Section~\ref{sec3} we consider a slightly more abstract problem which can also be viewed as a generalization of some of the above
problems: for a given densely defined symmetric operator $S$ with infinite defect numbers, that is, $S$ admits a self-adjoint extension 
$A$ and $\dom(A)/\dom(S)$ is infinite dimensional, 
and under the assumption that there exists a self-adjoint extension with discrete spectrum (or, equivalently, compact resolvent),
we construct a self-adjoint extensions of $S$ with prescribed essential spectrum (possibly unbounded from below and above). Here the prescribed essential spectrum is generated via a perturbation argument
and a self-adjoint operator $\Xi$ that acts in an infinite dimensional {\it boundary space} and plays the role of a parameter in a boundary condition.
Our result is also related to the series of papers \cite{ABMN05,ABN98,B04,BMN06,BN95,BN96,BNW93} by S.~Albeverio, J.~Brasche, M.M.~Malamud, H.~Neidhardt, and J.~Weidmann
in which the existence of self-adjoint extensions 
with prescribed point spectrum, absolutely continuous spectrum, and singular continuous spectrum in spectral gaps of a fixed underlying symmetric operator 
was discussed.

\subsubsection*{Acknowledgements}
A.K. is supported by the Austrian Science Fund (FWF) under Project No. M~2310-N32.

\section{Essential and discrete spectra of Neumann Laplacians\label{sec1}}

The main objective of this section is to highlight some spectral properties of the Neumann Laplacian on bounded domains. 
In the following let $\Omega$ be a bounded domain in $\mathbb{R}^n$ and assume that $n\ge 2$. As usual the Hilbert space of (equivalence classes of) 
square integrable complex functions on $\Omega$ is denoted by  $\L(\Omega)$, and $\H^1(\Omega)$ denotes the first order Sobolev space consisting 
of functions in $\L(\Omega)$ that admit weak derivatives (of first order) in $\L(\Omega)$. An efficient method to introduce the Neumann Laplacian
in a mathematically rigorous way is to consider the sesquilinear form $\a_\Omega$ defined by 
\begin{equation}\label{formn}
\a_\Omega[u,v]=\int_\Omega\nabla u\cdot\overline{\nabla v}\,\d x,\qquad \dom(\a_\Omega)=\H^1(\Omega).
\end{equation}
It is clear that this form is densely defined in $\L(\Omega)$, nonnegative, and one can show that the form is closed, i.e.
the form domain $\H^1(\Omega)$ equipped with the scalar product $\a_\Omega[\cdot,\cdot]+(\cdot,\cdot)_{\L(\Omega)}$ is complete.
The well-known first representation
theorem (see, e.g. \cite[Chapter 6, Theorem 2.1]{K66}) associates a unique nonnegative 
self-adjoint operator $A_\Omega$ in $\L(\Omega)$ to the form $\a_\Omega$ such that the domain inclusion 
$\dom(A_\Omega)\subset\dom(\a_\Omega)$ and the equality
\begin{equation}\label{opn}
(A_\Omega u, v)_{\L(\Omega)}=\a_\Omega[u,v],\quad u\in \dom(A_\Omega),\,\,v\in\dom(\a_\Omega),
\end{equation}
hold. The operator $A_\Omega$ is called \textit{the Neumann Laplacian} on $\Omega$. One can show  that
\begin{itemize}
\item $A_\Omega u = -\Delta u$,  where $-\Delta u$ is understood as a distribution.  

\item $\dom(A_\Omega)\subset \H^2_{\rm loc}(\Omega)\cap \H^1(\Omega)$.

\item If $\partial\Omega$ is  $C^2$-smooth then 
$$\dom(A_\Omega)=\left\{u\in  \H^2 (\Omega):\ \ds{\partial_n u}\!\restriction_{\partial\Omega}=0\right\},$$ 
where $\partial_n$ denotes the normal derivative on  $\partial\Omega$.
\end{itemize} 

Typically the boundary condition $\ds{\partial_n u}\!\restriction_{\partial\Omega}=0$ is referred to as {\it Neumann boundary condition},
which also justifies the terminology Neumann Laplacian. However, note that some regularity for the boundary of the domain has to
be required in order to be able to deal with a normal derivative. For completeness, we note that 
the assumption of a $C^2$-boundary above is not optimal (but almost) for $\H^2$-regularity
of the domain of the Neumann Laplacian.

The rest of this section deals with some spectral properties of Neumann Laplacians. First of all we discuss in a preliminary 
situation that
the Neumann Laplacian may have essential spectrum; since the domain $\Omega$ is bounded this may be a bit surprising at first sight.
In this context we then recall a well-known result due to R.~Hempel, L.~Seco, and B.~Simon from \cite{HSS91} how to explicitely 
construct a bounded rooms-and-passages-type domain (with nonsmooth boundary) such that the essential spectrum of the Neumann Laplacian coincides with a prescribed
closed set. Another related topic is to construct Neumann Laplacians on appropriate domains such that finitely many 
discrete eigenvalues coincide with a given set of points. Here we recall a famous result due to Y. Colin de Verdi\`{e}re from \cite{CdV87},
and supplement this theorem with a similar result which is  proved with a simple rooms-and-passages-type strategy.
Actually, Theorem~\ref{thCdV+} is also a special variant of a more general result 
by R.~Hempel, T.~Kriecherbauer, and P.~Plankensteiner in \cite{HKP97}.  

\subsection{Neumann Laplacians may have nonempty essential spectrum\label{subsec11}} 
 
Let $A_\Omega$ be the self-adjoint Neumann Laplacian in $\L(\Omega)$ and denote by $\sigma_{\ess}(A_\Omega)$ the essential spectrum
of $A_\Omega$.
It is well-known that $\sigma_{\ess}(A_\Omega)=\varnothing$ 
(which is equivalent to the compactness of the resolvent of $A_\Omega$ in $\L(\Omega)$)
if and only if 
\begin{gather}\label{compact}
\text{the embedding }i_\Omega:\H^1(\Omega)\hookrightarrow \L(\Omega)\text{ is compact};
\end{gather}   
cf. \cite[Satz 21.3]{T72}.
If the boundary $\partial\Omega$ is sufficiently
regular (for example Lipschitz) then \eqref{compact} holds; this result is known as Rellich's embedding theorem.
However, in general the embedding $i_\Omega$ need not be compact. In fact,
C.J.~Amick established in \cite[Theorem~3]{A78} a necessary and sufficient criterion for the compactness of the embedding operator $i_\Omega$, which we recall
in the next theorem. 

\begin{theorem}[Amick, 1978]\label{th-A}
The embedding $i_\Omega$ in \eqref{compact} is compact if and only if   
\begin{equation}\label{gamma}
\Gamma_\Omega:=\liml_{\eps\to 0}\sup_{u\in \H^1(\Omega)}{\|u\|^2_{\L(\Omega\e)}\over \|u\|^2_{\H^1(\Omega)}}=0,
\end{equation}
where $\Omega\e=\{x\in\Omega:\ \dist(x,\partial\Omega)<\eps\}$.
\end{theorem}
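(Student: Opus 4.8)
The plan is to establish the two implications separately, after first noting that $\Gamma_\Omega$ is well defined: for each fixed $\eps>0$ the supremum in \eqref{gamma} lies in $[0,1]$, since $\|u\|_{\L(\Omega\e)}\le\|u\|_{\L(\Omega)}\le\|u\|_{\H^1(\Omega)}$, and it is nondecreasing in $\eps$ because the sets $\Omega\e$ grow with $\eps$; hence the limit as $\eps\downarrow0$ exists and equals the infimum over $\eps>0$ of this supremum. The easier implication is that compactness of $i_\Omega$ forces $\Gamma_\Omega=0$.

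To prove that implication I would argue by contraposition: assume $\Gamma_\Omega>0$, so that $\sup_{u\in\H^1(\Omega)}\|u\|_{\L(\Omega\e)}^2\ge\Gamma_\Omega\,\|u\|_{\H^1(\Omega)}^2$ for every $\eps>0$. Choose $\eps_k\downarrow0$ and normalized functions $u_k\in\H^1(\Omega)$, $\|u_k\|_{\H^1(\Omega)}=1$, with $\|u_k\|_{\L(\Omega^{\eps_k})}^2\ge\Gamma_\Omega/2$. If $i_\Omega$ were compact, some subsequence would converge, say $u_k\to u$ in $\L(\Omega)$ with $u\in\L(\Omega)$. Since every point of $\Omega$ has positive distance to $\partial\Omega$, one has $\bigcap_k\Omega^{\eps_k}=\varnothing$, hence $\mathbf{1}_{\Omega^{\eps_k}}\to0$ almost everywhere, and dominated convergence gives $\|u\|_{\L(\Omega^{\eps_k})}\to0$; together with $\|u_k-u\|_{\L(\Omega)}\to0$ and the triangle inequality this forces $\|u_k\|_{\L(\Omega^{\eps_k})}\to0$, contradicting the lower bound $\Gamma_\Omega/2>0$. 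Hence $i_\Omega$ cannot be compact.

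For the converse I would assume $\Gamma_\Omega=0$ and let $(u_k)$ be an arbitrary sequence in $\H^1(\Omega)$ with $\|u_k\|_{\H^1(\Omega)}\le M$, and extract an $\L(\Omega)$-convergent subsequence by a diagonal procedure. Fix $m\in\N$; since $\Gamma_\Omega=0$ there is $\eps_m>0$ with $\|v\|_{\L(\Omega^{\eps_m})}^2\le m^{-2}\|v\|_{\H^1(\Omega)}^2$ for all $v\in\H^1(\Omega)$. The set $K_m:=\{x\in\Omega:\dist(x,\partial\Omega)\ge\eps_m\}$ is a compact subset of $\Omega$ (it is closed in $\R^n$ and bounded), so it can be covered by finitely many open balls $B_1,\dots,B_N$ with $\overline{B_j}\subset\Omega$; put $V_m:=\bigcup_{j=1}^{N}B_j$, so that $\Omega\setminus V_m\subset\Omega\setminus K_m=\Omega^{\eps_m}$. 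Applying Rellich's embedding theorem on each ball $B_j$ to the (bounded in $\H^1(B_j)$) restrictions of the $u_k$, and iterating over $j=1,\dots,N$, I obtain a subsequence of $(u_k)$ that is Cauchy in $\L(B_j)$ for each $j$, hence Cauchy in $\L(V_m)$. For two members $u_i,u_\ell$ of that subsequence,
\[
\|u_i-u_\ell\|_{\L(\Omega)}^2=\|u_i-u_\ell\|_{\L(V_m)}^2+\|u_i-u_\ell\|_{\L(\Omega\setminus V_m)}^2\le\|u_i-u_\ell\|_{\L(V_m)}^2+\frac{(2M)^2}{m^2},
\]
so along this subsequence $\limsup_{i,\ell\to\infty}\|u_i-u_\ell\|_{\L(\Omega)}\le 2M/m$. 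A diagonal extraction over $m\in\N$ then yields a single subsequence of $(u_k)$ that is Cauchy, hence convergent, in $\L(\Omega)$, which proves that $i_\Omega$ is compact.

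The only step beyond routine bookkeeping is the interior compactness invoked in the last paragraph — that the restrictions of $\H^1(\Omega)$-functions to a subdomain with compact closure in $\Omega$ form a relatively compact subset of $\L^2$ of that subdomain. I expect no real obstacle here, since it is precisely Rellich's theorem applied on the finitely many balls covering $K_m$; the remaining work is organizing the defining inequality for $\Gamma_\Omega$ into a standard total-boundedness/diagonal argument for the sufficiency direction.
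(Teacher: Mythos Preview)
The paper does not actually prove Theorem~\ref{th-A}; it is quoted from \cite[Theorem~3]{A78} and used as a black box, with the subsequent discussion only \emph{applying} the criterion to the rooms-and-passages domain to exhibit $\Gamma_\Omega>0$. So there is no ``paper's own proof'' to compare against.

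Your argument is correct and is essentially the standard way to prove Amick's criterion. The necessity direction (compactness $\Rightarrow\Gamma_\Omega=0$) is handled cleanly by your contraposition; the only cosmetic issue is the sentence ``$\sup_{u\in\H^1(\Omega)}\|u\|_{\L(\Omega\e)}^2\ge\Gamma_\Omega\,\|u\|_{\H^1(\Omega)}^2$'', where the free $u$ on the right clashes with the bound variable in the supremum --- you clearly mean that the supremum of the \emph{ratio} is $\ge\Gamma_\Omega$, and the subsequent choice of normalized $u_k$ shows you use it correctly. For the sufficiency direction the interior Rellich step is unproblematic: restriction $\H^1(\Omega)\to\H^1(B_j)$ is bounded and $\H^1(B_j)\hookrightarrow\L(B_j)$ is compact on each ball, and Cauchy in every $\L(B_j)$ implies Cauchy in $\L(V_m)$ since $\|\cdot\|_{\L(V_m)}^2\le\sum_j\|\cdot\|_{\L(B_j)}^2$. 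The diagonal extraction is routine once you build the $V_m$ from nested subsequences. One small suggestion: you could bypass the explicit cover by balls and simply invoke that $\H^1_{\mathrm{loc}}(\Omega)\hookrightarrow\L_{\mathrm{loc}}(\Omega)$ is compact (restriction to any open $V\Subset\Omega$ is compact), which packages the same content.
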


\begin{remark}\color{black}
	Necessary and sufficient conditions for $\sigma_{\ess}(A_\Omega)=\varnothing$ have been also obtained in \cite{Ma68}. These conditions are formulated in terms of capacities.
\end{remark}	

In \cite{A78} an example of a bounded domain $\Omega\subset\mathbb{R}^2$ consisting of countably 
many rooms $R_k$ and passages $P_k$ with $\Gamma_\Omega>0$ was constructed (see   Figure~\ref{fig1}). 

  \begin{figure}[h]
  \begin{picture}(350,78)
  \includegraphics[width=120mm]{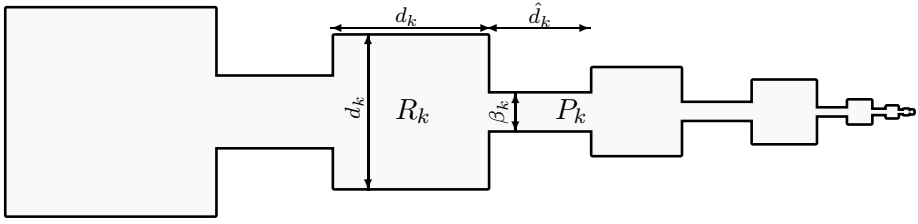}

  \put(-145,72){\vector(1,0){23}}
  \put(-145,72){\vector(-1,0){16}}
  \put(-145,78){$_{\hat d_k}$}

  \put(-190,72){\vector(-1,0){29}}
  \put(-190,72){\vector(1,0){31}}
  \put(-195,76){$_{d_k}$}

  \put(-135,38){${P_k}$}
  \put(-150,40){\vector(0,1){8}}
  \put(-150,40){\vector(0,-1){7}}
  \put(-156,36){\begin{rotate}{90}$_{\beta_k}$\end{rotate}}

  \put(-195,38){$R_k$}
  \put(-205,40){\vector(0,1){30}}
  \put(-205,40){\vector(0,-1){29}}
  \put(-210,38){\begin{rotate}{90}$_{d_k}$\end{rotate}}

  \end{picture}
  \caption{Rooms-and-passages domain $\Omega$ \label{fig1}}
  \end{figure}

For the convenience of the reader we wish to recall this construction in the following.
Note that we impose slightly different assumptions on the rooms and passages compared to \cite{A78}.
Consider some sequences
$(d_k)_{k\in\N}$ and 
$(\hat d_k)_{k\in\N}$  of positive numbers such that
\begin{gather}
\label{sum}
\suml_{k\in\N}d_k<\infty
\end{gather}
and assume that there is a constant $C_1>0$ with the property 
\begin{gather}
\label{ddd}
 \hat d_{k}\leq C_1\min\left\{d_k;\,d_{k+1}\right\}.
\end{gather}
Note that \eqref{sum}-\eqref{ddd} imply 
\begin{equation}\label{sum+}
\suml_{k\in\N}\hat d_k<\infty
\end{equation}
and
\begin{equation}\label{ddTo0}
\lim_{k\to\infty}d_k=0,\quad \lim_{k\to\infty}\hat d_k=0.
\end{equation}
One can choose, for example,
$d_k=(2k-1)^{-2},\ \hat d_k=(2k)^{-2},\  k\in\N$;
then conditions \eqref{sum} and \eqref{ddd} hold with $C_1\in \big[{9\over 4},\infty\big)$. 
Finally, let  
$(\beta_k)_{k\in\N}$ be a sequence of positive numbers
such that for all $k\in\N$
\begin{gather}\label{beta}
\beta_k \leq C_2 (\hat d_k)^\al
\end{gather}
with some  $\al\ge 3$ and $C_2>0$ such that
\begin{gather}
\label{C2}
C_2\leq \frac{1}{C_1}\cdot\left(\max_{k\in\N}\hat d_k\right)^{1-\alpha}. 
\end{gather}

In the next step define the sequence $(x_k)_{k\in\N}$ by 
\begin{equation}\label{xk}
x_k:=\suml_{j=1}^k (d_j + \hat d_j) - \hat d_k,
\end{equation}
and define the rooms $R_k$ and passages $P_k$  by
\begin{equation}\label{RP1}
 R_k: = (x_k-d_k,x_k)\times \left(-{d_k\over 2},{d_k\over 2}\right)
\end{equation}
and
\begin{equation}\label{RP2}
P_k: = \bigl[x_k,x_{k}+\hat d_k\bigr]\times \left(-{\beta_k\over 2},{\beta_k\over 2}\right),
\end{equation}
respectively. Finally, the union of $R_k$ and $P_k$ leads to the desired rooms-and-passages domain 
\begin{equation}\label{RP3}
 \Omega:=\cupl_{k\in\N}\left(R_k\cup P_k\right).
\end{equation}
From \eqref{sum}, \eqref{sum+}, and \eqref{beta} it is clear that $\Omega$ is bounded.
Using \eqref{ddd}, \eqref{beta}, \eqref{C2} and taking into account that $\alpha>3$ we obtain the estimate 
\begin{gather}\label{betaestk}
\beta_k \leq C_2 (\hat d_k)^\al  \leq C_2(\hat d_k)^{\al-1}C_1 \min\left\{d_k,d_{k+1}\right\}   \leq \min\left\{d_k,d_{k+1}\right\}.
\end{gather}
Hence the thickness of the passage $P_k$ is not larger than the sides 
of the adjacent rooms  $R_{k}$ and $R_{k+1}$, which also shows that $\Omega$ is indeed an open set.

It will now be illustrated that for this particular domain $\Omega$ the quantity $\Gamma_\Omega$ in \eqref{gamma} is positive, so that
the embedding in \eqref{compact} is not compact. In particular, the essential spectrum of the Neumann Laplacian in $\L(\Omega)$ is not empty.
For this purpose consider the piecewise linear functions $u_k$, $k=2,3,\dots$, defined by 
\begin{gather*}
u_k(\textbf{x})=
\begin{dcases}
\ds{1\over d_k},&\textbf{x}=(x,y)\in R_k,\\ 
\ds{x_k+\hat d_k-x\over d_k\hat d_k},&\textbf{x}=(x,y)\in P_k,\\ 
\ds{x_{k-1}-x\over d_k(x_{k-1}-x_k+d_k)},&\textbf{x}=(x,y)\in P_{k-1},\\ 
0,&\text{otherwise}.
\end{dcases}
\end{gather*}
Note that $x_{k-1}-x_k+d_k=-\hat d_{k-1}$ by \eqref{xk}. It is easy to see that
the function $u_k$ belongs to $\H^1(\Omega)$. Next we evaluate its $\L$-norm. One computes
\begin{multline}\label{uk0}
\|u_k\|^2_{\L(\Omega)}=
\|u_k\|^2_{\L(R_k)}+\|u_k\|^2_{\L(P_{k-1})}+\|u_k\|^2_{\L(P_{k})}
\\=
1+ {1\over 3(d_k)^2}\left(\beta_{k-1}\hat d_{k-1}+\beta_{k}\hat d_k\right)
\leq
1+{C_2 \over 3(d_k)^2}\left( (\hat d_{k-1})^{\alpha+1}+(\hat d_{k})^{\alpha+1} \right).
\end{multline}
We also have (cf.~\eqref{ddd})
\begin{equation}
\label{uk00}
\hat d_{k-1} \leq C_1 d_k\quad\text{and}\quad
\hat d_{k  } \leq C_1 d_k.
\end{equation}
Using \eqref{uk00} and taking into account that
$\lim_{k\to\infty} d_k = 0$ and $\alpha\geq 3$, we obtain from \eqref{uk0}:
\begin{equation}\label{uk}
\|u_k\|^2_{\L(\Omega)}=1+o(1)\text{ as }k\to\infty.
\end{equation}
Now we estimate the $\L$-norm of $\nabla u_k$. Using \eqref{beta} and \eqref{uk00} we get
\begin{equation}\label{estixx}
\begin{split}
\|\nabla u_k\|^2_{\L(\Omega)}&=
\|\nabla u_k\|^2_{\L(P_{k-1})}+\|\nabla u_k\|^2_{\L(P_{k})}
=
{1\over (d_k)^2}\left({\beta_{k-1}\over \hat d_{k-1}}+{\beta_k\over \hat d_k}\right)
\\
&\leq
 {C_2 \over (d_k)^2}\left( (\hat d_{k-1})^{\alpha-1}+(\hat d_{k})^{\alpha-1} \right)\leq
2C_1 C_2(d_{k})^{\alpha-3}.
\end{split}
\end{equation} 
Moreover, it is clear that for any $\eps>0$ there exists $k(\eps)\in\N$ such that 
$\mathrm{supp}(u_k)\subset\Omega\e$ for all $k\ge k(\eps)$. This, \eqref{uk}, \eqref{estixx}, and 
\eqref{ddTo0}
yield $\Gamma_\Omega>0$ (recall that $\alpha\geq 3$). 
As an immediate consequence we conclude the following corollary.

\begin{corollary}\label{cor1}
Let $\Omega$ be the bounded rooms-and-passages domain in \eqref{RP3} and let $A_\Omega$ be the self-adjoint Neumann Laplacian in $\L(\Omega)$.
Then
$$\sigma_{\ess}(A_\Omega)\not=\varnothing $$
\end{corollary}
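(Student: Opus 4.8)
The plan is to combine the explicit test-function computation carried out above with Amick's criterion (Theorem~\ref{th-A}) and the abstract equivalence between $\sigma_\ess(A_\Omega)=\varnothing$ and compactness of the embedding $i_\Omega$ recorded in~\eqref{compact}. Concretely, the strategy is contrapositive: to prove $\sigma_\ess(A_\Omega)\neq\varnothing$ it suffices to show that $i_\Omega$ is \emph{not} compact, and by Theorem~\ref{th-A} this in turn follows once we verify $\Gamma_\Omega>0$ for the domain~$\Omega$ in~\eqref{RP3}.

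The main work is the lower bound $\Gamma_\Omega>0$, and this is exactly what the functions $u_k$ are designed to deliver. First I would note that each $u_k$ lies in $\H^1(\Omega)$ and is supported in the single room $R_k$ together with the two adjacent passages $P_{k-1}$ and $P_k$; hence, for any $\eps>0$ there is $k(\eps)$ such that $\mathrm{supp}(u_k)\subset\Omega\e$ for all $k\ge k(\eps)$, so these functions are admissible competitors in the supremum defining $\Gamma_\Omega$ at every scale $\eps$. Then I would invoke the two estimates already established: $\|u_k\|^2_{\L(\Omega)}=1+o(1)$ from~\eqref{uk}, and $\|\nabla u_k\|^2_{\L(\Omega)}\le 2C_1C_2(d_k)^{\al-3}$ from~\eqref{estixx}. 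Since $\al\ge 3$ and $d_k\to 0$ by~\eqref{ddTo0}, the gradient term stays bounded (indeed $\le 2C_1C_2(\max_k d_k)^{\al-3}$), so
\begin{equation*}
\|u_k\|^2_{\H^1(\Omega)}=\|u_k\|^2_{\L(\Omega)}+\|\nabla u_k\|^2_{\L(\Omega)}
\end{equation*}
is bounded above by a constant $M$ independent of $k$, while $\|u_k\|^2_{\L(\Omega\e)}=\|u_k\|^2_{\L(\Omega)}=1+o(1)$ for $k\ge k(\eps)$. Therefore
\begin{equation*}
\sup_{u\in\H^1(\Omega)}\frac{\|u\|^2_{\L(\Omega\e)}}{\|u\|^2_{\H^1(\Omega)}}\ge \sup_{k\ge k(\eps)}\frac{\|u_k\|^2_{\L(\Omega\e)}}{\|u_k\|^2_{\H^1(\Omega)}}\ge \frac{1}{M}>0
\end{equation*}
uniformly in $\eps$, whence $\Gamma_\Omega=\liml_{\eps\to0}(\cdots)\ge 1/M>0$.

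To finish, Theorem~\ref{th-A} then gives that $i_\Omega$ is not compact, and the equivalence~\eqref{compact} (i.e. $\sigma_\ess(A_\Omega)=\varnothing\iff i_\Omega$ compact, cf.~\cite[Satz 21.3]{T72}) yields $\sigma_\ess(A_\Omega)\neq\varnothing$. I do not expect any serious obstacle here, since all the analytic estimates have been done; the only point requiring a little care is making the argument uniform in $\eps$, that is, exhibiting a single lower bound $1/M$ for the supremum that does not deteriorate as $\eps\to0$ — this is handled precisely because, for each fixed $\eps$, infinitely many of the uniformly form-bounded functions $u_k$ are supported in $\Omega\e$, so the supremum over $\H^1(\Omega)$ never drops below $1/M$. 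One could also phrase the conclusion directly in terms of a singular Weyl sequence built from (a tail of) the $u_k$, but routing through Amick's theorem as above is the shortest path given what is already available.
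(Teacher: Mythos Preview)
Your proposal is correct and follows exactly the approach of the paper: the paper carries out the test-function computation for the $u_k$, observes that $\Gamma_\Omega>0$ ``as an immediate consequence'' of \eqref{uk}, \eqref{estixx}, \eqref{ddTo0} and the support property $\mathrm{supp}(u_k)\subset\Omega^\eps$ for large $k$, and then invokes Theorem~\ref{th-A} together with the equivalence \eqref{compact}. You have simply spelled out the uniform-in-$\eps$ lower bound $\Gamma_\Omega\ge 1/M$ that the paper leaves implicit.
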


The natural question that arises in the context of Corollary~\ref{cor1} is what form the essential spectrum of the Neumann Laplacian $A_\Omega$ may have. 
This topic is discussed in the next subsection.

\subsection{Neumann Laplacian with prescribed essential spectrum\label{subsec12}}

In the celebrated  paper \cite{HSS91} R.~Hempel, L.~Seco, and B.~Simon 
have shown, using rooms-and-passages-type domains of similar form as above, that the 
essential spectrum of the Neumann Laplacian can be rather arbitrary. 
Below we briefly describe their construction.

We fix sequences  $(d_k)_{k\in\N}$ and $(\hat d_k)_{k\in\N}$ of positive numbers
satisfying 
\begin{gather}
\label{dd}
\suml_{k\in\N}(d_k+\hat d_k)<\infty,
\end{gather}
and of course one then has \eqref{ddTo0}.
Let the domain $\Omega\subset\mathbb{R}^2$ consist  of countably many rooms $R_k$ and passages $P_k$. Furthermore, 
in each room we insert an additional ``wall'' $W_k^{\al_k}$ (see   Figure~\ref{fig2}), and the resulting modified room is then denoted by
$R_k^{\alpha_k}=R_k\setminus W_k^{\al_k}$, so that 
\begin{equation}\label{omega23}
\Omega=\cupl_{k\in\mathbb{N}}\bigl( R_k^{\al_k}\cup P_k\bigr)=\cupl_{k\in\mathbb{N}}\bigl((R_k\setminus W_k^{\al_k})\cup P_k\bigr).
\end{equation}
Here $R_k$ and $P_k$ are defined by \eqref{RP1} and \eqref{RP2}
with $\beta_k$ satisfying
\begin{gather}\label{b} 
0<\beta_k\leq \min\left\{d_k;\,d_{k+1}\right\};
\end{gather}
cf. \eqref{betaestk}.
The walls
$W_k^{\al_k}$ are given by 
$$W_k^{\al_k}:=\left\{\mathbf{x}=(x,y)\in\mathbb{R}^2:\ x=x_k-{d_k\over 2},\ |y|\in \bigg[{\al_k\over 2},{d_k\over 2}\bigg]\right\},$$
where it is assumed that the sequence $(\al_k)_{k\in\N}$ satisfies
\begin{gather}\label{a} 
0<\al_k\leq d_k.
\end{gather}
 
 \begin{figure}[h]
 \begin{picture}(290,70)
 \includegraphics[width=100mm]{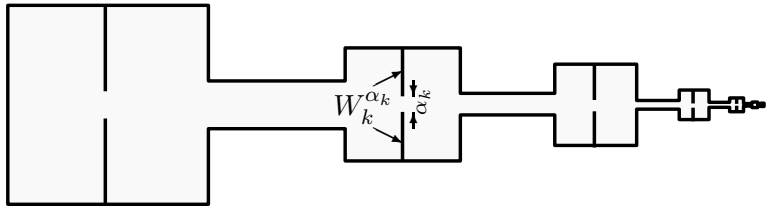}
  
 \put(-162,35){$W_k^{\al_k}$}
 \put(-147,30){\vector(2,-1){11}}
 \put(-147,46){\vector(2, 1){11}}
 
  \put(-132,30){\vector(0,1){6}}
  \put(-132,48){\vector(0,-1){6}}
  \put(-129,35){\begin{rotate}{90}$_{\al_k}$\end{rotate}}
  
  \end{picture}
  \caption{Rooms-and-passages domain with additional walls\label{fig2}}
  \end{figure}
  
\begin{theorem}[Hempel-Seco-Simon, 1991]\label{th-HSS}
Let $\S\subset [0,\infty)$ be an arbitrary closed set such that  $0\in \S$. Then there exist  sequences $(d_k)_{k\in\N}$, $(\hat d_k)_{k\in\N}$, $(\al_k)_{k\in\N}$,
$(\be_k)_{k\in\N}$ satisfying \eqref{dd}, \eqref{b}, and \eqref{a} such
that 
\begin{gather}
\label{essS}
\sigma_\ess(A_\Omega)=\S.
\end{gather}
\end{theorem}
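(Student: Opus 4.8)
The plan is to construct the sequences so that each room-and-passage pair (together with its inserted wall) contributes, in the limit $k\to\infty$, a single prescribed eigenvalue, and to arrange these eigenvalues to be dense in the target set $\S$. First I would record the heuristic mechanism: in a rooms-and-passages domain the passages $P_k$ are so thin (because of \eqref{b} and \eqref{ddTo0}) that, spectrally, they asymptotically decouple the rooms from one another; a Dirichlet-type bracketing argument (cutting along the passages) shows that $\sigma_\ess(A_\Omega)$ is governed by the "boundary at infinity'' of the sequence of room-problems, in the spirit of Theorem~\ref{th-A}. On the other hand, a room $R_k$ with a wall $W_k^{\al_k}$ attached to a passage of width $\be_k$ behaves, after rescaling, like a fixed bounded Neumann problem whose lowest nontrivial eigenvalue can be tuned to any prescribed value $\mu\ge 0$: shrinking the aperture $\al_k$ of the wall (while keeping $d_k$ of order one relative to itself) pushes this eigenvalue up continuously from $0$ to $+\infty$. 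The scaling $-\Delta$ on a domain of diameter $d_k$ has eigenvalues of order $d_k^{-2}$, so to place an eigenvalue near a target $\mu$ one picks $d_k$ comparable to a fixed scale and then tunes $\al_k/d_k$; alternatively one rescales $d_k$ and compensates. The continuous and monotone dependence of these Neumann eigenvalues on the geometric parameters (the domain-monotonicity and convergence results collected in Appendix~\ref{appa}, together with an intermediate value argument) is exactly what makes the tuning possible.

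The concrete steps I would carry out are as follows. Step 1: fix a countable set $\{\mu_j:j\in\N\}\subset\S$ that is dense in $\S$, with each value repeated infinitely often (so that every point of $\S$ is an accumulation point of the realized eigenvalues); enumerate it as a sequence $(\mu_k)_{k\in\N}$. Step 2: for each $k$, show that there is a choice of $d_k$, $\al_k\in(0,d_k]$ such that the Neumann Laplacian on the single modified room $R_k^{\al_k}$ glued to a short stub of the adjacent passages has a (suitably normalized) eigenvalue equal to $\mu_k$, and that the corresponding eigenfunction is essentially supported away from the passage mouths; here I invoke the monotone/continuous dependence on $\al_k$ and the intermediate value theorem. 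Step 3: choose $\hat d_k$ and $\be_k$ small enough (decaying fast, e.g. geometrically) relative to $d_k$ so that \eqref{dd} and \eqref{b} hold and so that the passages are negligible: quantitatively, the form $\a_\Omega$ splits, up to errors tending to $0$, into a direct sum over the rooms plus a contribution from the passages whose associated operator has norm controlled by $\be_k/\hat d_k\to 0$ and $\hat d_k\to 0$. Step 4: prove the two inclusions. For $\S\subset\sigma_\ess(A_\Omega)$, use the approximate eigenfunctions from Step 2, extended by zero (or cut off smoothly in the passages): they form a singular/Weyl sequence at $\mu_k$, hence every $\mu\in\S$, being a limit of $\mu_k$'s, lies in $\sigma_\ess(A_\Omega)$. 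For $\sigma_\ess(A_\Omega)\subset\S$, use Neumann bracketing: decompose $\L(\Omega)=\bigoplus_k \L(R_k^{\al_k})\oplus\bigoplus_k\L(P_k)$, bound $A_\Omega$ from below by the corresponding decoupled operator up to a relatively compact perturbation (the passages, being finitely many in any $\Omega\e$ only asymptotically, give a compact remainder), and observe that the decoupled room operators have spectra accumulating only on $\overline{\{\mu_k\}}=\S$ while the decoupled passage operators have first eigenvalue $\to\infty$; a Persson-type or bracketing estimate then yields $\sigma_\ess(A_\Omega)\subset\S\cup\{+\infty\text{-free}\}=\S$.

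The main obstacle, I expect, is Step 2 together with the quantitative decoupling in Steps 3–4: one must show precisely that the single-room Neumann eigenvalue can be set equal to an arbitrary prescribed $\mu\in[0,\infty)$ (including the delicate endpoint $\mu$ near $0$, which is why the hypothesis $0\in\S$ is natural — the constant function always contributes the eigenvalue $0$), and one must control the error made by reconnecting the rooms through thin passages uniformly well enough that no \emph{spurious} essential spectrum is created and none of the targeted values is destroyed. Making the bracketing rigorous — identifying the passage contribution as relatively compact (equivalently, verifying the Amick criterion \eqref{gamma} fails in exactly the controlled way prescribed by $\S$) and matching the constants in \eqref{b}, \eqref{a}, \eqref{dd} — is where the real work lies; the density argument and the Weyl-sequence construction for the inclusion $\S\subset\sigma_\ess(A_\Omega)$ are comparatively routine once the single-room tuning is in hand.
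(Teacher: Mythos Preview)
Your strategy is essentially the one the paper uses: decouple the rooms via the thin passages, tune the wall aperture in each room to produce a prescribed eigenvalue, and arrange these eigenvalues to accumulate exactly on $\S$. The paper's execution is, however, tighter in two respects worth adopting. First, rather than proving the two inclusions $\S\subset\sigma_\ess(A_\Omega)$ and $\sigma_\ess(A_\Omega)\subset\S$ separately via Weyl sequences and bracketing, the paper shows in one stroke that the resolvent difference $(A_\Omega+\Id)^{-1}-(A_{\rm dec}+\Id)^{-1}$ is compact (provided $\beta_k\to 0$ fast enough), where $A_{\rm dec}$ is the orthogonal sum of Neumann Laplacians on the rooms $R_k^{\alpha_k}$ and Dirichlet--Neumann Laplacians on the passages; Weyl's theorem then gives $\sigma_\ess(A_\Omega)=\sigma_\ess(A_{\rm dec})$ immediately, and the essential spectrum of the decoupled operator is just the accumulation set of the room eigenvalues. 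Second, the single-room tuning is simpler than your Step~2 suggests: no stubs are needed, one works with the Neumann Laplacian on $R_k^{\alpha_k}$ alone, and the relevant eigenvalue is $\lambda_2(R_k^{\alpha_k})$, which runs continuously and monotonically from $0$ (at $\alpha_k=0$, when the room splits in two) to $(\pi/d_k)^2$ (at $\alpha_k=d_k$, no wall), not to $+\infty$; one first picks $d_k$ small enough that the target $s_k<(\pi/d_k)^2$ and then invokes the intermediate value theorem. The forced presence of $0$ in $\sigma_\ess$ comes, as you note, from $\lambda_1(R_k^{\alpha_k})=0$ for every $k$.
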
  

\begin{proof}
This sketch of the proof from \cite{HSS91} consists of three steps. In the first step, where we skip a perturbation argument, it is shown that
the original problem to ensure \eqref{essS} for the Neumann Laplacian $A_\Omega$ on the domain $\Omega$ 
can be reduced to show the same property for a ``decoupled'' Neumann Laplacian $A_{\rm dec}$. The spectrum of this operator can be described explicitly, 
which is done in the second step. Finally, in a third step the parameters are adjusted in such a way that \eqref{essS} holds.\\
 
\noindent
{\it Step 1.} Let $(d_k)_{k\in\N}$, $(\hat d_k)_{k\in\N}$, $(\al_k)_{k\in\N}$, and
$(\be_k)_{k\in\N}$ be some sequences that satisfy \eqref{dd}, \eqref{b}, and \eqref{a}. Let $\Omega$ be the corresponding 
domain in \eqref{omega23} and let $A_\Omega$ be the Neumann Laplacian on $\Omega$ defined via the quadratic form as in \eqref{formn}--\eqref{opn}.
In the following we denote by $A_{R_k^{\al_k}}$ 
the Neumann Laplacian
on the domain $R_k^{\alpha_k}=R_k\setminus W_k^{\al_k}$, also defined via the quadratic form
\begin{equation}\label{form5}
\a_{R_k^{\alpha_k}}[u,v]=\int_{R_k^{\al_k}}\nabla u\cdot\overline{\nabla v}\,\d x,\quad
\dom\bigl(\a_{R_k^{\al_k}}\bigr)=\H^1(R_k^{\al_k}),
\end{equation}
in the same way as in \eqref{formn}--\eqref{opn}. Informally speaking, the functions in the domain of this operator satisfy Neumann 
boundary conditions on the boundary of the room $R_k$ and, in addition, 
Neumann boundary conditions on both sides of the additional wall $W_k^{\al_k}$. Furthermore, we will
make use self-adjoint Laplacians  
on the interiors $\mathring P_k$ of the passages $P_k$ with mixed Dirichlet and Neumann boundary conditions. 
More precisely, $A_{\mathring P_k}^{\text{\tiny{DN}}}$
denotes the self-adjoint Laplacian defined on a subspace of $\H^1(\mathring P_k)$, where it is assumed that the functions in the domain satisfy 
Neumann boundary conditions on $\{\textbf{x}=(x,y)\in\partial \mathring P_k:\ y=\pm \be_k/2\}$
and Dirichlet boundary conditions on the remaining part $\{\textbf{x}=(x,y)\in\partial \mathring P_k:\ x=x_k\vee x=x_{k}+\hat d_k\}$ of the boundary.
Now  consider the ``decoupled'' operator 
\begin{equation}\label{orto}
A_{\rm dec}=
\bigoplus_{k\in\N} 
\bigl(
A_{R_k^{\al_k}} 
\oplus
A_{\mathring P_k}^{\text{\tiny{DN}}}\bigr)
\end{equation}
as an orthogonal sum of the self-adjoint operators $A_{R_k^{\al_k}}$ and $A_{\mathring P_k}^{\text{\tiny{DN}}}$ in the space 
$$\L(\Omega)=\bigoplus_{k\in\N} \bigl(\L(R_k^{\al_k})\oplus\L(\mathring P_k)\bigr).$$
Then one can show that the resolvent difference 
$(A_{\rm dec}+\Id)^{-1} - (A_\Omega+\Id)^{-1}$
of the Neumann Laplacian $A_\Omega$ and the decoupled operator $A_{\rm dec}$
is a compact  operator
provided $\beta_k\to 0$ sufficiently fast as $k\to \infty$.  
We fix such a sequence $(\be_k)_{k\in\N}$; then from
Weyl's theorem  (see, e.g., \cite[Theorem XIII.14]{RS78}) one concludes
$$\sigma_\ess(A_\Omega)=\sigma_\ess(A_{\rm dec})$$ and hence it remains to show that 
$$\sigma_\ess(A_{\rm dec})=\S.$$

\noindent
{\it Step 2.}
First we shall explain how the eigenvalues of the Neumann Laplacian on $R_k^{\al_k}$ depend on the size of the wall $W_k^{\al_k}$
inside $R_k$. In this step of the proof the value $\al_k=0$ is also allowed (in this case the room $R_k^{\al_k}$ decouples: it becomes 
a union of two disjoint rectangles). 
We denote the eigenvalues of $A_{R_k^{\alpha_k}}$ (counted with multiplicities) and ordered
as a nondecreasing sequence by $(\lambda_{j}(R_k^{\alpha_k}))_{j\in\N}$.
It is not difficult to check that the corresponding forms $\a_{R_k^{\al_k}}$ in \eqref{form5} are monotone in the parameter $\alpha_k$, that is, 
for $0\leq\alpha_k\leq\widetilde \alpha_k\leq d_k$ one has 
\begin{gather*}
\dom(\a_{R_k^{\al_k}})\supset\dom(\a_{R_k^{\widetilde \al_k}}),\\
\a_{R_k^{\al_k}}[u,u]= \a_{R_k^{\widetilde \al_k}}[u,u]\text{ for all }u\in\dom(\a_{R_k^{\widetilde \al_k}}),
\end{gather*}
which means $\a_{R_k^{\al_k}}\leq \a_{R_k^{\widetilde \al_k}}$ in the sense of ordering of forms.
Then it follows from the min-max principle (see, e.g., \cite[Section~4.5]{D95}) that for each $j\in\dN$ the function
\begin{gather}\label{alpfa-f}
[0,d_k]\supset\alpha_k\mapsto\lambda_j(R_k^{\al_k}) 
\end{gather}
is  nondecreasing and by Theorem~\ref{th-contin}
this function is also continuous. In the present situation it is clear that
\begin{equation}\label{lambda1R}
\lambda_1(R_k^{\al_k})=0,
\end{equation}
and
\begin{equation}
\label{lambda2R}
\lambda_2(R_k^{\al_k})=
\begin{cases}
0,&\al_k=0,\\
\left({\pi /   d_k}\right)^2,&\al_k=d_k,
\end{cases}
\end{equation} 
and due to the monotonicity of the function \eqref{alpfa-f} one also has 
\begin{gather}\label{lambda3R}
\lambda_3(R_k^{\al_k})\geq \lambda_3(R_k^0)=\left({\pi /   d_k}\right)^2.
\end{gather}
Furthermore, if $(\mu_j(\mathring P_k))_{j\in\N}$ denote the eigenvalues (counted with multiplicities) of $A_{\mathring P_k}^{\text{\tiny{DN}}}$ ordered
as a nondecreasing sequence then one verifies that the first eigenvalue $\mu_1(\mathring P_k)$ is given by 
\begin{gather}\label{lambda1mu1b}
\mu_1(\mathring P_k)= ({\pi / \hat d_k} )^2
\end{gather} 
for all $k\in\dN$.
From the orthogonal sum structure in \eqref{orto} it is clear that
$$
\sigma_{\ess}(A_{\rm dec})=
\acc\big((\lambda_j(R_k^{\al_k}))_{j,k\in\N}\big)\cup\acc\big((\mu_j(\mathring P_k))_{j,k\in\N}\big),
$$
where the symbol $\acc$ denotes the set of accumulation points of a sequence.
Observe that the eigenvalues $(\mu_j(\mathring P_k))_{j\in\N}$ do not have any finite accumulation point (the smallest eigenvalue satisfies 
\eqref{lambda1mu1b} and $\lim_{k\to\infty }\hat d_k= 0$ by assumption) and hence we obtain
$$
\sigma_{\ess}(A_{\rm dec})=
\acc\big((\lambda_j(R_k^{\al_k}))_{j,k\in\N}\big).
$$

\noindent
{\it Step 3.} Now we complete the proof by adjusting the parameters in the above construction. 
Since by assumption $\S$ is a closed subset of $[0,\infty)$ one can always find a sequence $(s_k)_{k\in \N}$
such that 
\begin{gather}
\label{s-assumpt}
s_k> 0\text{ and }\acc((s_k)_{k\in \N})=
\begin{cases}
\S\setminus\{0\},&0\text{ is an isolated point of }\S,\\
\S,&\text{otherwise}.
\end{cases}
\end{gather}
Next, for each $k\in\N$ we fix a number $d_k>0$ such that
\begin{gather}\label{sd}
s_k < (\pi/ d_k)^2.
\end{gather} 
In addition, we assume that  the numbers $d_k$ are chosen small enough so that
$$\suml_{k\in\N}d_k<\infty.$$
We also fix a sequence of positive numbers $(\hat d_k)_{k\in\mathbb{N} }$ such that 
$$\suml_{k\in\N}\hat d_k<\infty.$$
Using
the continuity of the function \eqref{alpfa-f} 
and taking into account \eqref{lambda2R} and \eqref{sd} it is clear that 
there exists $\al_k\in (0,d_k)$ such that the second eigenvalue $\lambda_2(R_k^{\al_k})$ of the Neumann Laplacian $A_{R_k^{\al_k}}$ 
satisfies
\begin{gather}\label{lambda2}
\lambda_2(R_k^{\al_k})=s_k
\end{gather}
for all $k\in\dN$.
Furthermore, by construction we also have $\lambda_1(R_k^{\al_k})=0$ and $\lambda_j(R_k^{\al_k})\geq (\pi/d_k)^2$ for $j\geq 3$ 
(cf. \eqref{lambda3R}), and hence we conclude together with
$\lim_{k\to\infty }d_k= 0$, \eqref{lambda1R}, \eqref{s-assumpt},  \eqref{lambda2},
and $0\in \S$
that 
$$\acc\big((\lambda_j(R_k^{\al_k}))_{j,k\in\N}\big)=
\acc\big((\lambda_j(R_k^{\al_k}))_{j\leq 2,\,k\in\N}\big)=
\{0\}\cup
\acc((s_k)_{k\in\N}) = \S.$$ 
This implies $\sigma_\ess (A_{\rm dec})=\S$ and completes the proof.  
\end{proof}

Besides \eqref{essS} it is also shown in \cite{HSS91}  that 
the absolutely continuous spectrum $\sigma_{\rm ac}(A_\Omega)$ of $A_\Omega$ is empty.
The argument is as follows: It is verified that the difference
$(A_\Omega+\Id)^{-2}-(A_{\rm dec}+\Id)^{-2}$
is a trace class operator, and consequently the absolutely continuous spectra of 
$A_\Omega$ and $A_{\rm dec}$ coincide; cf. \cite[page~30,\,Corollary~3]{RS79}.
Since $\sigma(A_{\rm dec})$ is  pure point one concludes $\sigma_{\rm ac}(A_\Omega)=\sigma_{\rm ac}(A_{\rm dec})=\emptyset$.
Note, that the absolutely continuous spectrum of the Neumann Laplacian on a bounded domain is not always empty. 
For example, in \cite{Si92} B.~Simon constructed a bounded set $\Omega$ having the form of a ``jelly roll'' 
such that $\sigma_{\rm ac}(A_\Omega)=[0,\infty)$.

In \cite{HSS91} R.~Hempel, L.~Seco, and B.~Simon 
also constructed a domain for which \eqref{essS} holds
without the restriction $0\in \S$.
For this purpose so-called comb-like domains are used, see Figure~\ref{fig3}.
To  construct the comb one attaches  a  sequence of ``teeth'' $(T_k^{\al_k})_{k\in\N}$ to a fixed rectangle $Q$;
the tooth $T_k^{\al_k}$ is obtained from a rectangle $T_k$ by removing an
internal wall  $W_k^{\al_k}$. The teeth have bounded lengths,  shrinking widths, and are stacked together without gaps.

The analysis is similar to the rooms-and-passages case. One can prove that the Neumann Laplacian $A_\Omega$ on such a comb-like domain $\Omega$ 
is a compact perturbation of the decoupled operator
$$
A_{\rm dec}=
\left(\bigoplus_{k\in\N} 
A_{T_k^{\al_k}}^{\text{\tiny{DN}}}\right) \oplus A_Q,
$$
where 
$A_{Q}$ is the Neumann Laplacian on $Q$ and $A_{T_k^{\al_k}}^{\text{\tiny{DN}}}$ is the Laplace operator on the tooth $T_k^{\al_k} $ subject to 
Neumann boundary conditions on $\partial T_k^{\al_k}\setminus\partial Q$ 
and Dirichlet boundary conditions on $\partial T_k^{\al_k} \cap\partial Q$.
The walls $W_k^{\al_k}$   are adjusted in such a way that 
the lowest eigenvalue 
  of $A_{T_k^{\al_k}}^{\text{\tiny{DN}}}$ coincides with a predefined number $s_k$, while the next eigenvalues tend to $\infty$ as $k\to\infty$ and do not contribute to essential spectrum.
 \begin{figure}[h]
 \begin{picture}(130,165)
 \includegraphics[height=60mm]{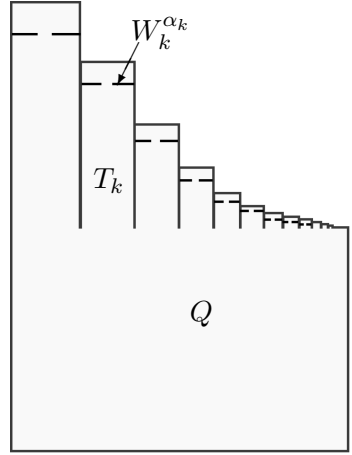}
 \put(-60,50){$Q$}
 \put(-96,100){$T_k$}
 \put(-82,156){$W_k^{\al_k}$}
 \put(-79,155){\vector(-1,-2){8}}
 \end{picture}
 \caption{Comb-like domain \label{fig3}}
 \end{figure}

The important and somewhat surprising element of the rooms-and-pass\-age and comb-like domain constructions
is the form of the decoupled operators with mixed Dirichlet and Neumann boundary conditions. 
The fact that one chooses Dirichlet conditions on the common part of the
boundaries of the passages $P_k$ and modified rooms $R_k^{\al_k}$, and similarly on the common part of the
boundaries of the modified teeth $T_k^{\al_k}$ and the rectangle $Q$
is due to the following well-known effect (see, e.g., \cite{An87,Arr95,J89}): 
the spectrum of the Neumann Laplacian on  
$Q\cup T\e$, where $Q$ is a fixed domain and $T\e$ is an  attached 
``handle'' of fixed length $L$ and width $\eps$, converges 
to the direct sum of the Neumann Laplacian
on $Q$ and the one-dimensional Dirichlet Laplacian on $(0,L)$ as $\eps\to 0$.
 
{\color{black}  
Finally, we note that A.A.~Kiselev and B.S.~Pavlov \cite{KP94} obtained Theorem~\ref{th-HSS} for
(a kind of) Neumann Laplacian on a bounded set consisting of an array of two-dimensional domains   connected 
by {intervals}.}

\subsection{Neumann Laplacian with prescribed discrete spectrum\label{subsec13}}

In this section we are interested in the discrete spectrum of Neumann Laplacians. First we recall
a result by Y.~Colin de Verdi\`{e}re from  \cite{CdV87}.

\begin{theorem}[Colin de Verdi\`{e}re, 1987]\label{thCdV}
Let $n\in\N\setminus\{1\}$ and assume that
\begin{gather*}
0=\lambda_1<\lambda_2<\dots<\lambda_m,\quad m\in\N,
\end{gather*}
are fixed numbers.  
Then there exists a bounded domain $\Omega\subset\R^n$ such that the spectrum of the
Neumann Laplacian 
$A_\Omega$ on $\Omega$ is purely discrete and 
\begin{gather*}
\lambda_k(\Omega)=\lambda_k,\quad k=1,\dots,m,
\end{gather*}
where $\left(\lambda_k(\Omega)\right)_{k\in\N}$ denotes the sequence of the eigenvalues of 
$A_\Omega$ numbered in increasing order with multiplicities taken into account.
\end{theorem}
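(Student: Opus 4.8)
The plan is to realise $\Omega$ as a thin tubular neighbourhood of a finite \emph{star graph} in $\R^n$, thereby reducing the prescription of the first $m$ eigenvalues to an explicitly solvable inverse spectral problem for a weighted graph Laplacian, and then, for a fixed small thickness, to adjust finitely many geometric parameters by a multidimensional intermediate value theorem — the same tool as used in the proof of Theorem~\ref{thCdV+} — so that the eigenvalues hit the prescribed values \emph{exactly}. As combinatorial model take the star $\GG$ with central vertex $v_0$ and leaves $v_1,\dots,v_{m-1}$, put a weight $c_i>0$ on the edge $\{v_0,v_i\}$ and a weight $\mu_j>0$ on the vertex $v_j$, and consider the generalised eigenvalue problem $\cL u=\lambda Mu$ in $\C^m$, where $M=\mathrm{diag}(\mu_0,\dots,\mu_{m-1})$ and $(\cL u)(v_i)=c_i(u(v_i)-u(v_0))$, $(\cL u)(v_0)=\sum_{i=1}^{m-1}c_i(u(v_0)-u(v_i))$. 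Since $\GG$ is connected, $\lambda_1=0$ is a simple eigenvalue; the remaining eigenvalues are the roots of the secular equation $\sum_{i=1}^{m-1}c_i(\lambda-a_i)^{-1}=\mu_0$ with $a_i:=c_i/\mu_i$, and these roots strictly interlace the poles $a_i$. Fixing $0<a_1<\lambda_2<a_2<\lambda_3<\dots<a_{m-1}<\lambda_m$ one then solves this equation \emph{explicitly} for positive $c_i,\mu_0$ (and sets $\mu_i=c_i/a_i$), so that the spectrum of $\cL u=\lambda Mu$ is exactly $\{\lambda_1,\dots,\lambda_m\}$; in fact one obtains a whole manifold of admissible weight vectors, and one checks that the weight-to-spectrum map $(c_i,\mu_j)\mapsto(\lambda_2,\dots,\lambda_m)$ is open at some admissible point (e.g.\ there a submersion), i.e.\ maps a neighbourhood of it onto a neighbourhood of $(\lambda_2,\dots,\lambda_m)$.

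Next I realise $\GG$ geometrically. Place $v_0$ at the origin and $v_1,\dots,v_{m-1}$ at points of $\R^n$ in general position, joining $v_0$ to each of them by a straight segment of fixed length $\ell_i>0$, the segments pairwise meeting only at the origin — possible for every $n\ge 2$ because $\GG$ is a tree. For a small $\eps>0$ form the bounded open set $\Omega_\eps\subset\R^n$ by taking a ball $B_j^\eps$ of radius $r_j(\eps)$ around each vertex $v_j$, replacing each segment by a straight cylinder $T_i^\eps$ of transverse radius $\rho_i(\eps)$, and gluing cylinders to balls along short necks; for small $\eps$ this is a connected domain with Lipschitz boundary, so $A_{\Omega_\eps}$ has purely discrete spectrum by Rellich's embedding theorem. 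Keep the radii in the \emph{thin regime} $\rho_i(\eps)\ll r_j(\eps)\to 0$ and choose $r_j(\eps)^n$, resp.\ $\rho_i(\eps)^{n-1}/\ell_i$, proportional to $\mu_j$, resp.\ $c_i$, with a common factor $\kappa(\eps)\to 0$ (up to lower-order neck corrections, to be absorbed in the final adjustment step); then the $\L$-norm and the Dirichlet energy of any function that is constant on each $B_j^\eps$ and affine along each $T_i^\eps$ reproduce, after dividing by $\kappa(\eps)$, the forms $\sum_j\mu_j|u(v_j)|^2$ and $\sum_i c_i|u(v_0)-u(v_i)|^2$ on $\C^m$.

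The heart of the proof is the spectral convergence $\lambda_k(\Omega_\eps)\to\lambda_k$ for $k=1,\dots,m$ and $\lambda_{m+1}(\Omega_\eps)\to\infty$ as $\eps\to 0$. The bounds $\limsup_\eps\lambda_k(\Omega_\eps)\le\lambda_k$ follow from the min--max principle applied to the $m$-dimensional trial space of functions that are constant on the balls and affine along the tubes, by the choice of the geometric parameters. For the matching lower bounds one shows, again via min--max, that any $u\in\H^1(\Omega_\eps)$ whose Rayleigh quotient is below a fixed constant is, up to an $\L$-error that vanishes as $\eps\to 0$ (controlled by $\rho_i/r_j$ and by the lowest nonzero transverse Neumann eigenvalues of the tubes and balls, of order $\rho_i^{-2}$ resp.\ $r_j^{-2}$, both $\to\infty$), close to a function of the above trial class; hence the bottom of $\sigma(A_{\Omega_\eps})$ is governed by the finite-dimensional problem $\cL u=\lambda Mu$, the span of the first $m$ eigenfunctions collapses onto $\C^m$, and the $(m+1)$-st Rayleigh quotient is pushed to infinity. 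This is the standard vertex-dominated ``thick--thin'' (fattened-graph) limit; a self-contained proof runs on Poincaré inequalities on balls and cylinders exactly as in the decoupling estimates of Section~\ref{sec1}, and one may instead invoke the known convergence theorems for Neumann Laplacians on fattened graphs.

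Finally, to pass from closeness to equality, fix $\eps>0$ small and let the finitely many geometric parameters (the radii $r_j$, $\rho_i$ and the lengths $\ell_i$) vary in a small neighbourhood of the reference values. By Theorem~\ref{th-contin} the eigenvalues $\lambda_2(\Omega_\eps),\dots,\lambda_m(\Omega_\eps)$ depend continuously on these parameters, and by the previous step the resulting map $\Phi_\eps$ converges, uniformly on that neighbourhood as $\eps\to 0$, to the weight-to-spectrum map $(c_i,\mu_j)\mapsto(\lambda_2,\dots,\lambda_m)$ of the first step, which is open onto a neighbourhood of $(\lambda_2,\dots,\lambda_m)$. A degree argument — precisely the multidimensional intermediate value theorem of Hempel--Kriecherbauer--Plankensteiner used for Theorem~\ref{thCdV+} — then shows that, for all sufficiently small $\eps$, the image of $\Phi_\eps$ still contains a neighbourhood of $(\lambda_2,\dots,\lambda_m)$; picking parameters that land exactly on $(\lambda_2,\dots,\lambda_m)$ gives $\lambda_k(\Omega_\eps)=\lambda_k$ for $k=2,\dots,m$, which together with $\lambda_1(\Omega_\eps)=0=\lambda_1$ (connectedness) and the purely discrete spectrum proves the theorem with $\Omega:=\Omega_\eps$. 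The main obstacle is the third step: establishing the fattened-graph convergence with the correct identification of the limiting weights — especially the lower bounds pinning low-energy functions to the vertex-constant/tube-affine class and the simultaneous escape of $\lambda_{m+1}(\Omega_\eps)$ — and getting the scaling of $\rho_i(\eps)$ relative to $r_j(\eps)$ quantitatively right; the concluding degree step is comparatively soft once the inverse-eigenvalue map is known to be open.
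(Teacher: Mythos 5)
Your overall route (prescribe the spectrum of a limiting graph model, fatten the graph into a thin domain, then use continuity plus an openness/degree argument to hit the values exactly) is exactly the strategy of Colin de Verdi\`{e}re that the paper only sketches and cites; the paper itself does not prove Theorem~\ref{thCdV} but instead proves the weaker Theorem~\ref{thCdV+} by rooms-and-passages. However, your quantitative implementation of the fattened-graph limit has a genuine gap. With the scaling you prescribe, $r_j(\eps)^n\asymp\kappa(\eps)\mu_j$ and $\rho_i(\eps)^{n-1}\asymp\kappa(\eps)c_i\ell_i$ with \emph{fixed} edge lengths $\ell_i$, the tube volumes $\rho_i^{\,n-1}\ell_i\asymp\kappa c_i\ell_i^2$ are of the \emph{same} order $\kappa$ as the ball volumes $\kappa\mu_j$, not lower order. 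Hence the $\L$-mass of your trial functions (and of every low-energy function) carries an order-one edge contribution, and the $\eps\to0$ limit of the low-lying spectrum is not the $m\times m$ pencil $\cL u=\lambda Mu$ but a weighted metric star graph with point masses at the vertices, whose eigenvalues solve a transcendental secular equation in $\sqrt{\lambda}\,\ell_i$. Moreover, the longitudinal tube modes have eigenvalues of order $(\pi/\ell_i)^2=O(1)$, so $\lambda_{m+1}(\Omega_\eps)$ stays bounded and does \emph{not} escape to infinity; your error control only invokes the transverse gaps $\rho_i^{-2}$, $r_j^{-2}$ and overlooks these modes. Consequently both the identification $\lambda_k(\Omega_\eps)\to\lambda_k$ (anchored at the explicit inverse-spectral solution of the \emph{discrete} model) and the final adjustment step, which is run in a neighbourhood of that discrete solution, fail as stated. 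A repair requires either letting $\ell_i\to0$ (so that edge mass $\ll$ vertex mass and $(\pi/\ell_i)^2\to\infty$), with a careful two-parameter limit, or solving the inverse problem directly for the metric-graph limit, which is essentially what Colin de Verdi\`{e}re does.

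A secondary but real issue is the concluding step: Lemma~\ref{lemma-hempel} is not a general degree argument. It requires each component of the eigenvalue map to be nondecreasing in each of the adjustable parameters; in Theorem~\ref{thCdV+} this monotonicity comes for free from the inclusion of form domains as the wall lengths grow, but for your parameters $(r_j,\rho_i,\ell_i)$ no such monotonicity is verified (nor is it to be expected). What you actually need is a Brouwer-degree or openness argument, and for that the asserted submersion property of the weight-to-spectrum map and the \emph{uniform} (on a parameter neighbourhood) convergence of $\Phi_\eps$ are precisely the points left unproven -- and, because of the scaling issue above, they would have to be established for the correct (metric-graph) limit map, not for the discrete pencil.
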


In fact, the main result in \cite{CdV87} concerns Riemannian manifolds: for an arbitrary 
compact connected manifold one can construct a Riemannian metric on this manifold in such a way that
the first $m$ eigenvalues of the corresponding Laplace-Beltrami operator coincide with $m$ predefined numbers.
The idea of the proof of this theorem in \cite{CdV87} is to first 
construct a suitable differential operator $A_\Gamma$ on a metric graph $\Gamma$ such that the first $m$ eigenvalues of $A_\Gamma$ coincide 
with the predefined numbers $\lambda_k$, $k=1,\dots,m$. 
Then the graph $\Gamma$ is  ``blown''  up to a tubular thin domain $\Omega$ in such a way that the first $m$ eigenvalues of the Neumann Laplacian 
$A_\Omega$ on $\Omega$ are asymptotically close to the first $m$ eigenvalues of $A_\Gamma$ provided the cross-section of $\Omega$ tends to zero.
For dimensions $n\ge 3$  the above theorem is extended in \cite{CdV87} by allowing nonsimple eigenvalues. More precisely, 
one can construct a domain in $\R^n$, $n\ge 3$, such that the first $m$ eigenvalues of the Neumann Laplacian on this domain coincide with the predefined numbers 
$$0=\lambda_1<\lambda_2\le\lambda_3\le \dots\le \lambda_m.$$

A similar theorem for the Dirichlet Laplacian (at least for $n=2$) can not be expected. In fact, 
by a well-known result of L.E.~Payne, G.~P\'olya, and H.F.~Weinberger from \cite{PPW56} 
the ratio between the $k$-th and the $(k-1)$-th Dirichlet eigenvalue (for compact domains in $\mathbb{R}^2$) is bounded from 
above (by a domain independent constant) and hence the eigenvalues can not be placed arbitrarily on $(0,\infty)$.   
 
Y.~Colin de Verdi\`{e}re's result from above was later
improved by R.~Hempel, T.~Kriecherbauer, and P.~Plankensteiner in \cite{HKP97}, where  a  bounded domain $\Omega$ was constructed
such that the essential spectrum and a finite part of the discrete spectrum of
$A_\Omega$
coincide with predefined sets. In their construction comb-like domains were used; see Figure~\ref{fig3}.
\medskip 

The next theorem may be viewed as a variant of Theorem~\ref{thCdV}, although it is actually a slightly weaker version. In fact, we present this result here 
since it can be proved
with a similar rooms-and-passages domain strategy as Theorem~\ref{th-HSS}. An important ingredient is a multidimensional version
of the intermediate value theorem from \cite{HKP97}; cf. Lemma~\ref{lemma-hempel} below.

\begin{theorem} \label{thCdV+}
Let $n\in\N\setminus\{1\}$ and assume that
\begin{gather*}
0<\nu_1<\nu_2<\dots<\nu_m,\quad m\in\N,
\end{gather*}
are fixed numbers. 
Then there exists a bounded domain $\Omega\subset\R^n$ such that the spectrum of the
Neumann Laplacian 
$A_\Omega$ on $\Omega$ is purely discrete and
\begin{gather*}
\lambda_{m+k}(\Omega)=\nu_k,\quad k=1,\dots,m,
\end{gather*}
where $\left(\lambda_l(\Omega)\right)_{l\in\N}$ denotes the sequence of the eigenvalues of 
$A_\Omega$ numbered in increasing order with multiplicities taken into account.
\end{theorem}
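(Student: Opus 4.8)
The plan is to follow closely the three-step strategy of the proof of Theorem~\ref{th-HSS}, but with a \emph{finite} chain of $m$ rooms and $m-1$ passages. A finite rooms-and-passages domain has a compact embedding $\H^1(\Omega)\hookrightarrow\L(\Omega)$, so by Theorem~\ref{th-A} the associated Neumann Laplacian automatically has purely discrete spectrum; the point is then to tune the heights $\al_1,\dots,\al_m$ of the interior walls so that $\lambda_{m+1}(\Omega),\dots,\lambda_{2m}(\Omega)$ hit $\nu_1,\dots,\nu_m$ \emph{exactly}. Since only the first $2m$ eigenvalues are prescribed, this is precisely the situation in which the multidimensional intermediate value theorem, Lemma~\ref{lemma-hempel}, applies. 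I would first carry out the construction in dimension $n=2$; for $n\ge 3$ it suffices to replace the planar domain $\Omega$ by $\Omega\times(0,L)^{n-2}$ with $L$ so small that $(\pi/L)^2>\nu_m$, since the Neumann Laplacian of the product is the closure of the sum of the two Neumann Laplacians, its spectrum is still purely discrete, and its first $2m$ eigenvalues coincide with those of $A_\Omega$.

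So let $n=2$. Fix $\eta>0$ with $2\eta<\min_{1\le k<m}(\nu_{k+1}-\nu_k)$, then $N>\nu_m+\eta$, and finally $d_1,\dots,d_m>0$, $\hat d_1,\dots,\hat d_{m-1}>0$ so small that $(\pi/d_k)^2>N$ and $(\pi/\hat d_k)^2>N$ for every $k$. With the rooms $R_k$, passages $P_k$ and slit walls $W_k^{\al_k}$ defined as in Subsection~\ref{subsec12} — but with $k$ running only over $1,\dots,m$, respectively $1,\dots,m-1$ — let $\Omega=\Omega(\al,\be)$ be the bounded domain built from the modified rooms $R_k^{\al_k}=R_k\setminus W_k^{\al_k}$ and the passages $P_k$ of widths $\be_k$ satisfying \eqref{b}; here $\al=(\al_1,\dots,\al_m)$ with $\al_k\in(0,d_k)$ and $\be=(\be_1,\dots,\be_{m-1})$ are free parameters. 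For every admissible choice $\Omega$ is connected (the slits do not disconnect a room because $\al_k>0$) and, being a finite union of rectangles glued along common edges with only finitely many interior slit tips, satisfies $\Gamma_\Omega=0$ in \eqref{gamma}; hence by Theorem~\ref{th-A} the Neumann Laplacian $A_\Omega$ has purely discrete spectrum and $\lambda_1(\Omega)=0$ is simple.

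Exactly as in Step~1 of the proof of Theorem~\ref{th-HSS} — now in its simpler finite version — the operator $A_{\Omega(\al,\be)}$ converges in the norm resolvent sense, uniformly for $\al$ in compact subsets of $\prod_{k=1}^m(0,d_k)$, to the decoupled operator
\begin{equation*}
A_{\rm dec}(\al)=\Bigl(\bigoplus_{k=1}^{m}A_{R_k^{\al_k}}\Bigr)\oplus\Bigl(\bigoplus_{k=1}^{m-1}A_{\mathring P_k}^{\text{\tiny{DN}}}\Bigr)
\qquad\text{as }\max_{k}\be_k\to 0 .
\end{equation*}
By \eqref{lambda1R}, \eqref{lambda2R}, \eqref{lambda3R}, \eqref{lambda1mu1b} and the choice of $d_k,\hat d_k$, the part of $\sigma(A_{\rm dec}(\al))$ in $[0,N)$ consists of the eigenvalue $0$ of multiplicity exactly $m$ together with the $m$ numbers $\lambda_2(R_1^{\al_1}),\dots,\lambda_2(R_m^{\al_m})$, everything else being $\ge N$; consequently $\lambda_{m+k}(A_{\rm dec}(\al))$ equals the $k$-th smallest of the $\lambda_2(R_j^{\al_j})$. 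Since by \eqref{lambda2R} and the continuity and monotonicity of the map \eqref{alpfa-f} the function $\al_k\mapsto\lambda_2(R_k^{\al_k})$ increases continuously from $0$ to $(\pi/d_k)^2>N>\nu_k+\eta$, we may pick $0<\al_k^-<\al_k^+<d_k$ with $\lambda_2(R_k^{\al_k^\mp})=\nu_k\mp\eta$, and set $B:=\prod_{k=1}^m[\al_k^-,\al_k^+]$. For $\al\in B$ one has $\lambda_2(R_k^{\al_k})\in[\nu_k-\eta,\nu_k+\eta]$ for all $k$, the intervals $[\nu_k-\eta,\nu_k+\eta]$ are pairwise disjoint and increasing, hence $\lambda_{m+k}(A_{\rm dec}(\al))=\lambda_2(R_k^{\al_k})$, which on the face $\{\al_k=\al_k^-\}$ equals $\nu_k-\eta$ and on the face $\{\al_k=\al_k^+\}$ equals $\nu_k+\eta$.

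Finally, choose $\be$ with $\max_k\be_k$ so small that $|\lambda_{m+k}(A_{\Omega(\al,\be)})-\lambda_{m+k}(A_{\rm dec}(\al))|<\eta/2$ for all $k\le m$ and all $\al\in B$; this is possible by the uniform norm resolvent convergence above together with the continuous dependence of the Neumann eigenvalues on $\al$ given by Theorem~\ref{th-contin}. Define $F=(F_1,\dots,F_m):B\to\R^m$ by $F_k(\al):=\lambda_{m+k}(A_{\Omega(\al,\be)})-\nu_k$. Then $F$ is continuous, $F_k<0$ on the face $\{\al_k=\al_k^-\}$ and $F_k>0$ on the face $\{\al_k=\al_k^+\}$, so by the multidimensional intermediate value theorem (Lemma~\ref{lemma-hempel}) there exists $\al^*\in B$ with $F(\al^*)=0$, i.e. $\lambda_{m+k}\bigl(\Omega(\al^*,\be)\bigr)=\nu_k$ for $k=1,\dots,m$; the domain $\Omega:=\Omega(\al^*,\be)$ (or $\Omega\times(0,L)^{n-2}$ when $n\ge 3$) then has all the required properties. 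The main technical point is the \emph{uniform}-in-$\al$ norm resolvent estimate for $A_{\Omega(\al,\be)}-A_{\rm dec}(\al)$ over the compact box $B$ (together with the essentially routine verification that the slit-type walls do not destroy compactness of the embedding); once this is in hand, the rest is the eigenvalue bookkeeping above and the topological conclusion via Lemma~\ref{lemma-hempel}.
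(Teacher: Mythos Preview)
Your strategy is the same as the paper's---a finite chain of $m$ slit rooms, analysis of the decoupled operator, a small coupling perturbation, and then Lemma~\ref{lemma-hempel}---but you take a slightly more complicated route at two points, and one of them leaves a genuine gap.

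First, the paper does \emph{not} use passages. It connects adjacent rooms by a window $P_k^\eps=\{x_k\}\times(-\eps/2,\eps/2)$ in the common wall, so the decoupled operator ($\eps=0$) is simply $\bigoplus_{k=1}^m A_{R_k^{\al_k}}$, with no passage Laplacians at all. The continuity of $\eps\mapsto\lambda_j(\Omega^{\al,\eps})$ then comes directly from Theorem~\ref{th-contin} (and Remark~\ref{rem-contin}), which is proved in the appendix. Your passage construction forces you to invoke a ``uniform-in-$\al$ norm resolvent convergence of $A_{\Omega(\al,\be)}$ to $A_{\rm dec}(\al)$'' that you flag as the main technical point but do not prove; this is plausible but is not supplied anywhere in the paper, so as written it is a gap. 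The window construction sidesteps this entirely.

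Second, and related: the paper only needs continuity in $\eps$ at the $2m$ \emph{corner} points of the box, because it explicitly checks that each $f_k^\eps$ is nondecreasing in every argument (one line via min--max, since enlarging any $\al_j$ shrinks the form domain) and then applies Lemma~\ref{lemma-hempel} as stated. You instead establish sign conditions on entire \emph{faces} of $B$, which is stronger, but you then cite Lemma~\ref{lemma-hempel} without checking its monotonicity hypothesis. Your face conditions are actually the hypotheses of the Poincar\'e--Miranda theorem, so the conclusion is fine, but either add the one-line monotonicity check or cite Poincar\'e--Miranda instead. Note also that the paper's route (monotonicity + corner values) avoids the need for \emph{uniform} convergence over $B$: pointwise continuity at finitely many corners suffices.

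Your handling of $n\ge 3$ by taking a product with a small cube is fine; the paper simply says the higher-dimensional case is similar.
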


\begin{proof}

We consider the case $n=2$; the construction and the arguments in dimensions $n\geq 3$ are similar and left to the reader. 
The proof consists of several steps and
in principle the strategy is similar to the one in the proof of Theorem~\ref{th-HSS}.
More precisely, we fix an arbitrary open and bounded interval $\I\subset (0,\infty)$  containing all the points $\nu_k$.
First we consider the decoupled domain consisting of $m$ pairwise disjoint rooms
$R_k^{\al_k}=R_k\setminus W_k^{\al_k}$ stacked in a row (as before $R_k$ is a square and $W_k^{\al_k}$ is an internal wall in it). The first eigenvalue of the 
Neumann Laplacian on $R_k^{\al_k}$ is zero. 
Moreover, under a suitable choice of $d_k$ and $\al_k$
the second eigenvalue coincides with $\nu_k$  and the third eigenvalue 
is
larger than $\sup\I$. Consequently the first $m$ eigenvalues of the Neumann Laplacian 
on $\cup_{k=1}^m R_k^{\al_k}$ are zero, the next $m$ eigenvalues are $\nu_1,\dots,\nu_m$, and all further eigenvalues are contained in $(\sup\I,\infty)$.
Afterwards we connect the rooms by small windows of length $\eps$ (see Figure~\ref{fig4}); the resulting domain is denoted by $\Omega$. If $\eps$ is small 
the spectrum changes slightly. Namely, 
\begin{itemize}
\item the eigenvalues $\lambda_1(\Omega),\dots,\lambda_m(\Omega)$ remain in $[0,\inf\I)$,
\item the eigenvalues $\lambda_{m+1}(\Omega),\dots,\lambda_{2m}(\Omega)$ remain in small neighborhoods
of $\nu_1,\dots,\nu_m$, respectively, moreover they still belong to $\I$,
\item the rest of the spectrum remains in $(\sup\I,\infty)$. 
\end{itemize} 
It remains to establish the coincidence of 
$\lambda_{k+m}(\Omega)$ and $\nu_k$ as $k=1,\dots,m$ (so far they are only close as $\eps>0$ is sufficiently small),   
which is done by using a multidimensional version of mean value theorem from \cite{HKP97}. Roughly speaking, the shift of the eigenvalues 
that appears after inserting the small windows can be compensated by varying
the constants $\al_1,\dots,\al_m$ appropriately. 
In the following we implement this strategy.
\medskip

\noindent
{\it Step 1.} Fix some positive numbers $d_1,\dots,d_m$ and let $\al_1,\dots,\al_m$, and $\eps$ be nonnegative numbers satisfying  
\begin{gather}\label{epsi}
\al_k\in [0,d_k]\quad\text{and}\quad
\eps\in [0,\min_{k=1,\dots,m} d_k].
\end{gather}
In the following we shall use the notation $\al:=\{\al_1,\dots,\al_m\}$.
We introduce the domain $\Omega^{\al,\eps}$ consisting of $m$ modified rooms $R_k^{\al_k}$ being stacked in a row and connected through a small window
$P_k^\eps$ of length $\eps$. Each room $R_k^{\al_k}$ is obtained by removing from a square with side length $d_k$   
two additional walls of length $(d_k-\al_k)/2$; see Figure~\ref{fig4}.
More precisely, let $x_k:=\sum_{j=1}^{k}d_j$, define the rooms by 
\begin{gather*}
R_k^{\al_k}=\left(( x_k- d_k,x_k)\times\left(-{d_k\over 2},{d_k\over 2}\right)\right) \setminus W_k^{\alpha_k},
\end{gather*}
where the walls are  
\begin{gather*}
W_k^{\alpha_k}=\left\{(x,y)\in\mathbb{R}^2:\ x=x_k-{d_k\over 2},\ |y|\in \bigg[{\al_k\over 2},{d_k\over 2}\bigg]\right\}
\end{gather*}
and the windows are $P_k^\eps=\left\{x_k\right\}\times \left(-{\eps\over 2},{\eps\over 2}\right)$. Then the domain $\Omega^{\al,\eps}$ that we will use is defined as
$$\Omega^{\al,\eps}=\left(\cupl_{k=1}^m R^{\al_k}_k \right)\cup\left(\cupl_{k=1}^{m-1} P^\eps_k\right).$$
 \begin{figure}[h]
 \begin{picture}(100,80)
 \includegraphics[width=50mm]{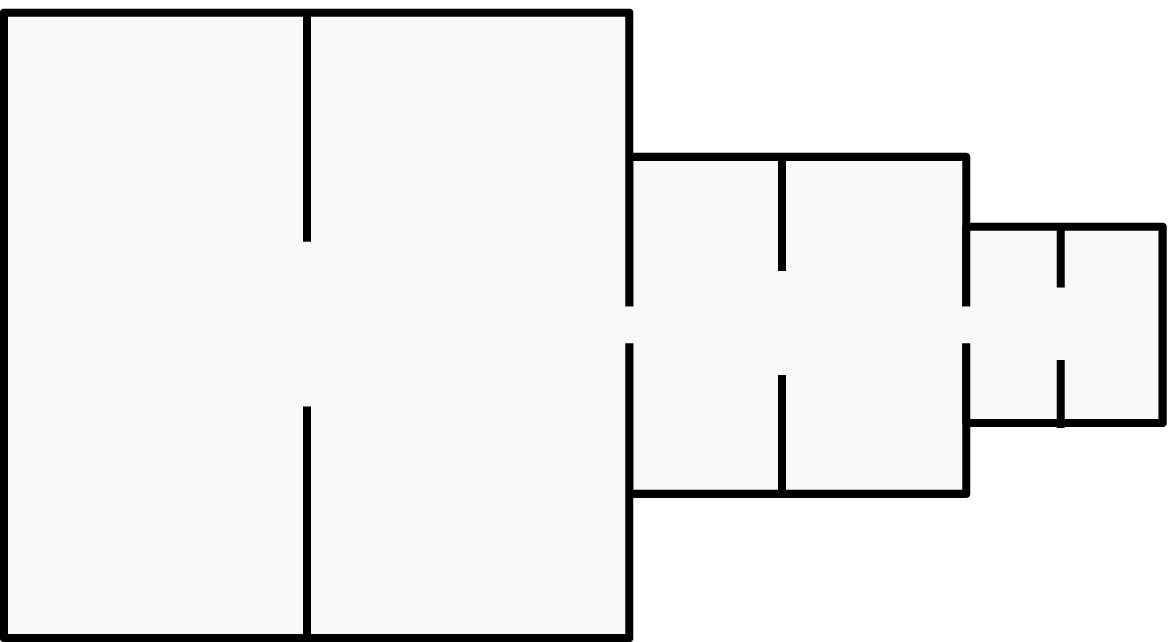}
 \put(-130,30){$R_1^{\al_1}$}
 \put(-100,36){\vector(0,-1){8}}
 \put(-100,35){\vector(0,1){14}}
 \put(-98,35){$\al_1$}
 
 \put(-63,46){\vector(0,-1){5}}
 \put(-63,32){\vector(0,1){5}}
 \put(-61,36){$\eps$}

 \end{picture}
 \caption{\label{fig4} Domain $\Omega^{\al,\eps}$ for $m=3$}
 \end{figure}

The Neumann Laplacian on $\Omega^{\al,\eps}$ will be denoted by $A_{\Omega^{\al,\eps}}$ and it is important to observe that for $\eps=0$
this self-adjoint operator in $\L(\Omega^{\al,\eps})$ decouples in a finite orthogonal sum of Neumann Laplacians $A_{R_k^{\al_k}}$ on the 
rooms $R_k^{\al_k}$, that is, 
one has 
\begin{equation}\label{orto+}
A_{\Omega^{\al,0}}=\bigoplus_{k=1}^m A_{R_k^{\al_k}}
\end{equation}
with respect to the corresponding space decomposition
$$
\L(\Omega^{\al,0})=\bigoplus_{k=1}^m \L(R_k^{\al_k}).
$$
Note that $\L(\Omega^{\al,0})=\L(\Omega^{\al,\eps})$ and hence $A_{\Omega^{\al,\eps}}$ and $A_{\Omega^{\al,0}}$ act in the same space for all $\eps$ in \eqref{epsi}. 

It is clear from \eqref{orto+} that the spectrum of the decoupled operator $A_{\Omega^{\al,0}}$ is the union of the
spectra of the Neumann Laplacians $A_{R_k^{\al_k}}$, $k=1,\dots,m$. 
We recall (see Step~2 of the proof of Theorem~\ref{th-HSS}) that the functions 
\begin{equation}\label{incre}
[0,d_k]\ni\al_k\mapsto \lambda_j(R_k^{ \al_k}),\quad j\in\N, 
\end{equation}
are continuous and nondecreasing. Moreover,
one has
\begin{equation}\label{lambda12+}
\lambda_1(R_k^{\al_k})=0,\quad\lambda_2(R_k^{ \al_k})=
\begin{dcases}
0,&\al_k=0,\\
(\pi / d_k )^2,&\al_k=d_k,
\end{dcases}
\end{equation}
and
\begin{equation}\label{lambda3+}
\lambda_3(R_k^{ \al_k})\ge \lambda_3(R_k^{0})= 
(\pi/d_k)^2.
\end{equation}

\noindent
{\it Step 2.}
Now we approach the main part of the proof, where the parameters will be properly adjusted. 
Let $0<\nu_1<\dots<\nu_m$ be as in the assumptions of the theorem and 
fix an open interval $\mathcal{I}$ such that 
\begin{gather}\label{Inu}
0<\inf \mathcal I<\nu_1\quad\text{and}\quad \nu_m<\sup\mathcal I. 
\end{gather}
Assume that the numbers $d_1,\dots,d_m$ 
satisfy
\begin{gather}\label{la<d}
\sup\mathcal I<\min_k\left(\pi/d_k\right)^2.
\end{gather}
Furthermore, let us choose a constant $\gamma>0$  such that the intervals 
$$ [\nu_k-\gamma,\nu_k+\gamma],\quad k=1,\dots,m,$$ 
are pairwise disjoint and  
\begin{gather}
\label{inI}
\bigcup_{k=1}^m [\nu_k-\gamma,\nu_k+\gamma]\subset\mathcal{I}
\end{gather}
holds. 
Next, we introduce the sets
\begin{equation}\label{interval}
L_k=\left\{\alpha_k\in[0,d_k]:\ \lambda_2(R_k^{ \al_k})\in [\nu_k-\gamma,\nu_k+\gamma]\right\}.
\end{equation}
Using the continuity and monotonicity of the function in \eqref{incre} and 
taking into account \eqref{lambda12+}, \eqref{Inu}-\eqref{inI}
we conclude that  each $ {L}_k$ is a nonempty compact interval.
We set 
$$\al_k^-=\min L_k,\quad \al_k^+=\max L_k,\quad\text{and}\quad \mathcal{D}=\prod_{k=1}^m[\al_k^-,\al_k^+].$$
It is clear from \eqref{lambda12+}, \eqref{Inu}, and \eqref{inI} that $\al_k^\pm>0$.
From now on we assume that
\begin{gather*}
\al=\{\al_1,\dots,\al_m\}\in \mathcal{D}.
\end{gather*}

As usual we denote 
the eigenvalues of the decoupled operator $A_{\Omega^{\al,0}}$ in \eqref{orto+} by $(\lambda_j(\Omega^{\al,0}))_{j\in\N}$, counted with multiplicities and ordered as a nondecreasing sequence.
It follows from $\lambda_1(R_k^{ \al_k})=0$ for $k=1,\dots,m$, that  
\begin{equation*}
\lambda_k(\Omega^{\al,0})=0\quad\text{for}\quad k=1,\dots,m.
\end{equation*}
Furthermore, \eqref{lambda3+}, \eqref{la<d}, \eqref{inI}, and \eqref{interval} imply that the $m+k$-th eigenvalues of the orthogonal sum $A_{\Omega^{\al,0}}$ coincides with the second eigenvalue 
$\lambda_{2}(R_k^{ \al_k})$ of the Neumann Laplacian $A_{R_k^{ \al_k}}$ as $k=1,\dots,m$:
\begin{gather}\label{lambda-all0}
\lambda_{m+k}(\Omega^{\al,0})=
\lambda_{2}(R_k^{ \al_k})\in \overline{B_\gamma(\nu_k)}\text{ for } k=1,\dots,m.
\end{gather}
Moreover, it is clear from \eqref{lambda3+} and \eqref{la<d} that
\begin{gather*}
\lambda_{k }(\Omega^{\al,0})>\sup\I \quad\text{for}\quad k= 2m+1,\,2m+2,\dots.
\end{gather*}

We introduce the functions $f_k^0:\mathcal{D}\to \R$ by 
\begin{gather}\label{f0}
f_k^0(\al_1,\al_2,\dots,\al_m)=\lambda_{k+m}(\Omega^{\al,0}),\quad k=1,\dots,m.
\end{gather}
It is important to note  that  due to \eqref{lambda-all0}
the value $\lambda_{k+m}(\Omega^{\al,0})$ of the function $f_k^0$ depends only on the $k$-th variable $\alpha_k$. Using this
and taking into account that the mapping \eqref{incre} is nondecreasing we get
\begin{multline}\label{Hempel0}
f_k^0(\al_1^+,\dots,\al_{k-1}^+,\al_k^-,\al_{k+1}^+,\dots,\al_m^+)\\
=\nu_k-\gamma
<\nu_k<\nu_k+\gamma
\\
=f_k^0(\al_1^-,\dots,\al_{k-1}^-,\al_k^+,\al_{k+1}^-,\dots,\al_m^-).
\end{multline}

\noindent
{\it Step 3.}
Let $(\lambda_j(\Omega^{\al,\eps}))_{j\in\N}$ be the eigenvalues 
of the Neumann Laplacian $A_{\Omega^{\al,\eps}}$ on $\Omega^{\al,\eps}$ counted with multiplicities and ordered
as a nondecreasing sequence. 

For $\eps\ge 0$ we introduce the  functions $f_k^\eps:\mathcal{D}\to \R$ by 
\begin{gather}\label{f}
f_k^\eps(\al_1,\al_2,\dots,\al_m)=\lambda_{k+m}(\Omega^{\al,\eps}),\quad k=1,\dots,m.
\end{gather}
Of course, for $\eps=0$ these functions coincide with the functions in \eqref{f0}.
Observe that, in contrast to $f_k^0$, for $\eps>0$ the values $\lambda_{k+m}(\Omega^{\al,\eps})$ of $f_k^\eps$ in general do not depend only on the $k$-th variable. 
It is important to note that Theorem~\ref{th-contin} and Remark~\ref{rem-contin} show that the functions
\begin{gather*}
\eps\mapsto \lambda_j(\Omega^{\al,\eps}),\quad j\in\N, 
\end{gather*}
are continuous for each fixed $\al$. Hence it follows together with \eqref{Hempel0} that 
\begin{equation}\label{Hempel}
\begin{split}
 f_k^\eps(\al_1^+,\dots,\al_{k-1}^+,\al_k^-,\al_{k+1}^+,\dots,\al_m^+)&<\nu_k\\
 &<f_k^\eps(\al_1^-,\dots,\al_{k-1}^-,\al_k^+,\al_{k+1}^-,\dots,\al_m^-)
 \end{split}
\end{equation}
for $\eps>0$ sufficiently small.
From now on we fix $\eps>0$ for which \eqref{Hempel} holds.

To proceed further we need  the following multidimensional version
of the intermediate value theorem, which was established in \cite[Lemma~3.5]{HKP97}.

\begin{lemma}[Hempel-Kriecherbauer-Plankensteiner, 1997]\label{lemma-hempel}
Let $a_k < b_k$, $k=1,\dots,m$, and $\mathcal{D}:=\prod_{k=1}^m[a_k, b_k]$. Assume
that $f:\mathcal{D}\to\R^m$ is continuous and that each component function $f_k$ is
nondecreasing in each of its arguments. Let us suppose that
$F_k^-<F_k^+$, $k=1,\dots,m$, where
\begin{gather*}
F_k^-=f_k(b_1,b_2,\dots,b_{k-1},a_k,b_{k+1},\dots,b_m),\\
F_k^+=f_k(a_1,a_2,\dots,a_{k-1},b_k,a_{k+1},\dots,b_m).
\end{gather*}
Then for any $F\in\prod_{k=1}^m[F_k^-,F_k^+]$
there exists  $x\in\mathcal{D}$ such that $f(x)=F$.
\end{lemma}\smallskip

We apply this lemma to the function 
$f^\eps=(f\e_1,\dots,f\e_m)$ defined by \eqref{f}.
By Theorem~\ref{th-contin} and Remark~\ref{rem-contin} $f^\eps:\mathcal{D}\to\R^m$ is continuous. Moreover, by the 
min-max principle each component $f^\eps_k$ of $f^\eps$ is nondecreasing in 
each of its arguments. 
Using this and \eqref{Hempel} we conclude that
$f^\eps$ satisfies all assumptions in Lemma~\ref{lemma-hempel} and 
hence there exists $\al\in\mathcal{D}$ such that
\begin{gather*}
f_k\e(  \al)=\nu_k.
\end{gather*}
With this choice of $\alpha=\{\alpha_1,\dots,\alpha_m\}$ and $d_1,\dots,d_m$ fixed as in the beginning of Step 2 (see \eqref{la<d}) it follows that
$\lambda_{m+k}(\Omega^{\alpha,\eps})=\nu_k$ for $k=1,\dots,m$. This completes  the proof of
Theorem~\ref{thCdV+}.  
\end{proof}

\section{Singular Schr\"odinger operators with $\delta'$-interactions\label{sec2}}

In this section we show that the methods used to control the spectrum of the Neumann Laplacian in the previous section 
can also be applied to singular Schr\"odinger operators describing the motion of quantum particles in 
potentials being supported at a discrete (finite or infinite) set of
points. These operators are often referred to as \textit{solvable models} in
quantum mechanics, since their mathematical and physical
quantities (e.g., their spectrum) can be determined explicitly. We refer to the monograph \cite{AGHH05} for an introduction to this topic. 
We also note that in the mathematical literature such operators are often called \textit{Schr\"odinger operators with point interactions}.  

The classical example of
a Schr\"odinger operator with \textit{$\delta$-interactions} is
the following formal expression
\begin{gather*}
 \ds-{\d^2 \over \d z^2} + \suml_{k\in \N}\al_k\delta_{z_k},
\end{gather*}
where $\delta_{z_k}$ are Dirac delta-functions supported at the points $z_k\in\R$ and $\al_k\in\R\cup\{\infty\}$. 
In the present paper we treat the closely related model of a Schr\"odinger operator with $\delta'$-interactions (or \textit{point dipole interactions})  
 defined by the formal expression
\begin{gather}\label{delta'}
-{\d^2\over \d z^2}+\suml_{k\in\N}\be_k\langle\cdot\,,\,\delta_{z_k}'\rangle\delta_{z_k}', 
\end{gather}
where $\delta_{z_k}'$ is the distributional derivative of the  delta-function supported at $z_k\in\R$,   $\langle\phi,\delta_{z_k}'\rangle$ denotes its action 
on the test function $\phi$, and $\be_k\in \R\cup\{\infty\}$.
The above formal expression can be realized  
as a self-adjoint operator in $\L$ with the action  $-{\d^2\over \d z^2}$ and
domain consisting of local $\H^2$-functions $u$ that satisfy
\begin{gather*}
u'(z_k -0) = u'(z_k  + 0),\quad u(z_k +0)-u(z_k -0)=\beta_k u'(z_k\pm 0 ) 
\end{gather*}
(the case $\beta=\infty$ stands for a decoupling with Neumann conditions at $z_k\pm 0$).
The existence of this model was pointed out by 
A.~Grossmann, R.~H{\o}egh-Krohn, M.~Mebkhout  in \cite{GHKM80}, the first rigorous mathematical treatment of $\delta'$-interactions is due to
F.~Gesztesy and H.~Holden in \cite{GH87}.
Among the numerous subsequent contributions we emphasize the more recent papers \cite{KM10b,KM14} by A.~Kostenko and M.M.~Malamud, in which also the 
more elaborate case $|z_k-z_{k-1}|\to 0$ as $|k|\to \infty$ was treated. 
In these papers self-adjointness,  lower semiboundedness and spectral properties of the underlying operators were studied in detail.

Our goal and strategy is similar to \cite{HSS91} in the context of the Neumann Laplacian: We wish to construct an 
operator of the form \eqref{delta'} with predefined essential spectrum; cf.~Theorem~\ref{th-HSS}.
At this point we present the main result of this section on a formal level
without giving a precise definition of  the underlying operator.  
This will be done during its proof; cf.~Theorem~\ref{th-BK+} for a more precise formulation of Theorem~\ref{th-BK}.

\begin{theorem} \label{th-BK}
Let $\S\subset [0,\infty)$ be an arbitrary closed set such that  $0\in \S$. Then there exists a bounded interval $(a,b)\subset\mathbb{R}$, a sequence of points 
$(z_k)_{k\in\N}$ in $(a,b)$, and a
sequence of positive numbers $(\beta_k)_{k\in\N}$ such that the operator $A_\beta $ in $\L(a,b)$ defined by  formal expression \eqref{delta'} satisfies 
\begin{gather*}
\sigma_\ess(A_\beta)=\S.
\end{gather*}
\end{theorem}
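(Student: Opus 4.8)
The plan is to follow closely the three-step strategy of the proof of Theorem~\ref{th-HSS}, with the rooms-and-passages domain replaced by a bounded interval partitioned into subintervals by $\delta'$-interactions, and with the movable wall $W_k^{\al_k}$ inside a room played by an \emph{additional} $\delta'$-interaction of tunable strength placed at the midpoint of each subinterval. More precisely, fix lengths $\ell_k>0$ with $\suml_{k\in\N}\ell_k<\infty$, set $b_0:=0$, $b_k:=\suml_{j=1}^k\ell_j$ and $b:=\suml_{j\in\N}\ell_j<\infty$, so that $(0,b)$ splits into the blocks $J_k:=(b_{k-1},b_k)$, and let $w_k$ be the midpoint of $J_k$. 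The interaction points $(z_k)_{k\in\N}$ will be an enumeration of the block endpoints $b_1,b_2,\dots$ together with the midpoints $w_1,w_2,\dots$, carrying strengths $\tau_k>0$ at $b_k$ (to be taken large) and $\gamma_k>0$ at $w_k$ (to be tuned). Following \cite{KM10b,KM14}, the operator $A_\beta$ is introduced via the closed nonnegative quadratic form
\[
\a_\beta[u]=\intl_0^b|u'|^2\,\d x+\suml_{k\in\N}\tau_k^{-1}\bigl|u(b_k+0)-u(b_k-0)\bigr|^2+\suml_{k\in\N}\gamma_k^{-1}\bigl|u(w_k+0)-u(w_k-0)\bigr|^2
\]
defined on the functions $u\in\L(0,b)$ that belong to $\H^1$ on each open subinterval between consecutive interaction points and satisfy $\a_\beta[u]<\infty$; the associated self-adjoint operator acts as $-u''$ and its domain consists of the local $\H^2$-functions obeying $u(z_k+0)-u(z_k-0)=\beta_k u'(z_k\pm 0)$. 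This is the precise meaning of \eqref{delta'} promised before Theorem~\ref{th-BK}, and it is legitimate -- in particular $A_\beta$ is self-adjoint and bounded from below -- even though $b_k\to b$, by the analysis of Kostenko and Malamud. We also introduce the operator $A_{\rm dec}$ defined by the same form with the middle sum (the penalties at the block endpoints) deleted; since that form no longer couples neighbouring blocks, $A_{\rm dec}=\bigoplus_{k\in\N}A_{J_k,\gamma_k}$, where $A_{J_k,\gamma_k}$ is the Laplacian on $J_k$ with Neumann conditions at $b_{k-1},b_k$ and a $\delta'$-interaction of strength $\gamma_k$ at $w_k$.

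\emph{Step~1.} On the common form domain one has $\a_\beta=\a_{\rm dec}+\mathfrak c$, where $\mathfrak c[u]=\suml_k\tau_k^{-1}|u(b_k+0)-u(b_k-0)|^2\ge0$ is a sum of rank-one forms. Using the elementary bound $|v(x)|^2\le 2\ell^{-1}\|v\|^2_{\L(I)}+\ell\|v'\|^2_{\L(I)}$, valid for $v\in\H^1(I)$ with $|I|=\ell$ and $x\in I$, on the two halves of each block, one obtains an estimate $\mathfrak c[u]\le C\suml_k\eta_k\bigl(\|u\|^2_{\L(J_k)}+\|u'\|^2_{\L(J_k)}\bigr)$ with coefficients $\eta_k$ depending only on $\tau_{k-1},\tau_k$ and $\ell_{k-1},\ell_k$. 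If the $\tau_k$ are taken large enough that $\sup_k\eta_k$ is small and $\eta_k\to0$ as $k\to\infty$, then $\mathfrak c$ is form-bounded with respect to $\a_{\rm dec}$ with bound less than $1$ (so that $A_\beta$ is well defined by a standard form-perturbation argument), and, splitting $\mathfrak c$ into a finite-rank part and a tail of arbitrarily small relative bound, one finds that the bounded operator representing $\mathfrak c$ in the form scale of $A_{\rm dec}$ is a norm limit of finite-rank operators, hence compact; consequently the resolvent difference $(A_\beta+\Id)^{-1}-(A_{\rm dec}+\Id)^{-1}$ is compact. We fix such a sequence $(\tau_k)$; Weyl's theorem (see, e.g., \cite[Theorem~XIII.14]{RS78}) then gives $\sigma_\ess(A_\beta)=\sigma_\ess(A_{\rm dec})$, and it remains to prescribe $\sigma_\ess(A_{\rm dec})$. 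I expect this to be the main obstacle: giving \eqref{delta'} a rigorous meaning with interaction points accumulating at $b$ (self-adjointness and semiboundedness, which rest on \cite{KM10b,KM14}) and choosing the growth of $(\tau_k)$ quantitatively large enough both require care. The form manipulations involved are of the type collected in Appendix~\ref{appb}.

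\emph{Step~2.} For fixed $k$ the block form $\a_{J_k,\gamma}[u]=\int_{J_k}|u'|^2\,\d x+\gamma^{-1}|u(w_k+0)-u(w_k-0)|^2$ is nonincreasing in $\gamma$, so by the min-max principle every eigenvalue $\lambda_j(A_{J_k,\gamma})$ is nonincreasing in $\gamma$, and $\lambda_j(A_{J_k,\gamma})\ge\lambda_j(A_{J_k,\infty})$, where $A_{J_k,\infty}$ is the orthogonal sum of two Neumann Laplacians on intervals of length $\ell_k/2$, with eigenvalues $0,0,(2\pi/\ell_k)^2,(2\pi/\ell_k)^2,\dots$. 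Hence $\lambda_1(A_{J_k,\gamma})=0$ and $\lambda_3(A_{J_k,\gamma})\ge(2\pi/\ell_k)^2$ for every $\gamma$. Testing with the two-dimensional space of functions that are constant on each half of $J_k$ gives $\lambda_2(A_{J_k,\gamma})\le 4/(\gamma\ell_k)\to0$ as $\gamma\to\infty$, while monotone (increasing) convergence of the forms as $\gamma\to0$ yields $\lambda_2(A_{J_k,\gamma})\to\lambda_2(A_{J_k,0})=(\pi/\ell_k)^2$, the second eigenvalue of the plain Neumann Laplacian $A_{J_k,0}$ on $J_k$. Since $\gamma\mapsto\lambda_2(A_{J_k,\gamma})$ is also continuous on $(0,\infty)$ (the forms have fixed domain and $\gamma^{-1}$ depends continuously on $\gamma$; cf.~Appendix~\ref{appb}), this map is a continuous nonincreasing surjection of $(0,\infty)$ onto $(0,(\pi/\ell_k)^2)$.

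\emph{Step~3.} Exactly as in Step~3 of the proof of Theorem~\ref{th-HSS}, choose positive numbers $(s_k)_{k\in\N}$ with $\acc((s_k)_{k\in\N})=\S\setminus\{0\}$ if $0$ is an isolated point of $\S$ and $\acc((s_k)_{k\in\N})=\S$ otherwise, and then fix the lengths $\ell_k>0$ both summable and small enough that $(\pi/\ell_k)^2>s_k$. By Step~2 there is $\gamma_k\in(0,\infty)$ with $\lambda_2(A_{J_k,\gamma_k})=s_k$. For each $k$ the block operator $A_{J_k,\gamma_k}$ then has eigenvalues $0$, $s_k$, and a remainder contained in $[(2\pi/\ell_k)^2,\infty)$; since $\ell_k\to0$ these remainders tend to $+\infty$ and have no finite accumulation point, the zeros make $0$ an eigenvalue of $A_{\rm dec}$ of infinite multiplicity (and $0\in\S$), and the $s_k$ contribute precisely the accumulation set $\acc((s_k)_{k\in\N})$. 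Therefore $\sigma_\ess(A_{\rm dec})=\{0\}\cup\acc((s_k)_{k\in\N})=\S$, and together with Step~1 this gives $\sigma_\ess(A_\beta)=\S$, which is the assertion of Theorem~\ref{th-BK}.
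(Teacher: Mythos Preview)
Your proof follows essentially the same strategy as the paper's: the same block decomposition with midpoint $\delta'$-interactions playing the role of walls, the same decoupled operator (the paper's $A_{\infty,q}$), the same eigenvalue analysis on each block (the paper's Lemma~\ref{lemma:EVprop} and \eqref{13+}--\eqref{2+}), and the same compactness-of-resolvent-difference argument to transfer the essential spectrum via Weyl's theorem. The paper carries out your Step~1 by an explicit factorization $T_{p,q}=-(\Gamma_\infty)^*\Gamma_p$ of the resolvent difference and shows $\Gamma_p$ is a norm limit of finite-rank operators under the condition $\max\{(\tau_k\ell_k)^{-1},(\tau_k\ell_{k+1})^{-1}\}\to 0$, which is precisely your $\eta_k\to 0$; otherwise the arguments match, only the order of the steps differs.
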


\begin{proof}
For the construction of the self-adjoint 
Schr\"odinger operator with $\delta'$-interactions we use a similar idea as in the construction of the
rooms-and-passages domains in the previous section. 
Here we split the sequence of points  $(z_k)_{k\in\N}$ in \eqref{delta'} in two interlacing
subsequences $(x_k)_{k\in\N}$ and $(y_k)_{k\in\N}$, where the point $y_k$ is in the middle of $(x_{k-1},x_k)$,
and instead of $\beta_k$ we denote the interaction strengths at the points $x_k$ by $p_k$ and at the points $y_k$ by $q_k$. 
Instead of $A_\beta$ we shall write $A_{p,q}$ for the corresponding Schr\"{o}dinger operator with $\delta'$-interactions, see Step 3 of the proof.
The intervals $(x_{k-1},x_k)$ will  play  the role of the rooms, the interactions at the points $x_k$ will play  the role of 
the passages, and the interactions at the points $y_k$ will play the role of the additional walls inside the rooms.   
\medskip

As in the proof of Theorem~\ref{th-HSS} we fix a sequence $(s_k)_{k\in \N}$
such that 
\begin{gather}\label{s-assumpt2}
s_k> 0\text{ and }\acc((s_k)_{k\in \N})=
\begin{cases}
\S\setminus\{0\},&0\text{ is an isolated point of }\S,\\
\S,&\text{otherwise},
\end{cases}
\end{gather}
and for each $k\in\N$ we choose numbers $d_k>0$ that satisfy
\begin{gather}\label{dk}
s_k < (\pi/ d_k)^2.
\end{gather} 
Moreover, we can assume that $d_k$ are chosen sufficiently small, so that 
\begin{gather}\label{dk+}
\suml_{k\in \N} d_k <\infty
\end{gather}
and hence
\begin{gather}\label{dk-infty}
\lim_{k\to\infty} d_k=0
\end{gather} 
holds.
Finally, we set 
\begin{gather*}
x_0=0,\quad x_k=x_{k-1}+d_k,\quad 
y_k={x_{k-1}+x_k\over 2},\quad \I_k=(x_{k-1},x_k),\,\, k\in\N,
\end{gather*}
and we consider the interval $(a,b)$, where
\begin{gather*}
a=x_0=0\quad\text{and}\quad b=\suml_{k\in \N} d_k.
\end{gather*}

The proof consists of four steps. In the first step we discuss the spectral properties of the Schr\"odinger operator $\Ak_{q_k,\I_k}$ 
on the interval $\I_k$ with a $\delta'$-interaction of strength $q_k>0$ at $y_k$
and Neumann boundary conditions at the endpoints of $\I_k$.
In the second step we consider the direct sum of these operators:
$$A_{\infty,q}=\bigoplus_{k\in\N}\Ak_{q_k,\I_k}.$$ 
Note that the Neumann conditions at $x_k\pm 0 $ can be regarded as $\delta'$-interaction with \textit{infinite} strength.
Thus $A_{\infty,q}$ corresponds to the Schr\"odinger operator on $(a,b)$
with $\delta'$-interactions of strengths $q_k$ at the points $y_k$ and $\delta'$-interactions of strengths $\infty$ at the points $x_k$.
The interaction strengths $q_k$ will be adjusted in such a way that 
the essential spectrum of $A_{\infty,q}$ coincides with $\S$.
In fact, $\sigma_{\ess}(A_{\infty,q})$ is the union of the point $0$ and all 
accumulation points of a sequence formed by the \textit{second} eigenvalues
of $\Ak_{q_k,\I_k}$.
In the third step we perturb the decoupled operator $A_{\infty,q}$ linking the intervals $\I_{k+1}$ and $\I_k$
by a $\delta'$-interaction of a sufficiently large strength $p_k>0$ for all $k\in\N$;
the corresponding operator is denoted by $A_{p,q}$. We will prove in the last step that 
the essential spectra of $A_{p,q}$ and $A_{\infty,q}$ coincide if the interaction strengths
$p_k$ tend to $\infty$ for $k\to\infty$ sufficiently fast.
\medskip

\noindent\textit{Step~1.}
Let $q_k\in (0,\infty]$ and let $\ak_{q_k,\I_k}$ be the sesquilinear form in $\L(\I_k)$
defined by
\begin{equation*}
\begin{split}
\ak_{q_k,\I_k}[\u,\vv]&=\int_{\I_k}\u'\cdot \overline{\vv'}\,\d x \\
&\qquad + \frac{1}{q_k} 
\left(\u(y_k+0)-\u(y_k-0)\right)\overline{\left(\vv(y_k+0)-\vv(y_k-0)\right)},\\
\dom(\ak_{q_k,\I_k})&=\H^1(\I_k\setminus\{y_k\});
\end{split}
\end{equation*}
for $q_k=\infty$ we use the convention $\infty^{-1}=0$.
The form $\ak_{q_k,\I_k}$ is densely defined, nonnegative, and closed in $\L(\I_k)$. Hence 
by the first representation theorem there is a unique nonnegative 
self-adjoint operator $\Ak_{q_k,\I_k}$ in $\L(\I_k)$ such that 
$\dom(\Ak_{q_k,\I_k})\subset\dom(\ak_{q_k,\I_k})$ and
\begin{equation*}
(\Ak_{q_k,\I_k} \u, \vv)_{\L(\I_k)}=\ak_{q_k,\I_k}[\u,\vv],\quad \u\in \dom(\Ak_{q_k,\I_k}),\,\,\vv\in\dom(\ak_{q_k,\I_k}).
\end{equation*}
Integration by parts shows that
$\dom(\Ak_{q_k,\I_k})$ consists of all those functions $\u\in \H^2(\I_k\setminus\{y_k\})$ that satisfy
the $\delta'$-jump condition
\begin{equation*}
\u'(y_k -0) = \u'(y_k  + 0)=\frac{1}{q_k}\left( \u(y_k +0)-\u(y_k -0)\right)
\end{equation*}
at the point $y_k$ and Neumann boundary conditions
\begin{equation*}
\u'(x_{k-1}) = \u'(x_k)=0
\end{equation*}
at the endpoints of $\I_k$. Furthermore,
the action of $\Ak_{q_k,\I_k}$ is given by
$$
(\Ak_{q_k,\I_k}\u )\!\restriction_{(x_{k-1},y_k)}= -\left( \u\!\restriction_{(x_{k-1},y_k)}\right)'',\quad
(\Ak_{q_k,\I_k}\u )\!\restriction_{(y_k,x_{k})}= -\left( \u\!\restriction_{(y_k,x_k)}\right)''.
$$ 
The spectrum of the self-adjoint operator $\Ak_{q_k,\I_k}$ is purely discrete. We use the notation
$\left\{\lambda_j(\Ak_{q_k,\I_k})\right\}_{j\in\N}$ for the corresponding eigenvalues counted with multiplicities and ordered
as a nondecreasing sequence. Some properties of these eigenvalues are collected in the next lemma. Here we will also make use of the Neumann Laplacian
on $\I_k$, defined as usual via the form 
\begin{equation}\label{neum22}
\ak_{0,\I_k}[\u,\vv]=(\u',\vv')_{\L(\I_k)},\quad \dom(\ak_{0,\I_k})=\H^1(\I_k),
\end{equation}
and we shall denote this operator by $\Ak_{0,\I_k}$. To avoid possible confusion we emphasize that the form domain $\H^1(\I_k)$
of the Neumann Laplacian $\Ak_{0,\I_k}$ is smaller than the form domain $\H^1(\I_k\setminus\{y_k\})$ of the 
operators $\Ak_{q_k,\I_k}$ with $q_k\in (0,\infty]$. Furthermore, we mention already here that the self-adjoint operator $\Ak_{\infty,\I_k}$ is the direct sum of the 
Neumann Laplacians on the intervals $(x_{k-1},y_k)$ and $(y_k,x_k)$.

\begin{lemma}\label{lemma:EVprop}
For each $j\in\N$ 
\begin{equation}\label{cont-monot}
\begin{array}{l}
\text{the function }(0,\infty]\ni q_k\mapsto \lambda_j(\Ak_{ q_k,\I_k})\text{ is}\\
\text{monotonically decreasing and continuous,}
\end{array}
\end{equation}
and one has
\begin{equation}\label{0} 
\liml_{q_k\to +0}\lambda_j(\Ak_{ q_k,\I_k})=
\lambda_j(\Ak_{0,\I_k}).
\end{equation}
\end{lemma}

\begin{proof}[Proof of Lemma~\ref{lemma:EVprop}]
The monotonicity of the function \eqref{cont-monot} 
follows from the min-max principle and the monotonicity of the function
$q_k\mapsto \ak_{q_k,\I_k}[\u,\u]$
for each fixed $\u\in\H^1(\I_k)$.
To prove the continuity of the function \eqref{cont-monot} consider $q_k,\widehat q_k\in (0,\infty]$, 
$\mathbf{f},\mathbf{g}\in\L(\I_k)$, and set $\u=(\Ak_{q_k,\I_k}+\Id)^{-1}\mathbf{f}$ and 
$\vv=(\Ak_{\widehat q_k,\I_k}+\Id)^{-1}\mathbf{g}$. Then we have
\begin{equation}\label{res:diff}
\begin{split}
 &\bigl((\Ak_{q_k,\I_k}+\Id)^{-1}\mathbf{f}-(\Ak_{\widehat q_k,\I_k}+\Id)^{-1}\mathbf{f},\mathbf{g}\bigr)_{\L(\I_k)}\\
 &\quad=\bigl(\u,(\Ak_{\widehat q_k,\I_k}+\Id)\vv\bigr)_{\L(\I_k)}-\bigl((\Ak_{q_k,\I_k}+\Id)\u, \vv\bigr)_{\L(\I_k)}\\
 &\quad=\a_{\widehat q_k,\I_k}[\u,\vv]-\a_{q_k,\I_k}[\u,\vv]\\
 &\quad=\left(\frac{1}{\widehat q_k}-\frac{1}{q_k}\right)
\left(\u(y_k+0)-\u(y_k-0)\right)\overline{\left(\vv(y_k+0)-\vv(y_k-0)\right)}
\end{split}
 \end{equation}
With $\I_k^+=(y_k,x_k)$ and $\I_k^-=(x_{k-1},y_k)$ one has 
the standard trace estimate (see, e.g. \cite[Lemma 1.3.8]{BKbook}) 
\begin{gather*}
|\u(y_k\pm 0)|^2\leq 
\frac{d_k}{2}\|\u'\|^2_{\L(\I^\pm_k)}+
\frac{4}{d_k}\|\u\|^2_{\L(\I^\pm_k)},\quad \u\in\H^1(\I^\pm_k),
\end{gather*}
and with $C_k=\max\{d_k,8 d_k^{-1}\}$ we estimate
\begin{equation}\label{okpr}
 \begin{split}
  \bigl\vert\u(y_k+0)-\u(y_k-0) \bigr\vert^2 &\leq 2  \vert\u(y_k+0)\vert ^2+  2  \vert\u(y_k-0)\vert ^2\\
 &\leq  d_k\|\u'\|^2_{\L(\I_k\setminus\{y_k\})}+
8d^{-1}_k\|\u\|^2_{\L(\I_k)}\\
&\leq C_k \bigl(\ak_{q_k,\I_k}[\u,\u]+\|\u\|^2_{\L(\I_k)}\bigr)\\
&= C_k \bigl((\Ak_{q_k,\I_k}+\Id)\u, \u\bigr)_{\L(\I_k)}\\
&= C_k \bigl(\mathbf{f},(\Ak_{q_k,\I_k}+\Id)^{-1}\mathbf{f}\bigr)_{\L(\I_k)}\\
& \leq C_k \|\mathbf{f}\|_{\L(\I_k)} \|(\Ak_{q_k,\I_k}+\Id)^{-1}\mathbf{f}\|_{\L(\I_k)}\\
& \leq C_k \|\mathbf{f}\|^2_{\L(\I_k)},
 \end{split}
\end{equation}
where we have used $q_k>0$ in the third estimate. 
In the same way we get $\vert\vv(y_k+0)-\vv(y_k-0)\vert^2\leq C_k \|\mathbf{g}\|^2_{\L(\I_k)}$. Hence \eqref{res:diff}
leads to the estimate
\begin{equation*}
\begin{split}
 \bigl|\bigl((\Ak_{q_k,\I_k}+\Id)^{-1}\mathbf{f}&-(\Ak_{\widehat q_k,\I_k}+\Id)^{-1}\mathbf{f},\mathbf{g}\bigr)_{\L(\I_k)}\bigr| \\
 &\qquad\leq C_k 
 \left|\frac{1}{\widehat q_k}-\frac{1}{q_k}\right| \|\mathbf{f}\|_{\L(\I_k)} \|\mathbf{g}\|_{\L(\I_k)},
\end{split}
 \end{equation*}
and from this we conclude 
\begin{gather}\label{nrc}
\|(\Ak_{q_k,\I_k}+\Id)^{-1} -(\Ak_{\widehat q_k,\I_k}+\Id)^{-1}\|\to 0
\text{ as }\widehat q_k\to q_k.
\end{gather}
It is well-known (see, e.g., \cite[Corollary~A.15]{P06}) that the  norm-resolvent convergence in \eqref{nrc} implies the convergence of the eigenvalues, namely
for each $j\in\N$ we obtain 
$$\lambda_j(\Ak_{\widehat q_k,\I_k})\to  \lambda_j(\Ak_{ q_k,\I_k})
\quad\text{as}\quad\widehat q_k\to q_k,$$
and hence the function in \eqref{cont-monot} is continuous.

It remains to prove \eqref{0}. For this we will use Theorem~\ref{th-s+} from Appendix~\ref{appb}.
Note first that
the set 
$$\left\{\u\in \H^1(\I_k\setminus\{y_k\})=\dom(\ak_{ q_k,\I_k}):\ \sup_{q_k>0}\ak_{ q_k,\I_k}[\u,\u]<\infty\right\}$$ 
coincides with the form domain $\dom(\ak_{0,\I_k})=\H^1(\I_k)$ of the Neumann Laplacian in \eqref{neum22}. Moreover, 
for each $\u,\vv$ from this set one has $$\lim_{q_k\to 0}\ak_{q_k,\I_k}[\u,\vv]=\ak_{0,\I_k}[\u,\vv].$$
Since the spectra of the operators $\Ak_{q_k,\I_k}$ and $\Ak_{0,\I_k}$ are purely discrete 
Theorem~\ref{th-s+} shows \eqref{0}. 
\end{proof}

Now we return to the spectral properties of the self-adjoint operators $\Ak_{ q_k,\I_k}$. In particular, the eigenvalues 
of the Neumann Laplacian $\Ak_{0,\I_k}$ on $\I_k$ and the direct sum of the Neumann Laplacians $\Ak_{\infty,\I_k}$ 
on $(x_{k-1},y_k)$ and $(y_k,x_k)$ can be easily calculated. For our purposes it suffices to note that
\begin{gather}\label{1}
\lambda_1(\Ak_{0,\I_k})=\lambda_1(\Ak_{\infty,\I_k})=0,\\\label{2}
\lambda_2(\Ak_{0,\I_k})=\left({\pi/ d_k}\right)^2,\quad \lambda_2(\Ak_{\infty,\I_k})=0,\\
\label{3}
\lambda_3(\Ak_{\infty,\I_k})=\left({2\pi/ d_k}\right)^2.
\end{gather}
It follows from \eqref{cont-monot}, \eqref{1}, and \eqref{3} that 
for any $q_k\in(0,\infty]$ we have
\begin{gather}\label{13+}
\lambda_1(\Ak_{q_k,\I_k})=0,\quad 
\lambda_3(\Ak_{q_k,\I_k})\geq\left({2\pi/ d_k}\right)^2.
\end{gather}
Also, using \eqref{cont-monot}, \eqref{0}, \eqref{2}  
and taking into account that $0<s_k < (\pi/ d_k)^2$ from \eqref{dk} we conclude that there exists $q_k> 0$ such that
\begin{gather}
\label{2+}
\lambda_2(\Ak_{q_k,\I_k})=s_k,\quad k\in\N.
\end{gather}
From now on we fix $q_k>0$ for which \eqref{2+} holds.
\medskip

\noindent\textit{Step 2}.
Now we consider the direct sum  
\begin{equation}\label{ai8}
A_{\infty,q}=\bigoplus_{k\in\N} \Ak_{q_k,\I_k}
\end{equation}
of the nonnegative self-adjoint operators $\Ak_{q_k,\I_k}$ in the space
$$
\L(a,b)=\bigoplus_{k=1}^\infty \L(\I_k).
$$
In a more explicit form $A_{\infty,q}$ is given by
\begin{equation*}
\begin{split}
(A_{\infty,q}u)\!\restriction_{\I_k}&=\Ak_{q_k,\I_k}\u_k,\\
\dom(A_{\infty,q})&=
\biggl\{
u\in\L(a,b):\ \u_k\in\dom(\Ak_{q_k,\I_k}),\\
&\qquad\qquad\qquad\qquad
\suml_{k\in\N}\|\Ak_{q_k,\I_k}\u_k\|^2_{\L(\I_k)}<\infty
\biggr\},
\end{split}
\end{equation*}
where $\u_k:=u\!\restriction_{\I_k}$ stands for 
the restriction of the function $u$ onto the interval $\I_k$. Note that the corresponding sesquilinear form $\a_{\infty,q}$ associated 
 with $A_{\infty,q}$ is 
\begin{equation*}
\begin{split}
\ak_{\infty,q}[u,v]&=\suml_{k\in\N}\ak_{q_k,\I_k}[\u_k,\vv_k],\\
\dom(\a_{\infty,q})&=
\left\{
u\in\L(a,b):\ \u_k\in\dom(\ak_{q_k,\I_k}),\
\suml_{k\in\N}\ak_{q_k,\I_k}[\u_k,\u_k] <\infty
\right\}.
\end{split}
\end{equation*}
It is clear that the operator $A_{\infty,q}$ in \eqref{ai8} is self-adjoint and nonnegative in $\L(a,b)$. 
Furthermore, it is not difficult to check that
\begin{gather*} 
\sigma_\ess(A_{\infty,q})
=
\acc\big((\lambda_j(\Ak_{q_k,\I_k}))_{j,k\in\N}\big)
\end{gather*}
holds. Taking into account
that $0\in \S$ and using \eqref{s-assumpt2}, \eqref{dk-infty}, \eqref{13+}, \eqref{2+}, we arrive at
\begin{gather*}
\sigma_\ess(A_{\infty,q})
=  
\{0\}\cup\acc\big((s_k)_{k\in\N}\big)
= \S.
\end{gather*} 

\noindent
\textit{Step~3.} 
In this step we perturb the decoupled operator $A_{\infty,q}$ linking the intervals $\I_{k+1}$ and $\I_k$
by a $\delta'$-interaction of sufficiently large strength $p_k>0$ for all $k\in\N$.
The corresponding self-adjoint operator will be denoted by $A_{p,q}$. 
More precisely, for $p_k>0$, $k\in\N$, we consider the sesquilinear form
$\a_{p,q}$
\begin{equation*}
\begin{split}
\a_{p,q}[u,v]&=\suml_{k\in\N}\ak_{q_k,\I_k}[\u_k,\vv_k]\\ 
&\quad +\suml_{k\in\N} \frac{1}{p_k} 
\left(u(x_k+0)-u(x_k-0)\right)
\overline{\left(v(x_k+0)-v(x_k-0)\right)},\\
\dom(\a_{p,q})&=\left\{u\in \L(a,b):\ \u_k\in\dom(\ak_{q_k,\I_k}),\, \a_{p,q}[u,u]<\infty\right\},
\end{split}
\end{equation*}
in $\L(a,b)$. This form is nonnegative and densely defined in $\L(a,b)$.
Moreover, the form is closed by \cite[Lemma~2.6]{KM14} and the corresponding nonnegative self-adjoint operator $A_{p,q}$ 
is given by 
\begin{equation*}
\begin{split}
(A_{p,q}u)\!\restriction_{(a,b)\setminus\mathcal{Z}}&=-(u\!\restriction_{(a,b)\setminus\mathcal{Z}})'',\\
\dom(A_{p,q})&=\bigg\{u\in\H^2((a,b)\setminus\mathcal{Z}):\ u'(a)=0,\\
&\qquad u'(y_k +0) = u'(y_k  - 0)=\frac{1}{q_k}\left( u(y_k +0)-u(y_k -0)\right),\\
&\qquad u'(x_k +0) = u'(x_k  - 0)=\frac{1}{p_k}\left( u(x_k +0)-u(x_k -0)\right)\bigg\},
\end{split}
\end{equation*}
where $\mathcal{Z}=\left\{x_k:k\in\N\right\}\cup\left\{y_k: k\in\N\right\}$; cf.~\cite[Lemma~2.6 and Proposition~2.1]{KM14}.
\medskip 
Now consider  
$$
\rho_k:=\max\left\{{1\over p_k d_k},\ {1\over p_kd_{k+1}}\right\},\quad k\in\N,
$$
and assume that 
\begin{gather}\label{rho}
 \rho_k\to 0\text{ as }k\to\infty.
 \end{gather} 

\noindent
\textit{Step~4.}  
In this step we verify 
\begin{equation}\label{kmt}
\sigma_\ess(A_{p,q})=\sigma_\ess(A_{\infty,q}).
\end{equation}
by showing that 
the  difference of resolvents
$$T_{p,q}:=(A_{p,q} + \Id)^{-1}-(A_{\infty,q} + \Id)^{-1}$$
is a compact operator. 
Then \eqref{kmt} is an immediate consequence of
the Weyl theorem, see, e.g.  \cite[Theorem XIII.14]{RS78}. {  We remark that in a similar situation a related perturbation result 
and the invariance of the essential spectrum was shown in \cite[Theorem~1.3]{KM14}}.

In the following let $\kappa_n=\sup_{k\in [n,\infty)\,\cap\,\N}\rho_k$. Then 
it follows from \eqref{rho} that
\begin{gather}
\label{kappa}
\kappa_n<\infty\text{ for each }n\in\N\text{\quad and\quad }
\kappa_n\to 0\text{ as }n\to\infty. 
\end{gather}

In a first step we claim that
\begin{gather}\label{domdom+}
\dom(\a_{\infty,q})=\dom(\a_{p,q}).
\end{gather}
In fact, the inclusion $\dom(\a_{p,q})\subset\dom(\a_{\infty,q})$
follows directly from the definition of the above form domains.
To prove the reverse inclusion
 we have to show that
\begin{gather}\label{sum:infty}
\suml_{k\in\N}\frac{1}{p_k}
\left|u(x_k+0)-u(x_k-0)\right|^2<\infty
\end{gather}
for 
$u\in\dom(\a_{\infty,q})$.
Using the standard trace estimates (see, e.g. \cite[Lemma~1.3.8]{BKbook}) 
\begin{equation*} 
 \begin{split}
|u(x_k+0)|^2&\leq d_{k+1}\|u'\|^2_{\L(\I_{k+1})}+ \frac{2}{d_{k+1}}\|u\|^2_{\L(\I_{k+1})},\\
|u(x_k-0)|^2&\leq d_{k}\|u'\|^2_{\L(\I_k)}+ \frac{2}{d_k}\|u\|^2_{\L(\I_k)},
 \end{split}
\end{equation*}
and taking into account that $\sup_{k\in\N}d_k<b-a$ and $q_k>0$  we obtain
\begin{equation}\label{form:encl}
\begin{split}
&\suml_{k\in\N}\frac{1}{p_k}
\left|u(x_k+0)-u(x_k-0)\right|^2 \\
&\quad \leq 2\suml_{k\in\N}\frac{1}{p_k}|u(x_k+0)|^2+2\suml_{k\in\N}\frac{1}{p_k}|u(x_k-0)|^2\\
&\quad 
\leq 2\suml_{k\in\N}\frac{1}{p_k}\left(
d_{k+1}\|u'\|^2_{\L(\I_{k+1})} + \frac{2}{d_{k+1}}\|u\|^2_{\L(\I_{k+1})} \right)\\ 
&\qquad\qquad + 2\suml_{k\in\N}\frac{1}{p_k}\left(
d_k\|u'\|^2_{\L(\I_k)} + \frac{2}{d_k}\|u\|^2_{\L(\I_k)} \right)\\ 
&\quad\leq 2\kappa_1  \suml_{k\in\N} d_{k+1}^2\|u'\|^2_{\L(\I_{k+1})} +  4\kappa_1  \suml_{k\in\N} \|u\|^2_{\L(\I_{k+1})} \\
&\qquad\qquad +2\kappa_1  \suml_{k\in\N} d_{k}^2\|u'\|^2_{\L(\I_{k})} +  4\kappa_1  \suml_{k\in\N} \|u\|^2_{\L(\I_{k})} \\
&\quad \le  
4\kappa_1 (b-a)^2  \|u'\|^2_{\L(a,b)}+8\kappa_1  \|u\|^2_{\L(a,b)}\\
&\quad\leq
4\kappa_1 (b-a)^2\a_{\infty,q}[u,u]+8\kappa_1  \|u\|^2_{\L(a,b)},
\end{split}
\end{equation}
and thus \eqref{sum:infty} holds. We have shown \eqref{domdom+}.

Now let $f,g\in\L(a,b)$ be arbitrary and consider the functions 
\begin{equation*}
\begin{split}
u&=(A_{p,q}+\Id)^{-1}f\in \dom(A_{p,q})\subset\dom(\a_{p,q}),\\
v&=(A_{\infty,q}+\Id)^{-1}g\in \dom(A_{\infty,q})\subset\dom(\a_{\infty,q}).
\end{split}
\end{equation*}
Using \eqref{domdom+}
and the fact that $(A_{\infty,q}+\Id)^{-1}$ is a self-adjoint operator  we get
\begin{equation}\label{res-dif}
\begin{split}
\left(T_{p,q} f,g\right)_{\L(a,b)}&=
\left((A_{p,q}+\Id)^{-1}f - (A_{\infty,q}+\Id)^{-1} f,g\right)_{\L(a,b)}\\
&=
\left(u,(A_{\infty,q}+\Id)v\right)_{\L(a,b)}-\left((A_{p,q}+\Id)u, v\right)_{\L(a,b)}\\
&=\a_{\infty,q}[u,v] - \a_{p,q}[u,v]\\
&=-\suml_{k\in\N}\frac{1}{p_k} (u(x_k+0)-u(x_k-0))\overline{(v(x_k+0)-v(x_k-0))}.
\end{split}
\end{equation}
Next we introduce the operators $\Gamma_{p},\Gamma_{\infty}:\L(a,b)\to l^2(\N)$  defined by
\begin{equation*}
\begin{split}
(\Gamma_{p} f)_k&:={((A_{p,q}+\Id)^{-1}f)(x_k+0)-((A_{p,q}+\Id)^{-1}f)(x_k-0)\over \sqrt{p_k}},\\
(\Gamma_{\infty} g)_k&:={((A_{\infty,q}+\Id)^{-1}g)(x_k+0)-((A_{\infty,q}+\Id)^{-1}g)(x_k-0)\over \sqrt{p_k}},
\end{split}
\end{equation*}
on their natural domains
\begin{equation*}
\begin{split}
\dom(\Gamma_{p})&=\left\{f\in\L(a,b):\ \Gamma_{p} f\in l^2(\N)\right\},\\
\dom(\Gamma_{\infty})&=\left\{g\in\L(a,b):\ \Gamma_{\infty} g\in l^2(\N)\right\}.
\end{split}
\end{equation*}
Note that $\dom(\Gamma_{p})$ coincides with the whole $\L(a,b)$; this follows immediately from the $\ran(A_{p,q}+\Id)^{-1}\subset \dom(\a_{p,q})$.
Let us prove that the operator
 $\Gamma_p$ is  compact.
For this purpose we introduce the finite rank operators 
$$
\Gamma_p^n:\L(a,b)\to l^2(\N),\qquad (\Gamma_p^n f)_k = \begin{cases} (\Gamma_p f)_k, & k\le n,\\ 
 0, & k> n. \end{cases}
$$
Let $f\in\L(a,b)=\dom(\Gamma_p)$ and $u=(A_{p,q}+\Id)^{-1}f$.
Using the same arguments as in the proof of \eqref{form:encl} and \eqref{okpr} we
obtain 
\begin{equation}\label{f-est}
\begin{split}
\|\Gamma_p^n f - \Gamma_p f\|^2_{l^2(\N)}&=
\suml_{k:\,k>n}\frac{1}{p_k}|u(x_k+0)-u(x_k-0)|^2\\ 
&\leq 4\kappa_{n+1} (b-a)^2 \left(\a_{p,q}[u,u] +  \|u\|^2_{\L(a,b)}\right)+8\kappa_{n+1} \|u\|^2_{\L(a,b)} \\
&=4\kappa_{n+1} (b-a)^2(f,u)_{\L(a,b)} +8\kappa_{n+1} \|u\|^2_{\L(a,b)}\\
&\leq (4\kappa_{n+1} (b-a)^2  +8\kappa_{n+1}) \|f\|^2_{\L(a,b)}
\end{split}
\end{equation}
and hence it follows from \eqref{kappa} that $\|\Gamma_p^n  - \Gamma_p\|_{l^2(\N)} \to 0$ as $n\to\infty$. Since $\Gamma_p^n$ are finite rank operators
we conclude that the operator $\Gamma_p$ is compact. 
Furthermore, it is easy to see that  $\Gamma_\infty$ is a bounded operator defined on $\L(a,b)$.
Indeed, for $g\in\L(a,b)$ and $v=(A_{\infty,q}+\Id)^{-1}g$ one verifies in the same way as in
\eqref{form:encl} and \eqref{f-est} that
\begin{equation*}
 \begin{split}
\|\Gamma_\infty g\|^2_{l^2(\N)}&=
\suml_{k\in\N}(p_k)^{-1}|v(x_k+0)-v(x_k-0)|^2\\
&\le 
(4\kappa_1 (b-a)^2  +8\kappa_1) \|g\|^2_{\L(a,b)}.
\end{split}
\end{equation*}
Now \eqref{res-dif} can be rewritten in the form 
$$(T_{p,q}f,g)_{\L(a,b)}=-(\Gamma_p f,\Gamma_\infty g)_{l^2(\N)},\qquad f,g\in\L(a,b),$$ 
and hence we have
$T_{p,q} =-(\Gamma_\infty)^*\Gamma_p.$ Since $\Gamma_p$ is compact and 
$\Gamma_\infty$ is bounded (thus $(\Gamma_\infty)^*$ is also bounded) we conclude that 
$T_{p,q}$ is compact. 
\end{proof}

For the convenience of the reader we now formulate Theorem~\ref{th-BK} in a more precise form.

\begin{theorem}\label{th-BK+}
Let $\S\subset [0,\infty)$ be an arbitrary closed set such that  $0\in \S$ and choose
the sequences $(s_k)_{k\in \N}$ and $(d_k)_{k\in \N}$ as in \eqref{s-assumpt2}--\eqref{dk+}. 
Let
$(q_k)_{k\in \N}$ and $(p_k)_{k\in \N}$ be sequences such that \eqref{2+} and \eqref{rho} hold. Then the self-adjoint 
Schr\"odinger operator $A_{p,q}$ with $\delta'$-interactions of strengths $(q_k)_{k\in \N}$ and $(p_k)_{k\in \N}$ at the points $(y_k)_{k\in \N}$ and $(x_k)_{k\in \N}$,
respectively, satisfies
$$\sigma_\ess(A_{p,q})=\S.$$
\end{theorem}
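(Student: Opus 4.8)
The plan is to observe that Theorem~\ref{th-BK+} carries no analytic content beyond what was already established in Steps~1--4 of the proof of Theorem~\ref{th-BK}: it only pins down a concrete family of admissible parameters, so the proof should amount to checking that the hypotheses \eqref{s-assumpt2}--\eqref{dk+}, \eqref{2+} and \eqref{rho} are precisely the conditions invoked at each of those steps. First I would record the admissibility of the choices: \eqref{dk+} makes $b=\sum_{k\in\N}d_k$ finite, so that $(a,b)=(0,b)$ is a bounded interval, \eqref{dk-infty} then follows and guarantees that the points $x_k$, $y_k$ built from the $d_k$ accumulate only at $b$, and the inequality $0<s_k<(\pi/d_k)^2$ contained in \eqref{dk} is compatible with \eqref{s-assumpt2} and leaves room for the choice of $q_k$; one also notes that $\rho_k=(p_k\min\{d_k,d_{k+1}\})^{-1}$, so that \eqref{rho} can always be met by choosing $p_k$ large enough.

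Next I would use Steps~1 and 2. Lemma~\ref{lemma:EVprop} shows that $q_k\mapsto\lambda_2(\Ak_{q_k,\I_k})$ is continuous and decreasing, running from $\lambda_2(\Ak_{0,\I_k})=(\pi/d_k)^2$ (the limit as $q_k\to 0$, by \eqref{0} and \eqref{2}) down towards $\lambda_2(\Ak_{\infty,\I_k})=0$; hence the requirement \eqref{2+} is consistent, and for any sequence $(q_k)$ satisfying it, together with $\lambda_1(\Ak_{q_k,\I_k})=0$ and $\lambda_3(\Ak_{q_k,\I_k})\geq(2\pi/d_k)^2\to\infty$ from \eqref{13+}, the orthogonal-sum structure \eqref{ai8} of $A_{\infty,q}$ gives
$$
\sigma_\ess(A_{\infty,q})=\acc\big((\lambda_j(\Ak_{q_k,\I_k}))_{j,k\in\N}\big)=\{0\}\cup\acc\big((s_k)_{k\in\N}\big)=\S,
$$
where the last equality uses $0\in\S$ and \eqref{s-assumpt2}.

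Then I would use Steps~3 and 4. The operator $A_{p,q}$ named in the theorem is well defined as a nonnegative self-adjoint $\delta'$-realization, with closed form $\a_{p,q}$, by \cite[Lemma~2.6 and Proposition~2.1]{KM14}; and under \eqref{rho} the factorization obtained in Step~4, namely $T_{p,q}=(A_{p,q}+\Id)^{-1}-(A_{\infty,q}+\Id)^{-1}=-(\Gamma_\infty)^*\Gamma_p$ with $\Gamma_\infty$ bounded and $\Gamma_p$ compact (the norm limit of the finite-rank truncations $\Gamma_p^n$ by \eqref{f-est} and \eqref{kappa}), shows that $T_{p,q}$ is compact, so that Weyl's theorem yields $\sigma_\ess(A_{p,q})=\sigma_\ess(A_{\infty,q})$. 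Combining this with the display above gives $\sigma_\ess(A_{p,q})=\S$, as claimed.

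I do not expect a genuine obstacle in Theorem~\ref{th-BK+} itself; the real difficulty lies in Step~4 of the proof of Theorem~\ref{th-BK}, namely the compactness of $\Gamma_p$ (which rests on the elementary trace inequalities on each $\I_k$ together with the decay \eqref{rho}), with the closedness of $\a_{p,q}$ imported from \cite{KM14} as the other non-trivial ingredient. The only point requiring care when writing out the proof is to make the correspondence between the listed hypotheses and the conditions used at each step fully explicit, which is what the steps above do.
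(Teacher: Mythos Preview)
Your proposal is correct and matches the paper's treatment: Theorem~\ref{th-BK+} is explicitly introduced only as a more precise reformulation of Theorem~\ref{th-BK}, and the paper gives no separate proof for it. Your observation that the listed hypotheses \eqref{s-assumpt2}--\eqref{dk+}, \eqref{2+}, and \eqref{rho} are exactly the conditions invoked in Steps~1--4 of the proof of Theorem~\ref{th-BK} is precisely the point.
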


At the end of this section we note that there exist many other methods for the construction of Schr\"odinger operators
with predefined spectral properties. For example, F.~Gesztesy, W.~Karwowski, and Z.~Zhao  constructed in \cite{GKZ92a,GKZ92b} a smooth
potential $V$ (which is a limit of suitably chosen  
$N$-soliton solutions of the Korteweg-de Vries equation as $N\to\infty$) such that the Schr\"odinger operator $H = -{\d^2\over \d x^2} + V$ has purely
absolutely continuous spectrum $\R_+$ and a prescribed sequence of points in $\R_-$ is contained in the set
of eigenvalues of $H$.

\section{Essential spectra of self-adjoint extensions of symmetric operators\label{sec3}}

The aim of this slightly more abstract section is to discuss some possible spectral 
properties of self-adjoint extensions of a given symmetric operator in a separable Hilbert space. In a similar context the existence of self-adjoint extensions 
with prescribed point spectrum, absolutely continuous spectrum, and singular continuous spectrum in spectral gaps of a fixed underlying symmetric operator 
was discussed in \cite{ABMN05,ABN98,B04,BMN06,BN95,BN96,BNW93}, see also \cite{M79} for a related result on prescribed eigenvalue asymptotics {\color{black}or, e.g., the earlier contributions \cite{AI71,G69,G70,I71}}. Our main observation here is the fact that for a symmetric operator with infinite defect numbers 
one can construct self-adjoint extensions with prescribed essential spectrum; cf. Theorem~\ref{essit} below.

In the following let $\cH$ be a separable (infinite dimensional) complex Hilbert space with scalar product $(\cdot,\cdot)$. Recall that a linear operator $S$ in $\cH$
is said to be {\it symmetric} if
\begin{equation*}
(Sf,g)=(f,Sg),\qquad f,g\in\dom (S).
\end{equation*}
We point out that a symmetric operator is in general not self-adjoint. More precisely, if the domain $\dom (S)$ of $S$ is dense in $\cH$ then 
the adjoint $S^*$ of the operator $S$ is given by
\begin{equation*}
\begin{split}
S^*h&=k,\\ 
\dom (S^*)&=\bigl\{h\in\cH:\exists\,k\in\cH\text{ such that } (Sf,h)=(f,k)\text{ for all }f\in\dom (S)\bigr\},
\end{split}
\end{equation*}
and the fact that $S$ is symmetric is equivalent to the inclusion $S\subset S^*$ in the sense that $\dom (S)\subset\dom (S^*)$ and $S^*f=Sf$ for all
$f\in\dom (S)$. However, this is obviously a weaker property than the more natural physical property of {\it self-adjointness}, that is, $S=S^*$.
A symmetric operator is not necessarily closed (although closable) and the spectrum of a symmetric operator which is not self-adjoint 
typically covers the whole complex plane (or at least the upper or lower complex halfplane). We also point out that the closure $\overline S$ 
of a symmetric operator $S$
is not necessarily self-adjoint; if this is the case such an operator is called {\it essentially self-adjoint}, that is, $\overline S=S^*$ -- however,
we shall not deal with essential self-adjoint operators here.
We emphasize that from a spectral theoretic point of view
a symmetric operator (or an essentially self-adjoint operator) which is not self-adjoint is not suitable as an observable in the description of a 
physical quantum system. 

It is an important issue to understand in which situations a symmetric operator admits self-adjoint extensions and how these self-adjoint extensions
can be described. 
These questions were already discussed in the classical contribution \cite{N30} by J. von Neumann. 
For completeness we recall that a self-adjoint operator $A$ in $\cH$ is an extension of a densely defined symmetric operator $S$ 
if $S\subset A$; since $A$ is self-adjoint this is equivalent to $A\subset S^*$. 
We start by recalling the so-called first von
Neumann formula in the next theorem.

\begin{theorem}
	Let $S$ be a densely defined closed symmetric operator in $\cH$. Then the domain of the adjoint operator $S^*$ admits the direct sum decomposition
	\begin{equation}\label{neu1}
	\dom (S^*)=\dom (S)\,\dot+\,\ker(S^*-i)\,\dot+\,\ker(S^*+i).
	\end{equation}
\end{theorem}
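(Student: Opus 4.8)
The plan is to prove the direct sum decomposition \eqref{neu1} in three movements: first establishing that $\dom(S^*)$ is the algebraic sum of the three pieces, then verifying that the sum is direct, and finally (implicitly) noting that $\dom(S)$ is closed as a subspace of the graph-Hilbert-space structure on $\dom(S^*)$. The natural framework is to equip $\dom(S^*)$ with the graph inner product $(f,g)_{S^*} := (f,g) + (S^*f, S^*g)$, under which it becomes a Hilbert space because $S^*$ is always closed. I would work throughout in this Hilbert space.

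First I would show $\ker(S^* \mp i)$ is exactly the graph-orthogonal complement of $\dom(S)$ in $\dom(S^*)$, split according to the eigenvalue. The computation is the standard one: for $h \in \dom(S^*)$ and $f \in \dom(S)$, one has $(f,h)_{S^*} = (f,h) + (Sf, S^*h)$, and using $S \subset S^*$ together with the definition of the adjoint this equals $(f, h + (S^*)^2 h)$ whenever $h \in \dom((S^*)^2)$ — but more cleanly, $h \perp_{S^*} \dom(S)$ means $(f,h) + (Sf, S^*h) = 0$ for all $f \in \dom(S)$, which says precisely that $S^*h \in \dom(S^*)$ with $S^* S^* h = -h$, i.e. $(S^* - i)(S^* + i) h = 0$ on the relevant domain; decomposing via the spectral-type identity $\cH = \ker(S^*-i) \oplus \ker(S^*+i)$ on this orthogonal complement gives the two defect pieces. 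Concretely I would verify the two inclusions $\ker(S^* \mp i) \subset \dom(S)^{\perp_{S^*}}$ directly (if $S^* h = \pm i h$ then $(f,h)_{S^*} = (f,h) + (Sf, \pm i h) = (f,h) \mp i (Sf, h) = (f,h) \mp i (f, \mp i h)\cdot$ wait — using symmetry $(Sf,h)=(f,S^*h)=(f,\pm i h)$, so $(f,h)_{S^*} = (f,h) + \overline{(\pm i)}(f,\pm i h) = (f,h) + (\mp i)(\pm i)(f,h) = (f,h) - (f,h) = 0$), and conversely that the graph-orthogonal complement of $\dom(S)$ decomposes as the sum of these two eigenspaces.

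For the directness of the sum, I would argue that $\dom(S) \cap \bigl(\ker(S^*-i) + \ker(S^*+i)\bigr) = \{0\}$ and that $\ker(S^*-i) \cap \ker(S^*+i) = \{0\}$. The second is immediate: a vector in both would satisfy $ih = -ih$. For the first, if $f \in \dom(S)$ also lies in the graph-orthogonal complement of $\dom(S)$, then $\|f\|_{S^*}^2 = (f,f)_{S^*} = 0$, so $f = 0$; and the eigenspace sum lies in that orthogonal complement by the first movement. Finally, since $\dom(S) = \dom(\overline S)$ is closed in the graph norm of $S$, and the graph norm of $S^*$ restricted to $\dom(S)$ coincides with the graph norm of $S$ (because $S^* f = S f$ for $f \in \dom(S)$), the subspace $\dom(S)$ is closed in $(\dom(S^*), (\cdot,\cdot)_{S^*})$; combined with the orthogonal decomposition of its complement this yields the full direct (indeed $S^*$-orthogonal) sum \eqref{neu1}.

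The main obstacle I anticipate is purely organizational rather than deep: one must be careful that the "spectral" splitting $\cH = \ker(S^*-i) \oplus \ker(S^*+i)$ of the graph-orthogonal complement is genuinely valid, i.e. that any $h$ in the complement with $S^*S^*h = -h$ actually decomposes as $h_+ + h_-$ with $S^* h_\pm = \pm i h_\pm$ and $h_\pm \in \dom(S^*)$. This requires setting $h_\pm := \tfrac{1}{2}(h \mp i S^* h)$ and checking $h_\pm \in \dom(S^*)$ (which follows since $S^* h$ is again in the complement, hence in $\dom(S^*)$) and that $S^* h_\pm = \pm i h_\pm$ (a direct computation using $S^*S^*h = -h$). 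Everything else is routine manipulation with the graph inner product and the defining property of the adjoint.
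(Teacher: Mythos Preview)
The paper does not actually supply a proof of this theorem: it is stated as the classical ``first von Neumann formula'' and merely recalled (with a reference to \cite{N30}) as background for the extension theory in Section~\ref{sec3}. So there is no paper proof to compare against.

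Your proposal is the standard, correct argument. A couple of small clarifications. First, in the middle of your orthogonality computation you write ``wait'' and then redo it; the clean version is: for $S^*h=\pm ih$ and $f\in\dom(S)$, one has $(f,h)_{S^*}=(f,h)+(Sf,S^*h)=(f,h)+(f,S^*S^*h)=(f,h)+(f,-h)=0$, using only $(Sf,g)=(f,S^*g)$ and $(S^*)^2h=-h$. Second, your parenthetical ``since $S^*h$ is again in the complement, hence in $\dom(S^*)$'' has the logic slightly backwards: the crucial fact is that $S^*h\in\dom(S^*)$, and this follows directly from the definition of the adjoint once you know $(Sf,S^*h)=(f,-h)$ for all $f\in\dom(S)$ (that $S^*h$ also lies in the graph-orthogonal complement is true but not needed). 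With these cosmetic fixes the argument is complete and is exactly the textbook proof (e.g.\ Schm\"udgen or Akhiezer--Glazman, both cited in the paper).
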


Note that $S^*f_i=if_i$ for all $f_i\in\ker(S^*-i)$ and similarly $S^*f_{-i}=-if_{-i}$ for all $f_{-i}\in\ker(S^*+i)$. The spaces $\ker(S^*-i)$ and 
$\ker(S^*+i)$ are usually called {\it defect spaces} of $S$ and their dimensions are the {\it deficiency indices} of $S$. 
It will turn out that the deficiency indices and isometric operators in between the defect spaces are particularly 
important in the theory of self-adjoint extensions. 
One can show that the dimension of $\ker(S^*-\lambda_+)$ does not depend on $\lambda_+\in\dC^+$
and the dimension of $\ker(S^*-\lambda_-)$ does not depend on $\lambda_-\in\dC^-$. However, for fixed $\lambda_+\in\dC^+$ and $\lambda_-\in\dC^-$ 
and hence, in particular, for $\lambda_+=i$ and 
$\lambda_-=-i$,
the dimensions of $\ker(S^*-\lambda_+)$ and $\ker(S^*-\lambda_-)$ may be different. According to the second von Neumann formula both dimensions coincide
if and only if $S$ admits self-adjoint extensions in $\cH$.

\begin{theorem}\label{n2}
	Let $S$ be a densely defined closed symmetric operator in $\cH$. Then there exist self-adjoint extensions $A$ of $S$ in $\cH$ if and only if 
	\begin{equation*}
	\dim\bigl(\ker(S^*-i)\bigr)=\dim\bigl(\ker(S^*+i)\bigr).
	\end{equation*}
	If, in this case $U:\ker(S^*-i)\rightarrow \ker(S^*+i)$ is a unitary operator and $\dom(S^*)$ is decomposed as in \eqref{neu1}, then the operator $A$ defined by
	\begin{equation}\label{aa}
	\begin{split}
	A(f_S+f_i+f_{-i})&=Sf_S +if_i -if_{-i},\\
	\dom (A)&=\bigl\{f=f_S+f_i+f_{-i}\in\dom (S^*):f_{-i}=Uf_i \bigr\},
	\end{split}
	\end{equation}
	is a self-adjoint extension of $S$ and, vice versa, for any self-adjoint extension $A$ of $S$ there exists a unitary operator 
	$U:\ker(S^*-i)\rightarrow \ker(S^*+i)$ such that \eqref{aa} holds.
\end{theorem}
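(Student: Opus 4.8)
The plan is to deduce Theorem~\ref{n2} from the first von Neumann formula \eqref{neu1} by studying the \emph{boundary form}
\[
\Gamma(f,g):=(S^*f,g)-(f,S^*g),\qquad f,g\in\dom(S^*).
\]
By the very definition of the adjoint one has $\Gamma(f,g)=0$ whenever $f\in\dom(S)$ or $g\in\dom(S)$, so $\Gamma$ factors through the quotient $\dom(S^*)/\dom(S)$, which by \eqref{neu1} is naturally isomorphic to $\ker(S^*-i)\oplus\ker(S^*+i)$. Writing $f=f_S+f_i+f_{-i}$ and $g=g_S+g_i+g_{-i}$ as in \eqref{neu1} and using $S^*f_i=if_i$, $S^*f_{-i}=-if_{-i}$, a short computation gives
\[
\Gamma(f,g)=2i\bigl[(f_i,g_i)-(f_{-i},g_{-i})\bigr],
\]
so that, up to the factor $2i$, $\Gamma$ is the difference of the inner products carried by the two defect spaces.

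Next I would parametrise the intermediate extensions. If $A$ satisfies $S\subset A\subset S^*$, then $A$ is symmetric exactly when $\Gamma(f,g)=0$ for all $f,g\in\dom(A)$, i.e.\ $(f_i,g_i)=(f_{-i},g_{-i})$; taking $f=g$ gives $\|f_i\|=\|f_{-i}\|$ for every $f\in\dom(A)$, whence (applying this to differences) the assignment $f_i\mapsto f_{-i}$ is well defined and, by polarisation, isometric. Thus there is an isometry $V$ from a subspace $\dom(V)\subset\ker(S^*-i)$ onto $\ran(V)\subset\ker(S^*+i)$ with
\[
\dom(A)=\dom(S)\,\dot+\,\bigl\{\,f_i+Vf_i:f_i\in\dom(V)\,\bigr\},\qquad A(f_S+f_i+Vf_i)=Sf_S+if_i-iVf_i,
\]
and conversely every isometry $V$ between subspaces of the defect spaces produces a symmetric extension of $S$ contained in $S^*$ in this way (closed if $\dom(V)$ is closed).

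It then remains to single out the self-adjoint ones. From $\Gamma(f,g)=0$ for all $g\in\dom(A)$ one computes that $f=f_S+f_i+f_{-i}\in\dom(A^*)$ if and only if $(f_i,g_i)=(f_{-i},Vg_i)$ for all $g_i\in\dom(V)$; comparing this description of $\dom(A^*)$ with that of $\dom(A)$ shows $A=A^*$ precisely when $\dom(V)=\ker(S^*-i)$ and $\ran(V)=\ker(S^*+i)$, that is, when $V=:U$ is \emph{unitary} between the full defect spaces. (Indeed, if $\dom(V)\neq\ker(S^*-i)$ one can adjoin a vector $f_i\in\dom(V)^{\perp}$ with $f_{-i}=0$ to obtain an element of $\dom(A^*)\setminus\dom(A)$, and symmetrically for $\ran(V)$.) Hence self-adjoint extensions of $S$ exist iff a unitary $U:\ker(S^*-i)\to\ker(S^*+i)$ exists, iff these two closed subspaces have equal dimension; and for such $U$ the operator above is exactly \eqref{aa} with $f_{-i}=Uf_i$, while by the previous step every self-adjoint extension arises this way. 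I expect the one genuinely delicate point to be the identification of $\dom(A^*)$ and the ensuing equivalence ``$A$ self-adjoint $\Leftrightarrow$ the isometry is onto the \emph{whole} defect space in both variables''; this is where density of $\dom(S)$ and closedness of $S$ — hence the topological directness of the decomposition \eqref{neu1} — are actually used.
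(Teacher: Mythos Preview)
Your argument is the standard proof of von Neumann's second formula via the boundary form, and it is correct. Note, however, that the paper does \emph{not} prove Theorem~\ref{n2}: it is stated there as a classical background result (attributed to von Neumann \cite{N30}) and used without proof, so there is no ``paper's own proof'' to compare against. Your sketch is a self-contained derivation that would fill this gap; the computation $\Gamma(f,g)=2i[(f_i,g_i)-(f_{-i},g_{-i})]$ is right (the cross terms vanish because $(if_i,g_{-i})-(f_i,-ig_{-i})=0$), the passage from symmetric extensions to partial isometries is correct, and the argument that $A=A^*$ forces $\dom(V)=\ker(S^*-i)$ and $\ran(V)=\ker(S^*+i)$ via the explicit obstruction vectors is the right one. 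The only point worth tightening in a full write-up is the implicit use of $S\subset A\Rightarrow A^*\subset S^*$ to justify that $\dom(A^*)$ is described entirely by the vanishing of $\Gamma(\cdot,g)$ on $\dom(A)$; you allude to this but it deserves one explicit line.
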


From the intuition is it clear that for a densely defined symmetric operator with equal infinite deficiency indices there is a lot of flexibility
for the unitary operators in between the defect subspaces (since they are infinite dimensional). This flexibility also allows to construct self-adjoint extensions
with various different spectral properties.

Now we wish to consider the following particular situation. Let again $S$ be a densely defined closed symmetric operator in $\cH$ with equal infinite
deficiency indices and assume that there exists a self-adjoint extension of $S$ such that the resolvent is a compact operator. In this situation 
we shall construct another self-adjoint extension $A$ of $S$ with prescribed 
essential spectrum in the next theorem.

\begin{theorem}\label{essit}
	Let $S$ be a densely defined closed symmetric operator in $\cH$ with equal infinite deficiency indices and assume that	
	there exists a self-adjoint extension of $S$ with compact resolvent.
	Let $\cG$ be a separable infinite dimensional Hilbert space and let $\Xi$ be a self-adjoint operator in $\cG$ with $\dR\cap\rho(\Xi)\not=\emptyset$. 
	Then there exists a self-adjoint extension
	$A$ of $S$ in $\cH$ such that
	\begin{equation}\label{essi}
	\sigma_{\rm ess}(A)=\sigma_{\rm ess}(\Xi).
	\end{equation}
\end{theorem}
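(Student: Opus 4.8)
The plan is to embed the problem into the framework of ordinary boundary triples for $S^*$, to realise $\sigma_{\rm ess}(\Xi)$ by using a bounded self-adjoint perturbation of $\Xi$ as the operator parameter in the corresponding abstract boundary condition, and to read off the essential spectrum of the resulting self-adjoint extension from Krein's resolvent formula together with Weyl's theorem and the spectral mapping theorem for resolvents.

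First I would fix a self-adjoint extension $A_0$ of $S$ with compact resolvent; then $\sigma(A_0)$ is discrete, so $\rho(A_0)\cap\dR$ is the complement of a countable set, and since $\rho(\Xi)\cap\dR$ is open and nonempty one may pick a point $\mu_0\in\rho(A_0)\cap\rho(\Xi)\cap\dR$. Next I would choose an ordinary boundary triple $\{\cG,\Gamma_0,\Gamma_1\}$ for $S^*$ with $A_0=S^*\restriction\ker\Gamma_0$; this is possible with boundary space exactly $\cG$, because the deficiency indices of $S$ coincide and equal $\dim\cG=\infty$. Using the standard transformation $(\Gamma_0,\Gamma_1)\mapsto(R\Gamma_0,(R^*)^{-1}(\Gamma_1+C\Gamma_0))$ with a positive boundedly invertible $R$ and a bounded self-adjoint $C$ in $\cG$, which leaves $\ker\Gamma_0$ and hence $A_0$ unchanged, I would arrange that the $\gamma$-field at $\mu_0$, call it $V:=\gamma(\mu_0)$, is \emph{unitary} from $\cG$ onto $\cN_{\mu_0}:=\ker(S^*-\mu_0)$; for this one takes $R=|\gamma(\mu_0)|$. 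Let $M(\cdot)$ be the corresponding Weyl function; $M(\mu_0)$ is a bounded self-adjoint operator in $\cG$.

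Now I would set $\Theta:=\Xi-\mu_0 I+M(\mu_0)$, which is self-adjoint in $\cG$ with $\dom\Theta=\dom\Xi$, and define $A:=S^*\restriction\ker(\Gamma_1-\Theta\Gamma_0)$; this is a self-adjoint extension of $S$. Since $\Theta-M(\mu_0)=\Xi-\mu_0 I$ and $\mu_0\in\rho(\Xi)$, one has $\mu_0\in\rho(A)$ and Krein's resolvent formula gives
\[
(A-\mu_0)^{-1}=(A_0-\mu_0)^{-1}+V(\Xi-\mu_0)^{-1}V^*.
\]
Because $V$ is an isometry of $\cG$ onto $\cN_{\mu_0}$, and the orthogonal complement $\cN_{\mu_0}^\perp\supseteq\ran(S-\mu_0)$ is infinite-dimensional, the operator $V(\Xi-\mu_0)^{-1}V^*$ is unitarily equivalent to $(\Xi-\mu_0)^{-1}\oplus 0$ on $\cN_{\mu_0}\oplus\cN_{\mu_0}^\perp$, so that $\sigma_{\rm ess}(V(\Xi-\mu_0)^{-1}V^*)=\sigma_{\rm ess}((\Xi-\mu_0)^{-1})\cup\{0\}$. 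Since $(A_0-\mu_0)^{-1}$ is compact, Weyl's theorem yields $\sigma_{\rm ess}((A-\mu_0)^{-1})=\sigma_{\rm ess}((\Xi-\mu_0)^{-1})\cup\{0\}$. Finally, neither $A$ nor $\Xi$ is bounded, since a bounded self-adjoint extension of $S$ would force $S$ to be essentially self-adjoint, contradicting the infinite deficiency indices; hence by the spectral mapping theorem for resolvents of self-adjoint operators $\sigma_{\rm ess}((A-\mu_0)^{-1})=\{(t-\mu_0)^{-1}:t\in\sigma_{\rm ess}(A)\}\cup\{0\}$, and similarly for $\Xi$. As $0$ is not of the form $(t-\mu_0)^{-1}$ and $t\mapsto(t-\mu_0)^{-1}$ is a bijection of $\dR\setminus\{\mu_0\}$ onto $\dR\setminus\{0\}$, the displayed identity collapses to $\sigma_{\rm ess}(A)=\sigma_{\rm ess}(\Xi)$.

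I expect the main obstacle to be the construction in the second step: setting up the boundary triple so that the $\gamma$-field at $\mu_0$ is unitary onto $\cN_{\mu_0}$ while keeping $A_0=S^*\restriction\ker\Gamma_0$, and simultaneously keeping track of $M(\mu_0)$ so that it can be absorbed into the parameter $\Theta$ — this is exactly where the infinite (and equal) deficiency indices of $S$ are exploited. Once the triple is normalised in this way, the remaining steps — Krein's formula, invariance of the essential spectrum under the compact perturbation $(A_0-\mu_0)^{-1}$, and the spectral mapping theorem — are routine.
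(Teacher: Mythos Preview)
Your argument is correct and yields the claimed result. One small quibble: the assertion that $\Xi$ is unbounded is neither assumed nor justified (your reasoning about bounded self-adjoint extensions applies to $A$, not to $\Xi$), but this does not matter for the conclusion. Once you have
\[
\sigma_{\rm ess}\bigl((A-\mu_0)^{-1}\bigr)=\sigma_{\rm ess}\bigl((\Xi-\mu_0)^{-1}\bigr)\cup\{0\},
\]
the spectral mapping $t\mapsto (t-\mu_0)^{-1}$ is a homeomorphism of $\dR\setminus\{\mu_0\}$ onto $\dR\setminus\{0\}$ carrying $\sigma_{\rm ess}(T)$ to $\sigma_{\rm ess}((T-\mu_0)^{-1})\setminus\{0\}$ for any self-adjoint $T$ with $\mu_0\in\rho(T)$; no unboundedness of $\Xi$ is needed to strip off the point $0$ and conclude $\sigma_{\rm ess}(A)=\sigma_{\rm ess}(\Xi)$.

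Your route differs from the paper's in presentation rather than in substance. The paper bypasses the boundary triple formalism entirely: it identifies $\cG$ with $\ker(S^*-\mu)$ via an arbitrary unitary, writes down directly the bounded self-adjoint operator
\[
R_\mu:=(A_0-\mu)^{-1}+\begin{bmatrix}(\Xi-\mu)^{-1}&0\\0&0\end{bmatrix}
\]
with respect to $\cH=\ker(S^*-\mu)\oplus\ran(S-\mu)$, checks by hand that $R_\mu$ is injective using the direct sum decomposition $\dom(S^*)=\dom(A_0)\,\dot+\,\ker(S^*-\mu)$, and then sets $A:=R_\mu^{-1}+\mu$. Your normalisation of the $\gamma$-field to a unitary and subsequent application of Krein's formula produce exactly the same resolvent identity; the boundary triple machinery simply packages the verification that $A$ is a self-adjoint extension of $S$, which the paper establishes separately via the explicit domain description after the proof. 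The paper's argument is more elementary and self-contained; yours makes the extension property $S\subset A\subset S^*$ automatic and connects the construction to a standard parametrisation of self-adjoint extensions.
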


\begin{proof}
Let $A_0$ be a self-adjoint extension of $S$ in $\cH$ such that the resolvent $(A_0-\lambda)^{-1}$ is a compact operator for some, and hence for all, $\lambda\in\rho(A_0)$.
 	Let us fix some point $\mu\in\dR\cap\rho(A_0)\cap\rho(\Xi)$. Note that this is possible since we have assumed $\dR\cap\rho(\Xi)\not=\emptyset$ and 
 	the spectrum of $A_0$ is a discrete subset of the real line due to the compactness assumption.
	In the present situation the spaces $\ker(S^*-\lambda_+)$ and $\ker(S^*-\lambda_-)$ for $\lambda_\pm\in\dC^\pm$
	are both infinite dimensional and one can show that here also the space $\ker(S^*-\mu)$ is infinite dimensional; this follows, e.g., from the direct
	sum decomposition
	\begin{equation}\label{deco}
	\dom (S^*)=\dom (A_0)\,\dot+\,\ker(S^*-\mu)
	\end{equation}
	and the fact that $S^*$ is an infinite dimensional extension of $A_0$. Moreover, it is no restriction to assume that the Hilbert space $\cG$ in the assumptions
	of the theorem coincides with $\ker(S^*-\mu)$ since any two separable infinite dimensional Hilbert spaces can be identified via a unitary operator.
	Now observe the orthogonal sum decomposition
	\begin{equation*}
	\cH=\ker(S^*-\mu)\oplus\ran(S-\mu)
	\end{equation*}
	and with respect to this decomposition we consider the bounded everywhere defined operator
	\begin{equation}\label{rmu}
	R_\mu:=(A_0-\mu)^{-1}+\left[\begin{matrix} (\Xi-\mu)^{-1} & 0 \\ 0 & 0 \end{matrix}\right].
	\end{equation}
	We claim that $R_\mu^{-1}$ is a well-defined operator. In fact, if $R_\mu h=0$ for some $h\in\cH$ then \eqref{rmu} implies
	\begin{equation*}
	(A_0-\mu)^{-1}h=-\left[\begin{matrix} (\Xi-\mu)^{-1} & 0 \\ 0 & 0 \end{matrix}\right]h,
	\end{equation*}
	and since the left-hand side belongs to $\dom (A_0)$ and the right-hand side is nonzero only in $\ker(S^*-\mu)$ it follows from the direct sum decomposition
	\eqref{deco} that $h=0$. This confirms that $R_\mu^{-1}$, and hence also 
	$$
	A:=R_\mu^{-1}+\mu
	$$
	is a well-defined operator. It is clear from $R_\mu=(A-\mu)^{-1}$ that $\mu\in\rho(A)$ and $A$ is self-adjoint in $\cH$ since the same is obviously true for $R_\mu$ in \eqref{rmu}. 
	In order to determine the essential spectrum of $A$ recall the Weyl theorem (see, e.g., \cite[Theorem XIII.14]{RS78}) which states 
	that compact perturbations in resolvent 
	sense do not change the essential spectrum. In the present situation we have that 
	$$
	(A-\mu)^{-1}-\left[\begin{matrix} (\Xi-\mu)^{-1} & 0 \\ 0 & 0 \end{matrix}\right]=R_\mu-\left[\begin{matrix} (\Xi-\mu)^{-1} & 0 \\ 0 & 0 \end{matrix}\right]=(A_0-\mu)^{-1}
	$$
	is a compact operator and hence the essential spectrum $\sigma_{\rm ess}((A-\mu)^{-1})$ coincides with the 
	essential spectrum of the diagonal block matrix operator, that is, 
	$$\sigma_{\rm ess}\left(\left[\begin{matrix} (\Xi-\mu)^{-1} & 0 \\ 0 & 0 \end{matrix}\right]\right)=\sigma_{\rm ess}((\Xi-\mu)^{-1})\cup\{0\}.$$ 
	This implies \eqref{essi}.  
\end{proof}

From the construction of the operator $A$ in the proof of Theorem~\ref{essit} the following representation can be concluded:
\begin{equation}\label{as}
\begin{split}
A(f_0+f_\mu)&=A_0f_0 + \mu f_\mu,\\
\dom (A)&=\bigl\{f_0+f_\mu\in\dom (A_0)\,\dot+\,\ker(S^*-\mu):\\
&\qquad\qquad\qquad\qquad\qquad (\Xi-\mu)f_\mu=P_\mu(A_0-\mu)f_0\bigr\};
\end{split}
\end{equation}
here $P_\mu$ denotes the orthogonal projection in $\cH$ onto $\ker(S^*-\mu)$.
In fact, since \eqref{rmu} is the resolvent $(A-\mu)^{-1}$ of $A$ it follows that the elements $f\in\dom (A)$ have the form 
$f=R_\mu h$, $h\in\cH$. Due to the direct sum decomposition \eqref{deco} we have $R_\mu h=f=f_0+f_\mu$ with some $f_0\in\dom (A_0)$ and some $f_\mu\in\ker(S^*-\mu)$,
and when comparing with \eqref{rmu} it follows that $f_0=(A_0-\mu)^{-1}h$ and $f_\mu=(\Xi-\mu)^{-1}P_\mu h$. Hence it is clear that $f=R_\mu h\in\dom (A)$
satisfies the condition
\begin{equation}\label{bcd}
(\Xi-\mu)f_\mu=P_\mu(A_0-\mu)f_0
\end{equation}
in \eqref{as}. On the other hand, if $f=f_0+f_\mu\in\dom (A_0)\,\dot+\,\ker(S^*-\mu)$ is such that \eqref{bcd} holds then one can verify in a similar way that
there exists $h\in\cH$ such that $f=R_\mu h$, and hence $f\in\dom (A)$. Summing up we have shown the representation \eqref{as}.

Finally we note that the explicit form \eqref{as} of $A$ comes via a restriction of the adjoint operator $S^*$ and the decomposition \eqref{deco}; the domain of $A$
is described by an abstract boundary condition depending on the choice of the operator $\Xi$. This abstract result can of course be 
formulated in various explicit situations, e.g., for infinitely many $\delta'$-interactions as in Section~\ref{sec2} or for the Laplacian
on a bounded domain as in Section~\ref{sec1}, where the boundary condition in \eqref{as} can be specified further. 

Furthermore, the self-adjoint extensions $A$ and $A_0$ can be described in the formalism of von Neumann's second formula in Theorem~\ref{n2}.
If one fixes a unitary operator $U_0:\ker(S^*-i)\rightarrow \ker(S^*+i)$ for the representation of $A_0$ in \eqref{aa} then 
the unitary operator $U:\ker(S^*-i)\rightarrow \ker(S^*+i)$ corresponding to the self-adjoint extension $A$ can be expressed in terms of $U_0$ and the parameter
$\Xi$. The technical details are left to the reader.

\numberwithin{theorem}{section}
\appendix

\section{Continuous dependence of the eigenvalues on varying domains}\label{appa}

In this appendix we establish an auxiliary result on the continuous dependence of the eigenvalues 
of the Neumann Laplacian on varying domains, which is useful and convenient for the proofs of Theorem~\ref{th-HSS} and Theorem \ref{thCdV+}.

For our purposes it is sufficient to consider the following geometric setting: 
Let $\Omega\subset\R^2$ be a bounded Lipschitz domain 
and assume that also the subdomains 
$$\Omega_\pm=\Omega\cap\left\{(x,y)\in\R^2: \pm x> 0\right\}$$
are (bounded, nontrivial) Lipschitz domains. Furthermore, we assume that the set 
$$\Gamma =\Omega\cap\left\{(x,y)\in\R^2: x=0\right\}=\partial\Omega^-\cap\partial\Omega^+$$ is a (compact) interval with the endpoints 
$(0,A)$ and $(0,B)$ in $\R^2$, where $A<B$. 
For $a,b\in [A,B]$ fixed such that $a\le b$ we introduce the domain $\Omega_{a,b}$ by 
\begin{equation}\label{omab}
\Omega_{a,b}=\Omega_-\cup\Omega_+ \cup \Gamma_{a,b},
\text{ where }\Gamma_{a,b}=\{0\}\times (a,b)
\end{equation}
(see Figure~\ref{fig5}, left). Note that
$\Omega_{A,B}=\Omega$ and $\Omega_{a,a}=\Omega_-\cup\Omega_+=\Omega\setminus\Gamma$. 
We denote by $Y\subset [A,B]\times [A,B]$ the set of all admissible pairs $\{a,b\}$, that is,
$$Y=\bigl\{\{a,b\} : A\leq a\leq b\leq B\bigr\}.$$

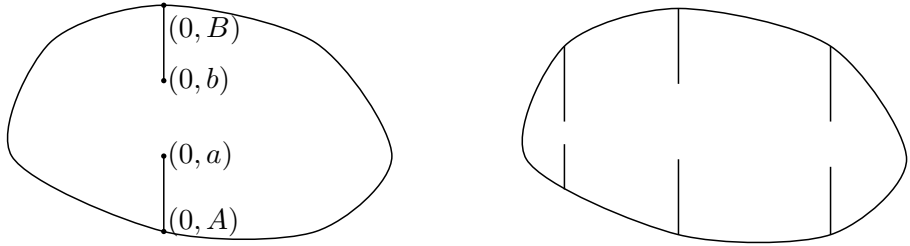
\begin{figure}[h]
\begin{tikzpicture}
\draw [line width=0.2mm, black] plot [smooth cycle] coordinates {(3,0) (2,1.5) (0,2) (-1.5,1.5) (-2,0) (0,-1) (2,-1)};

\draw [line width=0.2mm, black]  (0,2) -- (0,1) ;
\draw [line width=0.2mm, black]  (0,0) -- (0,-1);

\node[circle,fill=black,inner sep=0pt,minimum size=0.7mm ] (a) at (0,-1) {};
\node[circle,fill=black,inner sep=0pt,minimum size=0.7mm ] (a) at (0,0) {};
\node[circle,fill=black,inner sep=0pt,minimum size=0.7mm ] (a) at (0,1) {};
\node[circle,fill=black,inner sep=0pt,minimum size=0.7mm ] (a) at (0,2) {};

\node[text width=10mm] at (0.55,0) {$(0,a)$};
\node[text width=10mm] at (0.55,1) {$(0,b)$};
\node[text width=10mm] at (0.55,-0.87) {$(0,A)$};
\node[text width=10mm] at (0.55,1.68) {$(0,B)$};

\end{tikzpicture}\qquad\qquad
\begin{tikzpicture}
\draw [line width=0.2mm, black] plot [smooth cycle] coordinates {(3,0) (2,1.5) (0,2) (-1.5,1.5) (-2,0) (0,-1) (2,-1)};

\draw [line width=0.2mm, black]  (0,2) -- (0,1) ;
\draw [line width=0.2mm, black]  (0,0) -- (0,-1);

\draw [line width=0.2mm, black]  (-1.5,1.5) -- (-1.5,0.5) ;
\draw [line width=0.2mm, black]  (-1.5,0.2) -- (-1.5,-0.4);

\draw [line width=0.2mm, black]  (2,1.5) -- (2,0.5) ;
\draw [line width=0.2mm, black]  (2,-0.1) -- (2,-1);

\end{tikzpicture}
\caption{Domain $\Omega_{a,b}$ with one wall (left) and $m=3$ walls (right)}\label{fig5}
\end{figure}

Since the domain $\Omega_{a,b}$ in \eqref{omab} has the \textit{cone property} (see, e.g. \cite[Chapter~IV,~4.3]{A75}) it follows from
Rellich's  theorem \cite[Theorem~6.2]{A75} that the embedding $\H^1(\Omega_{a,b})\hookrightarrow \L(\Omega_{a,b})$ is compact. Therefore, 
the spectrum of the Neumann Laplacian $A_{\Omega_{a,b}}$ on $\Omega_{a,b}$ is purely discrete.
We denote by $\left(\lambda_k(\Omega_{a,b})\right)_{k\in\N}$  the sequence of eigenvalues of 
$A_{\Omega_{a,b}}$  numbered in nondecreasing order with multiplicities taken into account.

\begin{theorem}\label{th-contin}
For each $k\in\N$ the function
$\{a,b\}\mapsto \lambda_k(\Omega_{a,b})$
is continuous on $Y$.
\end{theorem}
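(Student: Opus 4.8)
The plan is to prove upper and lower semicontinuity of the map $\{a,b\}\mapsto\lambda_k(\Omega_{a,b})$ separately at an arbitrary fixed point $\{a_0,b_0\}\in Y$. First I would put all the forms on a common footing: regard each $\a_{\Omega_{a,b}}$ as a closed nonnegative form in the fixed Hilbert space $\L(\Omega_-)\oplus\L(\Omega_+)$, acting by $u=(u_-,u_+)\mapsto\|\nabla u_-\|^2_{\L(\Omega_-)}+\|\nabla u_+\|^2_{\L(\Omega_+)}$ on those $u\in\H^1(\Omega_-)\oplus\H^1(\Omega_+)$ whose traces on $\Gamma$ coincide on $\Gamma_{a,b}$; cf.\ \eqref{omab}. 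Since the $\Omega_\pm$ are Lipschitz and $\Gamma\subset\partial\Omega_\pm$, both the Rellich embeddings $\H^1(\Omega_\pm)\hookrightarrow\L(\Omega_\pm)$ and the trace maps $\H^1(\Omega_\pm)\to\L(\Gamma)$ are compact. I would also record the elementary monotonicity: if $\Gamma_{a,b}\subset\Gamma_{a',b'}$ then $\dom(\a_{\Omega_{a',b'}})\subset\dom(\a_{\Omega_{a,b}})$ and the two forms agree on the smaller domain, so $\a_{\Omega_{a,b}}\le\a_{\Omega_{a',b'}}$ and hence, by the min-max principle, $\lambda_k(\Omega_{a,b})\le\lambda_k(\Omega_{a',b'})$; in particular the function is nonincreasing in $a$ and nondecreasing in $b$. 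Monotonicity is not strictly needed below, but it makes an alternative proof via the monotone convergence theorem for forms available.

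For the lower bound let $\{a_n,b_n\}\to\{a_0,b_0\}$ realize $\liminf_n\lambda_k(\Omega_{a_n,b_n})=:\Lambda$, and let $e_1^n,\dots,e_k^n$ be $\L$-orthonormal eigenfunctions of $A_{\Omega_{a_n,b_n}}$ for its $k$ lowest eigenvalues; being eigenfunctions they are automatically $\a_{\Omega_{a_n,b_n}}$-orthogonal, with $\a_{\Omega_{a_n,b_n}}[e_j^n,e_j^n]=\lambda_j(\Omega_{a_n,b_n})\le\lambda_k(\Omega_{a_n,b_n})$. They are bounded in $\H^1(\Omega_-)\oplus\H^1(\Omega_+)$, so after passing to a subsequence $e_j^n\rightharpoonup e_j$ weakly in $\H^1(\Omega_\pm)$ and $e_j^n\to e_j$ in $\L$; thus $e_1,\dots,e_k$ are $\L$-orthonormal and span a $k$-dimensional space $V$. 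Compactness of the trace maps gives $e_{j,\pm}^n\!\restriction_\Gamma\to e_{j,\pm}\!\restriction_\Gamma$ in $\L(\Gamma)$, and since the two traces of $e_j^n$ agree on every compact subinterval of $(a_0,b_0)$ once $n$ is large, they agree on $(a_0,b_0)$; hence $V\subset\dom(\a_{\Omega_{a_0,b_0}})$. Finally, for $u=\sum_j c_j e_j\in V$ and $u^n=\sum_j c_j e_j^n\rightharpoonup u$, weak lower semicontinuity of the Dirichlet integral together with the $\a$-orthogonality of the $e_j^n$ yields $\a_{\Omega_{a_0,b_0}}[u,u]\le\liminf_n\a_{\Omega_{a_n,b_n}}[u^n,u^n]=\liminf_n\sum_j|c_j|^2\lambda_j(\Omega_{a_n,b_n})\le\Lambda\|u\|^2$, and the min-max principle gives $\lambda_k(\Omega_{a_0,b_0})\le\Lambda$.

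For the upper bound, given $\{a_n,b_n\}\to\{a_0,b_0\}$, I would form $k$-dimensional test spaces for $A_{\Omega_{a_n,b_n}}$ out of $\L$-orthonormal eigenfunctions $f_1,\dots,f_k$ of $A_{\Omega_{a_0,b_0}}$, for which $f_{j,-}\!\restriction_\Gamma=f_{j,+}\!\restriction_\Gamma$ on $(a_0,b_0)$. If $\Gamma_{a_n,b_n}\subset\Gamma_{a_0,b_0}$ the $f_j$ are already admissible; otherwise their traces must be made to coincide on a slightly larger subinterval, and this trace surgery near the (at most two) endpoints of $\Gamma_{a_0,b_0}$ is the one genuinely delicate step. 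For fixed $\delta>0$ I would take cut-off functions $\chi_\delta\in C^\infty(\R^2)$ vanishing on the $\delta$-discs about those endpoints, equal to $1$ outside the $\sqrt\delta$-discs, with $0\le\chi_\delta\le1$ and $\|\nabla\chi_\delta\|_{\L(\R^2)}\to0$ as $\delta\to0$ (possible since a point has vanishing $\H^1$-capacity in the plane), and arrange $\chi_\delta f_{j,\pm}\to f_{j,\pm}$ in $\H^1(\Omega_\pm)$; the required bound $\|f_{j,\pm}\nabla\chi_\delta\|_{\L}\to0$ follows from a Hardy inequality with logarithmic weight, and is immediate in our situation because a Neumann eigenfunction behaves like $r^{1/2}$ at a slit tip and is therefore bounded near those endpoints. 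By construction the two traces of $\chi_\delta f_j$ on $\Gamma$ vanish near the endpoints of $\Gamma_{a_0,b_0}$ and equal those of $f_j$ away from them, hence coincide on $(a_0-\delta,b_0+\delta)$; thus $\chi_\delta f_j\in\dom(\a_{\Omega_{a,b}})$ whenever $\Gamma_{a,b}\subset\Gamma_{a_0-\delta,b_0+\delta}$, which holds for all $\{a,b\}$ close to $\{a_0,b_0\}$. A diagonal choice $\delta_n\to0$ then gives $f_j^n:=\chi_{\delta_n}f_j\in\dom(\a_{\Omega_{a_n,b_n}})$ with $f_j^n\to f_j$ in $\H^1$; for $n$ large the span of $f_1^n,\dots,f_k^n$ is $k$-dimensional and the Rayleigh quotient of $\a_{\Omega_{a_n,b_n}}$ on it converges to $\max_{1\le j\le k}\lambda_j(\Omega_{a_0,b_0})=\lambda_k(\Omega_{a_0,b_0})$, so $\limsup_n\lambda_k(\Omega_{a_n,b_n})\le\lambda_k(\Omega_{a_0,b_0})$ by min-max. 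Together with the lower bound this proves continuity on $Y$; the degenerate cases $a_0=b_0$ or $a_0,b_0\in\{A,B\}$ (fewer tip cut-offs) and the several-wall geometry needed for Theorem~\ref{thCdV+} (Figure~\ref{fig5}, right) are handled verbatim. The main obstacle is precisely this upper-bound surgery: one must render the eigenfunctions of $A_{\Omega_{a_0,b_0}}$ admissible for the strictly more restrictive trace constraints of the nearby domains at a cost negligible in $\H^1$, which is what the vanishing planar $\H^1$-capacity of a point provides.
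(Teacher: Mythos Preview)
Your proof is correct and takes a genuinely different route from the paper's. The paper works at the level of resolvents: it invokes an abstract convergence criterion of Iosif'yan--Oleinik--Shamaev (Theorem~\ref{thIOS}) for the compact operators $(A_{\Omega_{a_n,b_n}}+\Id)^{-1}$, and the main work is to verify strong resolvent convergence. That verification uses the weak formulation \eqref{weak_n}, weak $\H^1$-compactness, trace convergence, and---crucially---the density of $\widehat\H^1(\Omega_{a,b})$ (functions vanishing near the slit tips) in $\H^1(\Omega_{a,b})$, which is cited from \cite{RT75} as a capacity fact. You instead argue directly with the min-max principle, proving lower and upper semicontinuity separately: the lower bound via weak compactness of eigenfunctions of the approximating problems, the upper bound by cutting off eigenfunctions of the limit problem near the slit tips so that they become admissible for nearby slit configurations. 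The underlying analytic ingredient is the same---vanishing planar $\H^1$-capacity of a point, which you use explicitly via the cutoffs $\chi_\delta$ rather than via a density citation---but your argument is more self-contained and avoids the black-box abstract theorem entirely. The paper's approach is cleaner to state once the abstract result is available; yours is more elementary and makes the variational mechanism transparent. One small remark: your appeal to the $r^{1/2}$ asymptotics of Neumann eigenfunctions at a crack tip is correct but imports nontrivial Kondratiev-type regularity; the Hardy-with-logarithmic-weight alternative you mention (equivalently, the density of $\widehat\H^1$) suffices for arbitrary $\H^1$ functions and keeps the argument at the Sobolev-space level, which is closer in spirit to what the paper actually uses.
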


\begin{remark}\label{rem-contin}
	Theorem~\ref{th-contin} remains valid for more general domains $\Omega_{a,b}$ obtained
	from $\Omega$ by adding $m>1$ walls in the same way -- see  Figure~\ref{fig5} (right, here $m=3$).
	In this case 
	$a=\{a_1,\dots,b_m\}$, $b=\{b_1,\dots,b_m\}$ with
	\begin{gather}\label{aabb}
	A_j\le a_j\leq b_j\leq B_j,\ j=1,\dots,m
	\end{gather}
	and $\{a,b\}\mapsto  \lambda_k(\Omega_{a,b})$ is continuous on 
	$\{\{a,b\}\in\R^{2m}: \eqref{aabb}\text{ holds}\}$.
\end{remark}

For the proof of Theorem~\ref{th-contin} we shall first recall a particular case of a more general abstract result established in \cite{IOS89}, 
which is formulated and proved for operators in \textit{varying} Hilbert spaces.

\begin{theorem}[Iosif'yan-Oleinik-Shamaev, 1989]\label{thIOS}
Let $B_n$, $n\in\N$, and $B$ be nonnegative compact operators in a Hilbert space $\HS$.
We denote by $(\mu_k(B_n))_{k\in\N}$ and $(\mu_k(B))_{k\in\N}$ the sequences of the eigenvalues of 
$B_n$ and $B$, respectively, numbered in nonincreasing order with multiplicities taken into account.
Assume that the following conditions hold:
\begin{itemize}
\item[{\rm (i)}] $\sup_{n}\|B_n\|<\infty$;

\item[{\rm (ii)}] $\forall f\in\HS$: $B_n f\to Bf$ as $n\to \infty$;

\item[{\rm (iii)}] for any bounded sequence $(f_n)_{n\in\N}$ in $\HS$ there exists $u\in\HS$ 
and a subsequence $(n_k)_{k\in\N}$ such that
$B_{n_k}f_{n_k}\to u$ in $\HS$ as $k\to\infty$.

\end{itemize}
Then for each $k\in\N$
\begin{gather}\label{mumu}
\mu_k(B_n)\to \mu_k(B)\text{ as }n\to\infty.
\end{gather}
\end{theorem}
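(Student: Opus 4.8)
The plan is to derive \eqref{mumu} from the min-max characterization of the eigenvalues of nonnegative (hence self-adjoint) compact operators, proving the two inequalities $\liminf_n\mu_k(B_n)\ge\mu_k(B)$ and $\limsup_n\mu_k(B_n)\le\mu_k(B)$ separately. Recall that for a nonnegative compact operator $C$ in $\HS$ one has, for every $k\in\N$,
\begin{equation*}
\mu_k(C)=\max_{\substack{L\subseteq\HS\\ \dim L=k}}\ \min_{\substack{f\in L\\ \|f\|=1}}(Cf,f),
\end{equation*}
the maximum being attained on the span of $k$ orthonormal eigenvectors associated with the $k$ largest eigenvalues.

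First I would establish the lower bound. Let $e_1,\dots,e_k$ be orthonormal eigenvectors of $B$ with $Be_j=\mu_j(B)e_j$ and put $L=\mathrm{span}\{e_1,\dots,e_k\}$, so that $\min_{f\in L,\ \|f\|=1}(Bf,f)=\mu_k(B)$. Since $L$ is finite dimensional, the strong convergence in (ii) is uniform on the unit sphere of $L$ (the $k\times k$ matrices $((B_ne_i,e_j))_{i,j}$ converge entrywise to $((Be_i,e_j))_{i,j}$), hence $\min_{f\in L,\ \|f\|=1}(B_nf,f)\to\mu_k(B)$; combined with the trivial inequality $\mu_k(B_n)\ge\min_{f\in L,\ \|f\|=1}(B_nf,f)$ this gives $\liminf_n\mu_k(B_n)\ge\mu_k(B)$. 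Only hypothesis (ii) is used here.

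The core of the argument, and the step I expect to be the main obstacle, is the reverse inequality $\limsup_n\mu_k(B_n)\le\mu_k(B)$. I would argue by contradiction. By (i) the sequence $(\mu_k(B_n))_n$ is bounded, so after passing to a subsequence we may assume $\mu_j(B_n)\to\tilde\mu_j$ for all $j=1,\dots,k$, with $\tilde\mu_k>\mu_k(B)\ge 0$; in particular $\tilde\mu_1\ge\dots\ge\tilde\mu_k>0$. Fix orthonormal eigenvectors $e_j^n$ of $B_n$ with $B_ne_j^n=\mu_j(B_n)e_j^n$. Applying (iii) once for each $j$ (passing to a further subsequence each time, which is harmless since $k$ is fixed) we may assume $B_ne_j^n\to u_j$ in $\HS$; since $\mu_j(B_n)\ge\mu_k(B_n)\to\tilde\mu_k>0$, the factor $\mu_j(B_n)$ is bounded away from $0$ for large $n$, and therefore $e_j^n=\mu_j(B_n)^{-1}B_ne_j^n\to\tilde\mu_j^{-1}u_j=:e_j$ in $\HS$. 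Norm convergence preserves norms and inner products, so $\{e_1,\dots,e_k\}$ is orthonormal. It then remains to identify each $e_j$ as an eigenvector of $B$ with eigenvalue $\tilde\mu_j$: writing $(B_n-B)e_j^n=(B_n-B)e_j+(B_n-B)(e_j^n-e_j)$, the first summand tends to $0$ by (ii) while the second is bounded in norm by $(\sup_n\|B_n\|+\|B\|)\,\|e_j^n-e_j\|\to 0$ by (i); hence $Be_j^n=B_ne_j^n-(B_n-B)e_j^n\to u_j=\tilde\mu_je_j$, and since also $Be_j^n\to Be_j$ by boundedness of $B$, we get $Be_j=\tilde\mu_je_j$. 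Thus $B$ has $k$ orthonormal eigenvectors whose eigenvalues all exceed $\mu_k(B)$, which is impossible since, by the numbering convention, the span of all eigenvectors of $B$ with eigenvalue larger than $\mu_k(B)$ has dimension at most $k-1$. This contradiction yields $\limsup_n\mu_k(B_n)\le\mu_k(B)$, and together with the lower bound it proves \eqref{mumu}.
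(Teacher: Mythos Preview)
The paper does not give its own proof of this statement; it is merely quoted from \cite{IOS89} as a tool for proving Theorem~\ref{th-contin}. Your argument via the min--max characterization is the standard route and is essentially what one finds in the original source.

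One caveat is worth recording. In the upper-bound step you first pass to a subsequence along which the $\mu_j(B_n)$ converge, and \emph{then} invoke (iii) for the eigenvector sequences on this subsequence. Condition (iii) as literally written is a property of the full sequence $(B_n)_{n\in\N}$ and is not inherited by subsequences; in fact the theorem is formally false as stated. For instance, take $\HS=\ell^2(\N)$, $B_{2m-1}=0$, $B_{2m}=\langle\,\cdot\,,e_m\rangle e_m$: then (i) and (ii) hold with $B=0$, and (iii) is trivially satisfied because the odd indices always yield the zero subsequence, yet $\mu_1(B_n)$ alternates between $0$ and $1$. The intended hypothesis --- and what the paper actually verifies when it checks (iii) in the proof of Theorem~\ref{th-contin} --- is that $(B_nf_n)$ is \emph{relatively compact} for every bounded $(f_n)$, i.e.\ every subsequence has a further convergent subsequence. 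With this reading your successive extractions are legitimate and the proof is complete; the lower-bound half, which uses only (ii), is unaffected.
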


\begin{proof}[Proof of Theorem~\ref{th-contin}] 
Fix some $\{a,b\}\in Y$ and consider an arbitrary  sequence $\{a_n,b_n\}\in Y$, $n\in\N$, such that 
$\lim_{n\to\infty}a_n=a$ and $\lim_{n\to\infty}b_n=b$.
We have to show that for each $k\in\N$
\begin{gather}
\label{lambdalambda}
\lambda_k(\Omega_{a_n,b_n})\to \lambda_k(\Omega_{a,b})\text{ as }n\to\infty.
\end{gather}
The strategy is  to apply Theorem~\ref{thIOS} to the resolvents of the Neumann Laplacians 
$A_{\Omega_{a_n,b_n}}$ and $A_{\Omega_{a,b}}$. 
More precisely, 
we consider the operators
\begin{gather*}
B_n=(A_{\Omega_{a_n,b_n}}+\Id)^{-1}\quad\text{and}\quad B=(A_{\Omega_{a,b}}+\Id)^{-1},
\end{gather*}
which are bounded operators acting in $\HS=\L(\Omega)=\L(\Omega_{a,b})$.
We show below that these operators satisfy the conditions (i)-(iii) in Theorem~\ref{thIOS}. 
Then it follows that \eqref{mumu} holds for each $k\in\N$ and from 
\begin{gather*}
\mu_k(B_n)=(\lambda_k(\Omega_{a_n,b_n})+1)^{-1}\quad\text{and}\quad 
\mu_k(B)=(\lambda_k(\Omega_{a,b})+1)^{-1}
\end{gather*}
we conclude \eqref{lambdalambda}.\\

(i) This condition holds since $$\|B_n\|={1\over \mathrm{dist}(-1,\,\sigma(A_{\Omega_{a_n,b_n}}))}=1.$$

(ii) In order to check condition (ii) in Theorem~\ref{thIOS} let $f\in \L(\Omega)$ and set 
$u_n=B_n f$, $n\in\N$. For $\phi\in \H^1(\Omega_{a_n,b_n})$
it follows from the definition of the Neumann Laplacian $A_{\Omega_{a_n,b_n}}$ that $u_n$ satisfies
\begin{equation}\label{weak_n}
(\nabla u_n,\nabla\phi)_{\L(\Omega_{a_n,b_n})}+
(u_n,\phi)_{\L(\Omega_{a_n,b_n})}= \bigl((A_{\Omega_{a_n,b_n}}+\Id)u_n,\phi\bigr)_{\L(\Omega_{a_n,b_n})}
=(f,\phi)_{\L(\Omega_{a_n,b_n})}.
\end{equation}
In particular, using \eqref{weak_n} for $\phi=u_n$ we get
\begin{equation*}
\begin{split}
\|u_n\|^2_{\H^1(\Omega_{a_n,b_n})}&=
\|\nabla u_n\|^2_{\L(\Omega_{a_n,b_n})}+\|u_n\|^2_{\L(\Omega_{a_n,b_n})}=(f,u_n)_{\L(\Omega_{a_n,b_n})}\\&\leq \|f \|_{\L(\Omega_{a_n,b_n})}\|u_n\|_{\L (\Omega_{a_n,b_n})}
  \leq\|f \|_{\L(\Omega_{a_n,b_n})}\|u_n\|_{\H^1(\Omega_{a_n,b_n})},
\end{split}
\end{equation*}
and therefore
\begin{gather}\label{apriori}
\|u_n\|_{\H^1(\Omega_{a_n,b_n})}\leq \|f \|_{\L(\Omega_{a_n,b_n})}.
\end{gather}

We set $u_n^\pm = u_n\!\!\restriction_{\Omega_\pm}$. Below  we shall use the same $\pm$-superscript notation for restrictions of other functions
onto $\Omega_\pm$. It follows from \eqref{apriori}
that  $(u_n^\pm)_{n\in\N}$ is a bounded sequence in 
$\H^1(\Omega_\pm)$ and hence there exist $u^\pm\in\H^1(\Omega_\pm)$
and a subsequence $n_k\to\infty$ such that
\begin{gather}
\label{H1}
u_{n_k}^\pm\rightharpoonup u^\pm\text{ in }\H^1(\Omega_\pm)
\end{gather}
(as usual the notation $\rightharpoonup$ is used for the \textit{weak} convergence).
With the help of Rellich's  theorem we conclude from \eqref{H1} that
\begin{gather}
\label{H1-}
u_{n_k}^\pm\to u^\pm\text{ in }\H^{1-\kappa}(\Omega_\pm),\quad \kappa\in (0,1].
\end{gather}
Finally, well-known mapping properties of the trace operator on $\H^1(\Omega_\pm)$ (see, e.g., \cite[Theorem~3.37]{McL00}) together with \eqref{H1-} lead to
\begin{gather}
\label{traces}
\gamma_{\Gamma}^\pm u_{n_k}^\pm \to \gamma_{\Gamma}^\pm u^\pm\text{ in }\L(\Gamma)
\end{gather}
as $n_k\to\infty$,
where $\gamma^\pm_{\Gamma}u^\pm$ stands for the restriction of the trace of the function $u^\pm\in\H^1(\Omega_\pm)$ onto $\Gamma$.

Next we introduce the set of functions 
\begin{multline*}
\widehat\H^1(\Omega_{a,b})=\bigl\{u\in\H^1(\Omega_{a,b}):\ \exists\delta=\delta(u)>0\text{ such that}\\ 
u=0\text{ in  } \delta\text{-neighborhoods of }(0,a)\text{ and }(0,b)\bigr\}.
\end{multline*}
It is known that $\widehat\H^1(\Omega_{a,b})$ is dense in $\H^1(\Omega_{a,b})$ 
(this is due to the fact that the capacity of the set $\{(0,a),\,(0,b)\}$ is zero; we refer to \cite{RT75} for more details). 
Now let $\phi\in \widehat\H^1(\Omega_{a,b})$. It is clear that for $n_k$ sufficiently large we also have 
$\phi\in \H^1(\Omega_{a_{n_k},b_{n_k}})$ and hence
\eqref{weak_n} is valid. The identity  \eqref{weak_n} written componentwise reads as 
\begin{equation*}
\begin{split}
(\nabla u_{n_k}^- ,\nabla\phi^-)_{\L(\Omega_-)}+
(\nabla u_{n_k}^+ ,\nabla&\phi^+)_{\L(\Omega_+)}+
(u_{n_k}^-,\phi^-)_{\L(\Omega_-)}+
(u_{n_k}^+,\phi^+)_{\L(\Omega_+)}\\
&=
(f^-,\phi^-)_{\L(\Omega_-)}+
(f^+,\phi^+)_{\L(\Omega_+)},
\end{split}
\end{equation*}
and passing to the limit (we have weak convergence in $\H^1(\Omega_\pm)$ by \eqref{H1}) as  $n_k\to\infty$
 we get 
\begin{equation}\label{weak}
\begin{split}
(\nabla u^- ,\nabla\phi^-)_{\L(\Omega_-)}+
(\nabla u^+ ,\nabla&\phi^+)_{\L(\Omega_+)}+
(u^-,\phi^-)_{\L(\Omega_-)}+
(u^+,\phi^+)_{\L(\Omega_+)}\\
&=
(f^-,\phi^-)_{\L(\Omega_-)}+
(f^+,\phi^+)_{\L(\Omega_+)}.
\end{split}
\end{equation}
Let us denote
$$
u(x)=
\begin{cases}
u^-(x),& x\in\Omega_-,
\\
u^+(x),& x\in\Omega_+.
\end{cases}
$$
Obviously $u\in\L(\Omega)$. Using \eqref{H1-} with $\kappa=1$ we obtain
\begin{gather}\label{ii}
u_{n_k}\to u\text{ in }\L(\Omega).
\end{gather}
Since $u_{n_k}\in\H^1(\Omega_{a_{n_k},b_{n_k}})$ it is clear that
$$\gamma^-_{\Gamma_{a_{n_k},b_{n_k}}} u^-_{n_k}=\gamma_{\Gamma_{a_{n_k},b_{n_k}}}^+ u_{n_k}^+,$$ 
where $\gamma^\pm_{\Gamma_{a_{n_k},b_{n_k}}}$ is the restriction of the trace onto $\Gamma_{a_{n_k},b_{n_k}}=\{0\}\times (a_{n_k},b_{n_k})$.
Therefore, \eqref{traces} implies that $\gamma^-_{\Gamma_{a',b'}} u^-=\gamma^+_{\Gamma_{a',b'}} u^+$ for \textit{any} interval $(a',b')\subset (a,b)$ and, consequently,   
$$\gamma^-_{\Gamma_{a ,b }} u^-=\gamma^+_{\Gamma_{a ,b }} u^+.$$ 
As $u^\pm\in \H^1(\Omega_\pm)$
this implies
$u\in \H^1(\Omega_{a,b})$ and \eqref{weak} can be written in the form 
\begin{gather}
\label{weak+}
(\nabla u ,\nabla\phi)_{\L(\Omega_{a,b})}+
(u,\phi)_{\L(\Omega_{a,b})}=
(f,\phi)_{\L(\Omega_{a,b})}.
\end{gather}
Since $\widehat\H^1(\Omega_{a,b})$ is dense in $ \H^1(\Omega_{a,b})$ this equality holds
for any $\phi\in \H^1(\Omega_{a,b})$. It is easy to see that 
\eqref{weak+} is equivalent to $u=Bf$. This also shows that the limit function $u$ is independent of the subsequence $n_k$ and hence we conclude that
\eqref{ii} holds for any subsequence $n_k$. Thus,
$$B_nf = u_n\to u = Bf\text{ in }\L(\Omega)$$
as $n\to\infty$. We have verified condition (ii) in Theorem~\ref{thIOS}.

(iii) To check this condition let $(f_n)_{n\in\N}$ be a bounded sequence in $\L(\Omega)$. 
The same arguments as in the proof of (ii) (cf.~\eqref{apriori}) show that the sequence 
$(B_n f_n)_{n\in\N}$ is bounded in $\H^1(\Omega\setminus\Gamma)$, and hence contains a weakly convergent subsequence in $\H^1(\Omega\setminus\Gamma)$.
Since the embedding 
$$H^1(\Omega\setminus\Gamma)\hookrightarrow  \L(\Omega\setminus\Gamma)=\L(\Omega)$$
is compact (again we use Rellich's embedding theorem) we conclude that there is a strongly convergent subsequence in $\L(\Omega)$, that is, 
condition (iii) in Theorem~\ref{thIOS} is satisfied.  
\end{proof}

\begin{remark}
	Besides the continuity of the function $\{a,b\}\mapsto \lambda_k(\Omega_{a,b})$
	one can also conclude that it decreases (\textit{resp.}, increases) monotonically  with respect to $a$ (\textit{resp.},  with respect to $b$). This follows easily from the min-max principle  (see, e.g., \cite[Section~4.5]{D95}).
	Note, that, in general, when one perturbs a fixed domain $\Omega$ by removing a subset $S_a$  ($a\in\R$ is a parameter)
	the monotonicity of the eigenvalues of the Neumann Laplacian in $\Omega_a:=\Omega\setminus S_a$ does not follow from the 
	the  monotonicity of the underlying domains with respect to $a$, i.e., even if $\Omega_a\subset \Omega_{\widetilde a}$,
	it does not mean that $\lambda(\Omega_a)\geq \lambda(\Omega_{\widetilde a})$ (see \cite[Section~2.3]{MNP85} for more details). 
	This is in contrast to Dirichlet Laplacian, where the monotonicity is always present -- see, e.g., \cite{RT75,GZ94,Oz81,MNP85} 
	for the properties of Dirichlet eigenvalues in so perturbed domains. However, in our configuration monotonicity nevertheless holds 
	for Neumann eigenvalues. 
	This is due to a special structure of the removed set having the form of two walls with zero thickness. 
\end{remark}

\section{Convergence results for monotone sequences of quadratic forms}\label{appb}

We recall a well-known convergence result for a sequence of
monotonically increasing quadratic forms from \cite{Si78} which is used in the proof of Theorem~\ref{th-BK}.

Consider a family $\{\a_{q}\}_{q>0}$ of densely defined closed nonnegative sesquilinear forms in a Hilbert space $\HS$. For simplicity we assume that 
the  domain of  $\a_q $ is the same for all $q$, and we use the notation $\dom(\a_q)=\HS^1$. 
Let $\mathcal{A}_q$ be the nonnegative self-adjoint operator associated with the form $\a_q$ via the first representation theorem.
Now assume, in addition, that the family $\{\a_{q}\}_{q>0}$ of forms increases monotonically as $q$ decreases, i.e.
for any $0<q<\widetilde q<\infty$ one has
\begin{gather}
\label{monot-forms}
\a_{q}[u,u]\geq \a_{\widetilde q}[u,u],\quad u\in \HS^1.
\end{gather}
We define the limit form $\a_0$ as follows:
$$\dom(\a_0)=\left\{u\in\HS^1:\ \sup\limits_{q>0}\a_{q}[u,u]<\infty\right\},\ \a_0[u,v]=\lim\limits_{q\to 0}\a_{q}[u,v].$$
One verifies that $\a_0$ is a well-defined nonnegative symmetric sesquilinear form (which is not necessarily densely defined) 
and, in fact, by \cite{Si78} the limit form $\a_0$ is closed.
Let us now assume that $\dom(\a_0)$ is dense in $\HS$, so that one can associate a 
nonnegative self-adjoint operator $\mathcal{A}_0$ with $\a_0$ via the first representation theorem.\footnote{We wish to mention here that in the situation where
the limit form $\a_0$ is not densely defined one associates a self-adjoint relation (multivalued operator) via the corresponding generalized first representation theorem
for nondensely defined closed nonnegative forms; see \cite{BHSW10} for more details and related convergence results.}
According to \cite{Si78} one then has convergence of the corresponding nonnegative self-adjoint operators in the strong resolvent sense (see also \cite[Theorem 4.2]{BHSW10}):

\begin{theorem}[Simon, 1978]
For each $f\in\HS$ one has
\begin{gather}\label{strong} 
\|(\mathcal{A}_q+\Id)^{-1}f- (\mathcal{A}_0+\Id)^{-1}f\|\to 0\text{ as }q\to 0.
\end{gather}
\end{theorem}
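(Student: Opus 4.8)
The plan is to reduce the claim to identifying the strong limit of the monotone family of resolvents, and then to carry out that identification by a variational ($\Gamma$-convergence type) argument. Since $0<q<\widetilde q$ implies $\a_q\geq\a_{\widetilde q}$ in the sense of forms, the associated operators satisfy $\mathcal{A}_q\geq\mathcal{A}_{\widetilde q}$, hence $(\mathcal{A}_q+\Id)^{-1}\leq(\mathcal{A}_{\widetilde q}+\Id)^{-1}$; thus the uniformly bounded nonnegative self-adjoint operators $(\mathcal{A}_q+\Id)^{-1}$ decrease as $q\to 0$ and, by a standard monotone convergence argument for self-adjoint operators (Vigier's theorem), converge strongly to a bounded nonnegative self-adjoint operator $R$. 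Fixing $f\in\HS$ and setting $u_q:=(\mathcal{A}_q+\Id)^{-1}f$ and $u_0:=Rf$, we have $u_q\to u_0$ in $\HS$, and it remains to prove $u_0=(\mathcal{A}_0+\Id)^{-1}f$, which is exactly \eqref{strong}.

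Next I would record the relevant a priori bound and a lower semicontinuity property of $u_0$. Testing the identity $\a_q[u_q,v]+(u_q,v)=(f,v)$, valid for all $v\in\HS^1$, with $v=u_q$ gives $\a_q[u_q,u_q]+\|u_q\|^2=\mathrm{Re}(f,u_q)\leq\|f\|^2$, so in particular $\|u_q\|\leq\|f\|$ and $\a_q[u_q,u_q]\leq\|f\|^2$ for all $q>0$. Fix $q_0>0$; for $q<q_0$ monotonicity yields $\a_{q_0}[u_q,u_q]\leq\a_q[u_q,u_q]\leq\|f\|^2$, so $(u_q)_{q<q_0}$ is bounded in the Hilbert space $(\HS^1,\a_{q_0}[\cdot,\cdot]+(\cdot,\cdot))$, which is complete since $\a_{q_0}$ is closed. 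Any weak limit point in this space is also a weak limit point in $\HS$, hence equals the strong limit $u_0$; therefore $u_0\in\HS^1$ and, by weak lower semicontinuity of the form norm (and $\|u_q\|^2\to\|u_0\|^2$), $\a_{q_0}[u_0,u_0]\leq\liminf_{q\to0}\a_q[u_q,u_q]$. Taking the supremum over $q_0>0$ shows $u_0\in\dom(\a_0)$ and $\a_0[u_0,u_0]\leq\liminf_{q\to0}\a_q[u_q,u_q]$.

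Finally I would identify $u_0$ via the quadratic functionals $J_q(v):=\a_q[v,v]+\|v\|^2-2\mathrm{Re}(f,v)$. Each $J_q$ is strictly convex and coercive on the form Hilbert space of $\a_q$, so its unique minimizer over $\HS^1$ is $u_q$, with $\inf_{\HS^1}J_q=J_q(u_q)=-((\mathcal{A}_q+\Id)^{-1}f,f)\in[-\|f\|^2,0]$; likewise, since $\a_0$ is closed and densely defined (by the hypotheses and \cite{Si78}), the unique minimizer of $J_0$ over $\dom(\a_0)$ is $(\mathcal{A}_0+\Id)^{-1}f$. Because $\a_q\nearrow\a_0$, one has $\inf_{\HS^1}J_q\leq\inf_{\dom(\a_0)}J_0$ for every $q$; on the other hand, combining $u_q\to u_0$ in $\HS$ with the liminf inequality of the previous paragraph gives $J_0(u_0)\leq\liminf_{q\to0}J_q(u_q)=\lim_{q\to0}\inf_{\HS^1}J_q$, the last equality holding because $q\mapsto\inf_{\HS^1}J_q$ is monotone. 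Hence $J_0(u_0)=\inf_{\dom(\a_0)}J_0$, so $u_0$ is the minimizer of $J_0$, i.e. $u_0=(\mathcal{A}_0+\Id)^{-1}f$; therefore $R=(\mathcal{A}_0+\Id)^{-1}$ and \eqref{strong} follows.

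The main obstacle is exactly this last identification: since the forms \emph{increase} as $q\to 0$, one cannot pass to the limit termwise in $\a_q[u_q,v]$, and it is a priori unclear that the $\HS$-limit $u_0$ even lies in $\dom(\a_0)$. The variational reformulation turns this into a $\Gamma$-convergence statement for the $J_q$, in which the trivial inequality $\inf J_q\leq\inf J_0$ from monotonicity and the recovery inequality $J_0(u_0)\leq\liminf J_q(u_q)$ from weak lower semicontinuity together pin down the limit. (Alternatively one could invoke the abstract monotone convergence theorem for forms in \cite{Si78} directly, but the argument above is self-contained modulo the closedness of $\a_0$.)
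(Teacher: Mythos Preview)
The paper does not actually prove this theorem: it is stated in Appendix~B as a result due to Simon and simply referenced to \cite{Si78} (and \cite{BHSW10}) without argument. So there is no ``paper's own proof'' to compare against.

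Your proof is correct and self-contained. The structure is the natural $\Gamma$-convergence approach: monotonicity of the resolvents gives strong convergence to some limit $R$ via Vigier's theorem; the uniform bound $\a_q[u_q,u_q]\leq\|f\|^2$ together with weak compactness in each fixed form space $(\HS^1,\a_{q_0}[\cdot,\cdot]+(\cdot,\cdot))$ yields $u_0\in\dom(\a_0)$ and the liminf inequality $\a_0[u_0,u_0]\leq\liminf_{q\to 0}\a_q[u_q,u_q]$; and the variational sandwich $J_0(u_0)\leq\lim_{q\to 0}\inf_{\HS^1}J_q\leq\inf_{\dom(\a_0)}J_0\leq J_0(u_0)$ identifies $u_0$ as the minimizer of $J_0$, i.e.\ $(\mathcal{A}_0+\Id)^{-1}f$. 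One small remark: you use that $J_q(u_q)=-(f,u_q)$ actually converges (not just has a liminf), which is immediate from $u_q\to u_0$ in $\HS$; this is what lets you close the sandwich cleanly.
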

 
Now let us assume, in addition, that the spectra of the self-adjoint operators $\mathcal{A}_q$ and $\mathcal{A}_0$ are purely discrete.
We write $(\lambda_k(\mathcal{A}_q))_{k\in\N}$ and $(\lambda_k(\mathcal{A}_0))_{k\in\N}$
for the eigenvalues of these operators counted with multiplicities and ordered
as nondecreasing sequences. In this case one can conclude the following spectral convergence:

\begin{theorem}\label{th-s+}
For each $k\in\N$ one has
\begin{gather}\label{ev-conv}
\lambda_k(\mathcal{A}_q)\to 
\lambda_k(\mathcal{A}_0)\text{ as }q\to 0.
\end{gather}
\end{theorem}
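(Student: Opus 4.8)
The plan is to deduce Theorem~\ref{th-s+} from the abstract spectral convergence result Theorem~\ref{thIOS}, applied to the resolvents $B_q:=(\mathcal{A}_q+\Id)^{-1}$ and $B:=(\mathcal{A}_0+\Id)^{-1}$. Since Theorem~\ref{thIOS} is stated for sequences, I would fix an arbitrary sequence $q_n\to 0$, set $B_n:=B_{q_n}$, check the hypotheses (i)--(iii) of Theorem~\ref{thIOS} for this sequence, and conclude $\mu_k(B_n)\to\mu_k(B)$ for every $k\in\N$. Because $\mu_k(B_{q_n})=(\lambda_k(\mathcal{A}_{q_n})+1)^{-1}$ and $\mu_k(B)=(\lambda_k(\mathcal{A}_0)+1)^{-1}$, and since $B=(\mathcal{A}_0+\Id)^{-1}$ is injective so that $\mu_k(B)>0$, the continuity of $t\mapsto t^{-1}-1$ on $(0,\infty)$ yields $\lambda_k(\mathcal{A}_{q_n})\to\lambda_k(\mathcal{A}_0)$; as the sequence $q_n\to 0$ was arbitrary, \eqref{ev-conv} follows.

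Conditions (i) and (ii) are immediate. Since $\mathcal{A}_q\ge 0$ one has $\|B_q\|\le 1$, which gives (i). The pointwise convergence $B_qf\to Bf$ for every $f\in\HS$ required in (ii) is precisely the strong resolvent convergence \eqref{strong} provided by Simon's theorem, whose hypotheses are in force here. The substantial point is condition (iii), and this is where the monotonicity \eqref{monot-forms} of the forms is used in an essential way. Given a bounded sequence $(g_n)$ in $\HS$, set $v_n:=B_{q_n}g_n\in\dom(\mathcal{A}_{q_n})$; testing the identity $(\mathcal{A}_{q_n}+\Id)v_n=g_n$ against $v_n$ gives $\a_{q_n}[v_n,v_n]+\|v_n\|^2=(g_n,v_n)$, whence $\|v_n\|$ and $\a_{q_n}[v_n,v_n]$ are bounded by a constant $M$ independent of $n$. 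Fix any $q_*>0$; then $q_n<q_*$ for all but finitely many $n$, and for those $n$ the monotonicity \eqref{monot-forms} gives $\a_{q_*}[v_n,v_n]\le\a_{q_n}[v_n,v_n]\le M$, so $(v_n)$ is bounded in $\HS^1$ when the latter is equipped with the form norm of $\a_{q_*}$. Since by assumption the spectrum of $\mathcal{A}_{q_*}$ is purely discrete, the embedding of this form domain into $\HS$ is compact, so $(v_n)$ has a subsequence converging in $\HS$, which is exactly (iii).

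The main obstacle is precisely condition (iii): strong resolvent convergence of compact operators alone does not force eigenvalue convergence (eigenvalues may escape in the limit), and without monotonicity there would be no way to bound the varying form norms $\a_{q_n}[v_n,v_n]$ uniformly by a single fixed closed form with compact form embedding. Monotonicity resolves this by sandwiching all the relevant forms below a fixed one whose operator has compact resolvent. As a backup (avoiding Theorem~\ref{thIOS}) I would argue directly with the min-max principle: monotonicity gives that $q\mapsto\lambda_k(\mathcal{A}_q)$ is nondecreasing as $q\downarrow 0$ and, since $\a_q\le\a_0$ as forms with $\dom(\a_0)\subset\HS^1$, bounded above by $\lambda_k(\mathcal{A}_0)$; hence $L_k:=\lim_{q\to 0}\lambda_k(\mathcal{A}_q)$ exists and $L_k\le\lambda_k(\mathcal{A}_0)$. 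For the reverse inequality one takes orthonormal eigenfunctions $e_1^{q_n},\dots,e_k^{q_n}$ of $\mathcal{A}_{q_n}$ with eigenvalues $\le L_k$, uses the bound $\a_q[e_j^{q_n},e_j^{q_n}]\le\lambda_k(\mathcal{A}_{q_n})\le L_k$ valid whenever $q>q_n$ together with the compact embedding to extract $\HS$-limits $e_1,\dots,e_k$, shows via weak lower semicontinuity of each $\a_q$ and the identity $\a_0[u,u]=\sup_{q>0}\a_q[u,u]$ that the span of the $e_j$ lies in $\dom(\a_0)$ and that $\a_0[u,u]\le L_k$ for every unit $u$ in this span, and finally invokes min-max to obtain $\lambda_k(\mathcal{A}_0)\le L_k$. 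Either route yields $L_k=\lambda_k(\mathcal{A}_0)$, which is \eqref{ev-conv}.
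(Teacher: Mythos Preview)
Your proof is correct, but it takes a genuinely different route from the paper's. The paper argues as follows: by \eqref{monot-forms} one has the operator inequality $(\mathcal{A}_q+\Id)^{-1}\le(\mathcal{A}_{\widetilde q}+\Id)^{-1}$ for $0<q<\widetilde q$, and since these resolvents are compact and converge strongly by \eqref{strong}, Kato's monotone convergence theorem \cite[Theorem VIII-3.5]{K66} upgrades this to norm resolvent convergence, from which eigenvalue convergence is immediate. You instead invoke Theorem~\ref{thIOS} (which the paper uses only in Appendix~\ref{appa} for the domain-perturbation problem, not here), and you exploit the monotonicity of the forms differently: rather than deducing monotonicity of the resolvents, you use it to trap all the $v_n$ in a single fixed form domain $(\HS^1,\a_{q_*})$ whose embedding into $\HS$ is compact, thereby verifying condition~(iii). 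Your argument is more self-contained in that it does not appeal to Kato's result, and it makes the role of the compact form embedding explicit; the paper's argument is shorter and more conceptual, packaging everything into the single passage from strong to norm convergence. Your backup min-max argument is also sound and is the most elementary of the three, though one should note in passing that the strong $\HS$-limits $e_1,\dots,e_k$ remain orthonormal (hence span a $k$-dimensional subspace) because the inner products are preserved under strong limits.
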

 
\begin{proof}
The discreteness of the spectra of $\mathcal{A}_q$ and $\mathcal{A}_0$ is equivalent to the compactness of the resolvents
$(\mathcal{A}_q+\Id)^{-1}$ and $(\mathcal{A}_0+\Id)^{-1}$. Moreover
\eqref{monot-forms} implies (cf.~\cite[Proposition~1.1]{Si78})
$$(\mathcal{A}_q+\Id)^{-1}\leq (\mathcal{A}_{\widetilde q}+\Id)^{-1}$$
provided $0<q<\widetilde q<\infty$. Then by \cite[Theorem VIII-3.5]{K66} the strong convergence in 
\eqref{strong} becomes even convergence in the operator norm, that is, 
\begin{gather}\label{norm:res:conv} 
\|(\mathcal{A}_q+\Id)^{-1}- (\mathcal{A}_0+\Id)^{-1}\|\to 0\text{ as }q\to 0.
\end{gather}
It is well-known (see, e.g., \cite[Corollary~A.15]{P06})  that the norm resolvent convergence
\eqref{norm:res:conv} implies the convergence
of the eigenvalues, i.e. \eqref{ev-conv} holds.
\end{proof}

\end{document}